\newtheorem{thm}{Theorem}
\newtheorem*{thm*}{Theorem}
\newtheorem{cor}{Corollary}
\newtheorem{lem}{Lemma}
\newtheorem{prop}{Proposition}
\theoremstyle{definition}
\newtheorem{defn}{Definition}
\theoremstyle{remark}
\newtheorem{rem}{Remark}
\theoremstyle{Hypothesis}
\DeclareMathOperator{\loc}{loc}
\DeclareMathOperator{\supp}{supp}
\numberwithin{equation}{section} \numberwithin{lem}{section}
\numberwithin{thm}{section} \numberwithin{prop}{section}
\numberwithin{cor}{section} \numberwithin{rem}{section}\numberwithin{hyp}{section}
\begin{document}

\begin{frontmatter}

\title{Cauchy problems for Keller-Segel type time-space fractional diffusion equation}

%% or include affiliations in footnotes:
\author[mysecondaryaddress]{Lei Li}
\ead{leili@math.duke.edu}
\author[mymainaddress,mysecondaryaddress]{Jian-Guo Liu}
\ead{jliu@phy.duke.edu}
\author[mysecondaryaddress1]{Lizhen Wang\corref{mycorrespondingauthor}}

\cortext[mycorrespondingauthor]{Corresponding author}
\ead{wanglz123@hotmail.com}

%\fntext[myfootnote0]{Partially supported by KI-Net NSF RNMS 11-07444 and NSF DMS grant No. 1514826.}
%\fntext[myfootnote]{Partially supported by NSFC (Grant No. 11771352, 11631007, 11571279, 11371293).}
\address[mymainaddress]{Department of Physics,
Duke University, Durham, NC, 27708, USA}
\address[mysecondaryaddress]{Department of Mathematics,
Duke University, Durham, NC, 27708, USA}
\address[mysecondaryaddress1]{Center for Nonlinear Studies, School of Mathematics, Northwest University, Xi'an, Shaanxi Province, 710069, China }

\begin{abstract}
 This paper investigates Cauchy problems for nonlinear fractional time-space generalized Keller-Segel equation $^c_0D_t^\beta\rho+(-\triangle)^{\frac{\alpha}{2}}\rho+\nabla\cdot(\rho B(\rho))=0$, where Caputo derivative $^c_0D_t^\beta\rho$ models memory effects in time, fractional Laplacian $(-\triangle)^{\frac{\alpha}{2}}\rho$ represents L\'evy diffusion and $B(\rho)=-s_{n,\gamma}\int_{R^n}\frac{x-y}{|x-y|^{n-\gamma+2}}\rho(y)dy $ is the general potential with a singular kernel which takes into account the long rang interaction. We first establish $L^r-L^q$ estimates and weighted estimates of the fundamental solutions $(P(x,t), Y(x,t))$ (or equivalently, the solution operators $(S_\alpha^\beta(t), T_\alpha^\beta(t))$). Then, we prove the existence and uniqueness of the mild solutions when initial data are in $L^p$ spaces, or the weighted spaces. Similar to Keller-Segel equations, if the initial data are small in
 critical space $L^{p_c}(\mathbb{R}^n)$ ($p_c=\frac{n}{\alpha+\gamma-2}$), we construct the global existence. Furthermore, we prove the $L^1$ integrability and integral preservation when the initial data are in $L^1(\mathbb{R}^n)\cap L^p(\mathbb{R}^n)$ or $L^1(\mathbb{R}^n)\cap L^{p_c}(\mathbb{R}^n)$. Finally, some important properties of the mild solutions including the nonnegativity preservation, mass conservation and blowup behaviors are established.
\end{abstract}

\begin{keyword}
time-space fractional diffusion equation, mild solution, existence and uniqueness, nonnegativity, mass conservation, finite time blow up
\MSC[2010] 26A33\sep  35A05\sep 35A08
\end{keyword}
\end{frontmatter}

%\linenumbers

\section{Introduction}

Fractional derivatives \cite{skm93,kst06,DK,gly2015,liliu17} are employed to describe the nonlocal effects in time and space.  They are integro-differential operators generalizing the definition of integer order derivative to fractional orders, and have been used to deal with numerous application in areas such as physics, hydrology, biomedical engineering, control theory, to name a few \cite{N,KS,lll17,DCL,W,SBM, liliu17discrete}. Time fractional derivatives \cite{D,skm93,kst06,liliu17} are usually applied to model the ubiquitous memory effects. The theory of time fractional differential equations, especially time fractional ODEs, has been developed by many authors \cite{kst06,DK,fllx17,llljg17}.
Both time and spatial fractional derivatives \cite{N,V2,DCL} can be used for anomalous diffusion or dispersion when a particle plume spreads at a rate inconsistent with the Brownian motion models. The appearance of spatial fractional derivatives in diffusion equations are exploited for  macroscopic description of transport and often lead to superdiffusion phenomenon. Time fractional derivatives are usually connected with anomalous subdiffusion, where a cloud of particles spread more slowly than a classical diffusion \cite{MS,CKK}, because  particle sticking and trapping phenomena ordinarily display power-law behaviors.

There have been a lot of works investigating fractional partial differential equation (see \cite{V2,HL,eidelman2004,kv14,Taylor,RZ,kemppainen2017} for example).  Huang and Liu \cite{HL} studied the uniqueness and stability of nonlocal Keller-Segel equations by considering a self-consistent stochastic process driven by rotationally invariant $\alpha$-stable L\'evy process. Taylor \cite{Taylor} constructed the formulas and estimates for the solution to nonhomogeneous time fractional diffusion equations $ ^c_0D_t^\beta u+Au-q(t)=0$ where $A$ is a positive self-adjoint operator. Zacher \cite{RZ} considered the regularity of weak solutions to linear diffusion equations with Riemann-Liouville time fractional derivative in a bounded domain in $\mathbb{R}^n$.  Kemppainen, Siljander and Zacher \cite{kemppainen2017} performed a careful analysis of the large-time behaviors for fully nonlocal diffusion equations. Allen, Caffarelli and Vasseur \cite{ACV0,ACV} discussed porous medium flows and parabolic problems with fractional time derivative of Caputo-type.

In this paper, we focus on Cauchy problems of the following nonlinear time-space fractional diffusion equations (NFDE):
\begin{equation}\label{generalized Keller-Segel}
\left\{
  \begin{aligned}
  &^c_0D_t^\beta\rho+(-\triangle)^{\frac{\alpha}{2}}\rho+\nabla\cdot(\rho B(\rho))=0  && \mbox{in } (x,t)\in \mathbb{R}^n\times(0,\infty),\\
  &\rho(x,0)=\rho_0(x),\\
  \end{aligned}
     \right.
\end{equation}
where $0<\beta<1,1<\alpha\leq2$. $ ^c_0D_t^\beta$ is Caputo fractional derivative operator of order $\beta$. Caputo derivative was first introduced in \cite{D} and is more suitable for the initial-value problem compared with the Riemann-Liouville fractional derivative. There are many recent definitions of the Caputo derivative in the literature listed to generalize the traditional definition \cite{gly2015,ACV,bernardis2016,liliu17,llljg17}. We will use the definition introduced in \cite{liliu17,llljg17} because of the theoretic convenience (see also Section \ref{sec:preliminary} for the detailed introduction). When the function $v$ is absolutely continuous in time, the definition in  \cite{liliu17,llljg17} reduces to the traditional form:
 \begin{equation}\label{Caputofractionalderivative}
 ^c_0D_t^\beta v(t)=\frac{1}{\Gamma(1-\beta)}\int_0^t(t-s)^{-\beta}\dot{v}(s)ds,
 \end{equation}
where $\Gamma$ is the Gamma function and $\dot{v}(t)$ is the first order integer derivative of function $v(t)$ with respect to independent variable $t$.
According to Chapter V in \cite{Stein}, the nonlocal operator $(-\triangle)^{\frac{\alpha}{2}}$, known as the Laplacian of order $\frac{\alpha}{2}$, is given by means of the Fourier multiplier
\[
(-\triangle)^{\frac{\alpha}{2}}\rho(x):=\mathcal{F}^{-1}\big(|\xi|^{\alpha}\hat{\rho}(\xi)\big)(x).
\]
where
\begin{gather}\label{eq:FourierTrans}
\hat{\rho}(\xi)=\mathcal{F}\big(\rho(x)\big)=\int_{\mathbb{R}^n}\rho(x)e^{-ix\cdot\xi}\,dx
\end{gather}
is the Fourier transformation of $\rho(x)$.
For $\gamma\in(1,n],$ $ n\geq2$ and some constant $s_{n,\gamma}>0$, the linear vector operator $B$ can be formally represented as $B(\rho)=\nabla((-\bigtriangleup)^{-\frac{\gamma}{2}}\rho)$ and is explicitly expressed with convolution of a singular kernel as follows\\
\begin{equation}\label{eq:B}
B(\rho)(x)=-s_{n,\gamma}\int_{\mathbb{R}^n}\frac{x-y}{|x-y|^{n-\gamma+2}}\rho(y)dy,
\end{equation}
which is the attractive kernel as \cite{LRZ} pointed out.

Model \eqref{generalized Keller-Segel} generalizes the well known parabolic-elliptic Keller-Segel model \cite{JL,horstmann2003,BDP}:
\begin{gather}\label{eq:classicalks}
\left\{
\begin{split}
& \partial_t\rho=\Delta\rho-\nabla\cdot(\rho\nabla c),\\
& -\Delta c=\rho.
\end{split}
\right.
\end{gather}
Here, $\rho(x,t)$ represents the density of bacteria cells and $c$ represents the density of chemical substance.  The original parabolic-parabolic Keller-Segel model was first introduced in \cite{keller1970} for chemotactic migration processes, while the parabolic-elliptic model was introduced due to the fact that the diffusion coefficient of the chemical substance is very large \cite{JL}. The equation $-\Delta c=\rho$ models the fact that the bacteria generate chemical substance and the  term $-\nabla \cdot(\rho\nabla c)$ comes from  chemotaxis (the bacteria move according to the chemical gradient). Hence, $-\nabla \cdot(\rho\nabla c)$ is the aggregation term and the two terms on the right hand side of the first equation compete with each other. The aggregation term can not be bounded in all cases, and the destabilizing effect indeed causes the solution to blow up in finite time for some cases \cite{JL,nagai2001}. In  the classical work of J\"ager and Luckhaus \cite{JL}, they studied the blowup of the Keller-Segel system based on a certain comparison principle for the radially symmetric solutions. This technique has recently been modified for much larger classes including refined chemotaxis models  \cite{taowinkler17,bellomo2017}. Another excellent strategy to prove blowup relies on the moment method dating back to \cite{nagai2001} and later for more general systems in \cite{BK}.  The moment method seems to be restricted to parabolic-elliptic system, and for the techniques regarding fully parabolic-parabolic system, one can refer to \cite{herrero1997,winkler2013}.

Our model \eqref{generalized Keller-Segel} describes the biological phenomenon chemotaxis with both anomalous diffusion and memory effects.  Formally, \eqref{generalized Keller-Segel} is just to replace the time derivative in the Keller-Segel model \eqref{eq:classicalks} with the Caputo fractional derivative and the Laplacian in the second equation with $\gamma/2$-fractional Laplacian, when the memory effect and nonlocality are concerned. Our model retains some interesting and essential features of the Keller-Segel model \eqref{eq:classicalks}. Indeed, in Section  \ref{sec:blowup}, we show that the aggregation term still causes blowup when the mass is large and concentrated for the indices we consider. Since our system is the generalization of the parabolic-elliptic system, the method we use for blowup is the moment method.

With the usual time derivative where no memory is concerned, the generalized Keller-Segel equation for chemotaxis with anomalous diffusion has been studied by several authors. When $\gamma=2$, Escudero \cite{E} constructed global in time solutions for the fractional diffusion with $1<\alpha\le 2$.  In two dimensional space ($n=2$) and $\gamma=2$, Biler and Wu \cite{BW} investigated the Cauchy problem with initial data $u_0$ in critical Besov spaces $\dot{B}^{1-\alpha}_{2,r}(\mathbb{R}^2)$ for $r\in[1,\infty]$ and $1<\alpha<2$. Li, Rodrigo and Zhang \cite{LRZ} proved the wellposedness, continuation criteria and smoothness of local solutions. For general $\gamma$ and $\alpha$, Biler and Karch \cite{BK} studied the local and global existence of mild solutions in $C([0, T], L^p(\mathbb{R}^n))$,  and blowup behaviors.

For the model with memory effects and anomalous diffusion \eqref{generalized Keller-Segel},  first of all, in Proposition \ref{pro:Lpofevoloperator}, \ref{pro:timecontinuity} and \ref{pro:weightedfund} , we construct the $L^r-L^q$ estimates, the time continuity and weighted estimates of the solution operators $(S_{\alpha}^{\beta}(t), T_{\alpha}^{\beta}(t))$, using the results of the asymptotic behavior of the fundamental solutions $(P(x,t), Y(x,t))$ to fully nonlocal diffusion equations established in \cite{eidelman2004,kemppainen2017}, listed in Lemma \ref{lmm:asymp}, Proposition \ref{pro:fundP} and Proposition \ref{pro:fundY}. With the help of above estimates, in Theorem \ref{thm:localexis}, \ref{thm:globalexis} and \ref{thm:weighted}, we prove the existence and uniqueness of the mild solutions to \eqref{generalized Keller-Segel} with the initial data in $L^p$ spaces and weighted spaces.  The main results regarding $L^p$ spaces are as following:
\begin{thm*}[Theorem \ref{thm:localexis}]
Suppose $n\ge 2$, $0<\beta<1$, $1<\alpha\le 2$ and $1<\gamma\leq n$. Let $p\in (p_c,\infty)\cap [\frac{2n}{n+\gamma-1}, \frac{n}{\gamma-1})$. Then, for any $\rho_0\in L^p(\mathbb{R}^n)$, there exists $T>0$ such that the equation \eqref{generalized Keller-Segel} admits a unique mild solution in $C([0, T]; L^p(\mathbb{R}^n))$ with initial value $\rho_0$ in the sense of Definition \ref{def:mild}. Define the largest time of existence
\[
T_b=\sup\{T>0: \text{\eqref{generalized Keller-Segel} has a unique mild solution in }C([0, T]; L^p(\mathbb{R}^n))\}.
\]
Then if $T_b<\infty$, we have $\limsup_{t\to T_b^-}\|\rho(\cdot, t)\|_{p}=+\infty$.
\end{thm*}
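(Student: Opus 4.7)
The plan is to apply Banach's fixed point theorem to the Duhamel/mild formulation
\begin{equation*}
\mathcal{N}(\rho)(t) := S_\alpha^\beta(t)\rho_0 - \int_0^t T_\alpha^\beta(t-s)\,\nabla\!\cdot\!\bigl(\rho(s)B(\rho(s))\bigr)\,ds,
\end{equation*}
working on the complete metric space $X_T = C([0,T];L^p(\mathbb{R}^n))$ with the sup norm $\|\rho\|_{X_T}=\sup_{t\in[0,T]}\|\rho(\cdot,t)\|_p$, and considering a closed ball $B_R\subset X_T$ of radius $R\asymp\|\rho_0\|_p$. For the linear part, Proposition~\ref{pro:Lpofevoloperator} gives $\|S_\alpha^\beta(t)\rho_0\|_p\le C\|\rho_0\|_p$ uniformly in $t\ge 0$.

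The nonlinear part is estimated in three stages. First, since $B=\nabla(-\Delta)^{-\gamma/2}$ is a Riesz-type singular integral of order $\gamma-1$, the Hardy--Littlewood--Sobolev inequality yields $\|B(\rho)\|_q\le C\|\rho\|_p$ with $\tfrac{1}{q}=\tfrac{1}{p}-\tfrac{\gamma-1}{n}$, which requires precisely $p<\tfrac{n}{\gamma-1}$. Second, H\"older gives $\|\rho B(\rho)\|_r\le\|\rho\|_p\|B(\rho)\|_q\le C\|\rho\|_p^2$ with $\tfrac{1}{r}=\tfrac{2}{p}-\tfrac{\gamma-1}{n}$; the lower bound $p\ge\tfrac{2n}{n+\gamma-1}$ is exactly what guarantees $r\ge 1$ so that the $L^r$--$L^p$ bound for $T_\alpha^\beta(t)\nabla\cdot$ from Proposition~\ref{pro:Lpofevoloperator} is available. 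Third, applying that smoothing estimate yields $\|T_\alpha^\beta(t-s)\nabla\!\cdot\!(\rho B(\rho))\|_p\le C(t-s)^{-\theta}\|\rho\|_p^2$ with $\theta=\beta-1+\tfrac{\beta}{\alpha}\bigl(1+n(\tfrac{1}{r}-\tfrac{1}{p})\bigr)=1-\tfrac{\beta}{\alpha}(\alpha+\gamma-2-\tfrac{n}{p})$. Integrating in $s$ gives
\begin{equation*}
\Bigl\|\int_0^t T_\alpha^\beta(t-s)\nabla\!\cdot\!(\rho B(\rho))\,ds\Bigr\|_p \le C\,T^{\sigma}\|\rho\|_{X_T}^2, \qquad \sigma := 1-\theta = \tfrac{\beta}{\alpha}\bigl(\alpha+\gamma-2-\tfrac{n}{p}\bigr),
\end{equation*}
where $\sigma>0$ is equivalent to $p>p_c=\tfrac{n}{\alpha+\gamma-2}$. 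This is the scaling-critical condition and is the subtle part of the argument: the three conditions on $p$ each correspond to a different estimate (HLS lower bound, HLS upper bound, time-integrability at $s=t$).

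The difference $\mathcal{N}(\rho_1)-\mathcal{N}(\rho_2)$ is treated identically after writing $\rho_1B(\rho_1)-\rho_2B(\rho_2) = (\rho_1-\rho_2)B(\rho_1)+\rho_2B(\rho_1-\rho_2)$, giving $\|\mathcal{N}(\rho_1)-\mathcal{N}(\rho_2)\|_{X_T}\le C\,T^\sigma(\|\rho_1\|_{X_T}+\|\rho_2\|_{X_T})\|\rho_1-\rho_2\|_{X_T}$. Choosing $R=2C\|\rho_0\|_p$ and then $T$ small enough that $C T^\sigma R^2\le R/2$ and $2C T^\sigma R\le 1/2$, the map $\mathcal{N}$ sends $B_R$ into itself and is a strict contraction; Banach's fixed point theorem produces the unique mild solution. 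Time-continuity $\rho\in C([0,T];L^p)$ is then inherited from the time-continuity of $S_\alpha^\beta(t)\rho_0$ (Proposition~\ref{pro:timecontinuity}) combined with the absolute continuity of the Duhamel integral in $t$.

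For the blow-up alternative, the key observation is that the local existence time $T$ constructed above depends on $\rho_0$ only through the norm $\|\rho_0\|_p$, via an explicit lower bound $T\gtrsim\|\rho_0\|_p^{-2/\sigma}$. Suppose $T_b<\infty$ and, for contradiction, $M:=\limsup_{t\to T_b^-}\|\rho(\cdot,t)\|_p<\infty$. Pick $t_0<T_b$ with $T_b-t_0$ smaller than the local existence time associated to initial data of norm $2M$, and take $\rho(\cdot,t_0)$ as new initial datum; by the local theory we obtain a mild solution on $[t_0,t_0+\tau]$ with $t_0+\tau>T_b$, which by uniqueness extends the original solution strictly past $T_b$, contradicting maximality. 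The main obstacle throughout is bookkeeping: verifying that the exponent $\theta$ is strictly less than one under the stated hypotheses on $(p,\alpha,\beta,\gamma,n)$, and that the three inequalities on $p$ are simultaneously needed and sufficient.
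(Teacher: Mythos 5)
The fixed-point half of your argument is essentially the paper's: the paper invokes the Biler--Karch bilinear lemma (Lemma \ref{lmm:basicexis}) rather than Banach's theorem on a ball, but the estimates are identical, and your exponent bookkeeping is correct ($\sigma=\tfrac{\beta}{\alpha}(\alpha+\gamma-2-\tfrac{n}{p})>0$ iff $p>p_c$, with the two HLS constraints producing the interval $[\tfrac{2n}{n+\gamma-1},\tfrac{n}{\gamma-1})$; note the sign slip $\beta-1$ versus $1-\beta$ in your intermediate formula for $\theta$, though your final expression is right). One small omission: you must also check the admissibility constraint $p<\theta_4$ for the smoothing estimate \eqref{Lp-estimateofsloution3}; the paper verifies this is automatic from $p>p_c$. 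There are, however, two genuine gaps.

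First, uniqueness. Your contraction yields uniqueness only in the ball $B_R$, whereas the theorem asserts uniqueness among all mild solutions in $C([0,T];L^p(\mathbb{R}^n))$. The paper closes this with a separate argument: for two solutions it derives the singular Volterra inequality $e(t)\le CM\int_0^t(t-\tau)^{\sigma-1}e(\tau)\,d\tau$ for $e(t)=\|\rho_1-\rho_2\|_{C([0,t];L^p)}$ and applies a comparison principle for fractional integral inequalities to conclude $e\equiv0$. Some version of this singular Gronwall step is needed and is missing from your write-up.

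Second, and more seriously, the blow-up alternative. Your continuation step --- ``take $\rho(\cdot,t_0)$ as new initial datum'' --- is the standard semigroup argument and fails here because the equation is non-Markovian. The mild formulation at time $t>t_0$ contains the memory integral $\int_0^{t_0}\nabla\cdot\bigl(T_\alpha^\beta(t-s)\rho(s)B(\rho)(s)\bigr)\,ds$ over the entire past, so the shifted function $\tilde\rho(\tau)=\rho(t_0+\tau)$ does \emph{not} satisfy the same integral equation with initial datum $\rho(t_0)$. The paper flags exactly this point (``we cannot apply the continuation technique'') and instead extends the solution by solving a modified integral equation in which the full history term
\begin{equation*}
S_\alpha^\beta(t+T_b)\rho_0-\int_0^{T_b}\nabla_x\cdot\bigl(T_\alpha^\beta(T_b+t-s)\rho(s)B(\rho)(s)\bigr)\,ds
\end{equation*}
replaces $S_\alpha^\beta(t)\rho_0$ as the inhomogeneous part; one must then verify that this term belongs to $C([0,\delta];L^p(\mathbb{R}^n))$, using the assumed boundedness of $\|\rho\|_p$ up to $T_b$, before reapplying the fixed-point lemma. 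Your argument can be repaired along these lines, but as written the restart step is incorrect, and your lower bound $T\gtrsim\|\rho_0\|_p^{-2/\sigma}$ would have to be applied to the norm of the whole history term rather than to $\|\rho(t_0)\|_p$.
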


\begin{thm*}[Theorem \ref{thm:globalexis}]
Suppose $n\ge 2$, $0<\beta<1$, $1<\alpha\le 2$ and $1<\gamma\leq n$. Let $\nu=\infty$ if $2(\alpha+\gamma-2)\beta-\alpha\le 0$ or $\nu=\frac{2n\beta}{2(\alpha+\gamma-2)\beta-\alpha}$ if $2(\alpha+\gamma-2)\beta-\alpha> 0$.
Then, whenever $(p_c,\nu)\cap [\frac{n}{n-\alpha+1}, \frac{n}{\gamma-1})$ is nonempty, for any $p\in (p_c,\nu)\cap [\frac{n}{n-\alpha+1}, \frac{n}{\gamma-1})$, there exists $\delta>0$ such that for all $\rho_0\in L^{p_c}(\mathbb{R}^n)$ with
$\|\rho_0\|_{p_c}\le \delta$,  the equation \eqref{generalized Keller-Segel} admits a mild solution $\rho\in C([0, \infty); L^{p_c}(\mathbb{R}^n))$ with initial value $\rho_0$ in the sense of Definition \ref{def:mild}, satisfying
\begin{gather}
\|\rho(t)\|_{p_c}\le 2\delta, \forall t>0,
\end{gather}
and $\rho\in C((0,\infty), L^p(\mathbb{R}^n))$.
Further, the solution is unique in
\begin{gather}
X_T:=\Big\{\rho\in C([0, T]; L^{p_c}(\mathbb{R}^n))\cap C((0,T], L^p(\mathbb{R}^n)): \|\rho\|_{p_c,p; T}<\infty \Big\},~T\in (0, \infty).
\end{gather}
\end{thm*}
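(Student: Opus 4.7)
The proof is a contraction mapping argument on the Duhamel representation
\[
\Phi[\rho](t)=S_\alpha^\beta(t)\rho_0-\int_0^t T_\alpha^\beta(t-s)\,\nabla\!\cdot\!\bigl(\rho(s)B(\rho(s))\bigr)\,ds
\]
in the complete metric space $X_T$, with $T=\infty$, equipped with the norm $\|\rho\|_{p_c,p;T}=\sup_{t\le T}\|\rho(t)\|_{p_c}+\sup_{0<t\le T}t^{\sigma}\|\rho(t)\|_p$, where $\sigma=\tfrac{\beta n}{\alpha}\bigl(\tfrac{1}{p_c}-\tfrac{1}{p}\bigr)$ is dictated by the scaling of $S_\alpha^\beta(t)$. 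I would fix a ball $B_R:=\{\rho\in X_\infty:\|\rho\|_{p_c,p;\infty}\le R\}$ for $R$ proportional to $\delta$, and show that, provided $\delta$ is small enough, $\Phi$ maps $B_R$ into itself and is a strict contraction. Global existence in $C([0,\infty);L^{p_c})$ then follows from the Banach fixed point theorem, uniqueness in $X_T$ is automatic from the contraction property, and the stated $\|\rho(t)\|_{p_c}\le 2\delta$ bound is the ball condition.

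\textbf{Linear part.} Using the $L^r\!-\!L^q$ bounds of Proposition \ref{pro:Lpofevoloperator} with $r=p_c$ and $q\in\{p_c,p\}$, I would get $\|S_\alpha^\beta(t)\rho_0\|_{p_c}\le C\|\rho_0\|_{p_c}$ and $\|S_\alpha^\beta(t)\rho_0\|_{p}\le C t^{-\sigma}\|\rho_0\|_{p_c}$, so the linear term contributes at most $C\delta$ to $\|\Phi[\rho]\|_{p_c,p;\infty}$. Continuity in time at $t=0$ in $L^{p_c}$ and continuity on $(0,\infty)$ in $L^p$ come from Proposition \ref{pro:timecontinuity}.

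\textbf{Nonlinear part.} For the Duhamel term I would combine three ingredients. First, $B$ is a Riesz-type operator of order $\gamma-1$, so by Hardy--Littlewood--Sobolev, $\|B(\rho)\|_{q_1}\lesssim\|\rho\|_{p}$ for $\tfrac{1}{q_1}=\tfrac{1}{p}-\tfrac{\gamma-1}{n}$, which forces $p<\tfrac{n}{\gamma-1}$. Second, H\"older's inequality gives $\|\rho B(\rho)\|_{q_2}\lesssim\|\rho\|_p\|B(\rho)\|_{q_1}$ with $\tfrac{1}{q_2}=\tfrac{2}{p}-\tfrac{\gamma-1}{n}$. Third, the gradient estimate for $T_\alpha^\beta(t-s)$ (of the form $\|\nabla T_\alpha^\beta(t-s)f\|_{r}\lesssim (t-s)^{\beta-1-\frac{\beta}{\alpha}(1+n(\frac{1}{q_2}-\frac{1}{r}))}\|f\|_{q_2}$, derived in the same way as Proposition \ref{pro:Lpofevoloperator} but with one spatial derivative) lets me estimate $\|\Phi[\rho](t)-S_\alpha^\beta(t)\rho_0\|_r$ in $r=p_c$ and $r=p$. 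The algebra was set up so that the criticality relation $p_c=\tfrac{n}{\alpha+\gamma-2}$ makes the $s$-integral of $(t-s)^{\cdots}s^{-2\sigma}$ produce exactly the correct power of $t$ (namely $t^0$ for $r=p_c$ and $t^{-\sigma}$ for $r=p$) with a finite Beta-function constant. This yields $\|\Phi[\rho]\|_{p_c,p;\infty}\le C\delta+C\|\rho\|_{p_c,p;\infty}^2$, and an analogous bilinear bound gives the Lipschitz constant $\le C R$, so $R\sim\delta$ small closes the argument.

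\textbf{Main obstacle and the role of $\nu$.} The delicate step, and the reason for the range restrictions on $p$, is verifying that the two exponents in the Beta integral are strictly less than one and their sum strictly greater than zero. The lower bound $p\ge\tfrac{n}{n-\alpha+1}$ is what guarantees the time exponent in the gradient estimate at $r=p_c$ is $>-1$ (otherwise the integrand is not locally integrable near $s=t$); the upper bound $p<\nu$, with $\nu=\tfrac{2n\beta}{2(\alpha+\gamma-2)\beta-\alpha}$ when the denominator is positive, is precisely the condition that makes $2\sigma<1$, so the factor $s^{-2\sigma}$ from the weighted $L^p$ norm is integrable near $s=0$. Once these two algebraic inequalities are checked, the rest is bookkeeping: I would verify non-emptiness of the interval $(p_c,\nu)\cap[\tfrac{n}{n-\alpha+1},\tfrac{n}{\gamma-1})$ case by case, then extract the continuity $\rho\in C([0,\infty);L^{p_c})\cap C((0,\infty);L^p)$ from the standard approximation argument (dense smooth initial data plus uniform bounds), completing the proof.
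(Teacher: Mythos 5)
Your proposal follows essentially the same route as the paper: a fixed-point argument for the bilinear Duhamel map in $X_T$ with the weighted norm $\|\cdot\|_{p_c,p;T}$, using the $L^r$--$L^q$ bounds for $S_\alpha^\beta$ and $\nabla T_\alpha^\beta$ together with H\"older and Hardy--Littlewood--Sobolev, and with $p<\nu$ correctly identified as the condition $2\frac{n\beta}{\alpha}(\frac{1}{p_c}-\frac{1}{p})<1$ that makes the factor $s^{-2\sigma}$ integrable near $s=0$. One bookkeeping correction: the constraint $p\ge\frac{n}{n-\alpha+1}$ is what forces the H\"older/HLS exponent $q$ (with $\frac{1}{q}=\frac{1}{p_c}+\frac{1}{p}-\frac{\gamma-1}{n}$) to satisfy $q\ge 1$, while local integrability of $(t-s)^{\cdots}$ near $s=t$ is instead guaranteed by $p>p_c$ --- not the other way around, as you state.
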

Here, the critical $p_c$ is given by $p_c=\frac{n}{\alpha+\gamma-2}$ (see Section \ref{sec:exisuniq} to see why $L^{p_c}$ is critical). These results are based on a fixed point lemma for a bilinear operator in Banach space (see Lemma \ref{lmm:basicexis}).

In the framework of mild solutions, some basic properties such as nonnegativity preservation and mass conservation are nontrivial. We prove the $L^1$ integrability and integral preservation property of the mild solutions in Theorem \ref{thm:LpL1} and \ref{thm:L1critical} when initial data belong to $L^1(\mathbb{R}^n)\cap L^p(\mathbb{R}^n)$ spaces or $L^1(\mathbb{R}^n)\cap L^{p_c}(\mathbb{R}^n)$. Interestingly, if $\rho_0\in L^1(\mathbb{R}^n)\cap L^p(\mathbb{R}^n)$, we can prove the existence of mild solutions for larger range of $p$ values in Theorem \ref{thm:LpL1} compared with Theorem \ref{thm:localexis}. We then establish nonnegativity preservation (and therefore mass conservation) in Section \ref{sec:positivity}. The main result regarding nonnegativity reads:
\begin{thm*}[Theorem \ref{thm:positivity}]
In Theorem \ref{thm:localexis} (or Theorem \ref{thm:globalexis}, Theorem \ref{thm:LpL1}, Theorem \ref{thm:L1critical}, Theorem \ref{thm:weighted}), if we also have  $\rho_0\ge 0$, then for all $t$ in the interval of existence we have
\begin{gather}
\rho(x, t)\ge 0.
\end{gather}
\end{thm*}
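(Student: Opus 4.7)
The plan is to combine three ingredients: nonnegativity of the fundamental kernels $P(\cdot,t)$ and $Y(\cdot,t)$, a linearized Picard iteration that decouples the nonlinearity, and a Caputo chain-rule argument together with a fractional Gr\"onwall inequality that propagates nonnegativity through the iterates. By the uniqueness of mild solutions already established in Theorems \ref{thm:localexis}--\ref{thm:weighted}, it suffices to exhibit a nonnegative sequence converging to $\rho$ in the relevant Banach space. Via the subordination representation of the kernels against the $\alpha$-stable heat semigroup by the inverse $\beta$-stable subordinator (cf.\ \cite{eidelman2004,kemppainen2017}), both $P$ and $Y$ are nonnegative, so $S_\alpha^\beta(t)\rho_0=P(\cdot,t)*\rho_0\geq 0$ whenever $\rho_0\geq 0$. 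This handles the free part of the Duhamel formula.

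Introduce the decoupled iteration
$$^c_0D_t^\beta\rho^{(n+1)}+(-\Delta)^{\alpha/2}\rho^{(n+1)}+\nabla\cdot\bigl(\rho^{(n+1)}B(\rho^{(n)})\bigr)=0,\qquad \rho^{(n+1)}(0)=\rho_0,$$
with $\rho^{(0)}(t):=S_\alpha^\beta(t)\rho_0$. Using the same $L^r$--$L^q$ and weighted estimates from Proposition \ref{pro:Lpofevoloperator} and Proposition \ref{pro:weightedfund} that drive the contraction in Theorems \ref{thm:localexis}--\ref{thm:weighted}, each $\rho^{(n+1)}$ exists on a common time interval and $\rho^{(n)}\to\rho$ in the relevant space. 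It thus suffices to propagate positivity through one linear step. Fix $n$, write $b:=B(\rho^{(n)})$, regularize to a smooth bounded drift $b_\varepsilon$ by spatial mollification and truncation, and let $\sigma_\varepsilon$ be the corresponding linear solution. Apply the convexity inequality ${}^c_0D_t^\beta\Phi(\sigma_\varepsilon)\leq\Phi'(\sigma_\varepsilon)\,{}^c_0D_t^\beta\sigma_\varepsilon$ with $\Phi(s):=\tfrac12(s^-)^2$, integrate in $x$, use the Stroock--Varopoulos-type positivity $\int\Phi'(\sigma_\varepsilon)(-\Delta)^{\alpha/2}\sigma_\varepsilon\,dx\geq 0$, and integrate the drift term by parts to obtain
$$^c_0D_t^\beta\!\int\Phi(\sigma_\varepsilon)\,dx\leq C(\varepsilon,n)\int\Phi(\sigma_\varepsilon)\,dx.$$
Since $\Phi(\sigma_\varepsilon)(0)=\Phi(\rho_0)=0$, the fractional Gr\"onwall lemma of \cite{llljg17} forces $\sigma_\varepsilon^-\equiv 0$. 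Passing $\varepsilon\to 0$ by stability of the linear problem yields $\rho^{(n+1)}\geq 0$, and then $n\to\infty$ delivers $\rho\geq 0$.

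The main obstacle is this positivity step: rigorously justifying the Caputo convexity inequality and the Stroock--Varopoulos inequality simultaneously under the limited regularity afforded by the mild formulation, and ensuring that the nonlocal Laplacian produces no contribution at spatial infinity after integration by parts. This requires a careful double smoothing (of both $b$ and $\rho_0$) together with a spatial cutoff and a two-step limit. A probabilistically flavoured alternative that sidesteps these technicalities is to represent $\sigma_\varepsilon$ as the marginal density of an $\alpha$-stable L\'evy process with smooth drift $b_\varepsilon$, time-changed by the inverse $\beta$-stable subordinator; its density is automatically nonnegative, and passing $\varepsilon\to 0$ preserves the sign.
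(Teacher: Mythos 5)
Your strategy is essentially the one the paper uses: decouple the nonlinearity by freezing the drift, mollify it, test the resulting linear equation against the negative part, invoke the positivity of $(-\Delta)^{\alpha/2}$ paired with $\rho^-$ (Corollary \ref{cor:lemfrac}) and a Caputo chain-rule inequality, and close with a fractional Gr\"onwall/comparison argument before passing to the limit. Two differences are worth recording. First, in the transport term the paper replaces the unknown by its positive part, solving $\varrho^{(n)}=S_\alpha^\beta\rho_0-\int_0^t T_\alpha^\beta(t-s)\nabla\cdot((\varrho^{(n)})^+B_{1/n}(\rho))\,ds$ with the drift frozen at the \emph{true} solution $\rho$; when this is paired with $(\varrho^{(n)})^-$ the drift contribution vanishes identically because $(\varrho^{(n)})^+\nabla(\varrho^{(n)})^-=0$ a.e., so the final inequality is the clean $ {}^c_0D_t^\beta\|(\varrho^{(n)})^-\|_2^2\le 0$ with no constant at all. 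Your version keeps the full density in the drift term and integrates by parts, which produces a Gr\"onwall term with a constant of size $\|\nabla\cdot b_\varepsilon\|_\infty$; this still closes (the initial negative part is zero), but the constant blows up as $\varepsilon\to0$, so you are forced to pass the limit at the level of the linear problem rather than at the level of the differential inequality. Second, you use a genuine Picard iteration $\rho^{(n)}\to\rho$ with drifts $B(\rho^{(n)})$, whereas the paper needs only a single family of approximations with drift $B_{1/n}(\rho)$ and a comparison-principle argument for the convergence; your route additionally requires uniform control of the iterates on a common interval, which is available from the bilinear estimates but is an extra layer.

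The one genuine gap is the step you yourself flag: justifying the Caputo convexity inequality and the pairing $\langle(-\Delta)^{\alpha/2}\sigma,\sigma^-\rangle\le 0$ under the regularity actually available for a mild solution. This is not a minor technicality that can be waved through by ``double smoothing and a spatial cutoff''; it is where most of the work in the paper's proof lives. Concretely, one must first show the approximate solution lies in $C^\beta([0,\infty),L^2)\cap C^\infty((0,\infty),L^2)$ so that the pointwise representation \eqref{Integrationby} of the Caputo derivative applies (this is what replaces your abstract chain rule), and then bootstrap spatial regularity through the smoothing estimates for $E_\beta(-t^\beta A)$ and $E_{\beta,\beta}(-t^\beta A)$ in $H^{\sigma,2}$ until $\varrho^{(n)}(t)\in H^{1,2}$ and $(-\Delta)^{\alpha/2}\varrho^{(n)}(t)\in H^{-1,2}$ for $t>0$, which is exactly the duality needed to make sense of $\langle(-\Delta)^{\alpha/2}\varrho^{(n)},(\varrho^{(n)})^-\rangle$ and of $\nabla(\varrho^{(n)})^-=-\mathbbm{1}_{\varrho^{(n)}\le0}\nabla\varrho^{(n)}$. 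Until that bootstrapping is carried out, your central display $ {}^c_0D_t^\beta\int\Phi(\sigma_\varepsilon)\,dx\le C(\varepsilon,n)\int\Phi(\sigma_\varepsilon)\,dx$ is formal. The probabilistic alternative you mention would indeed sidestep this, but it proves nonnegativity of a fundamental solution of the frozen-drift equation, and one would still need to identify that object with the mild solution of your linear step, which again requires uniqueness and regularity statements not supplied in the proposal.
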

The nonnegativity preservation is based on the following important properties (see Corollary \ref{cor:lemfrac} and equation \eqref{5.9})
\begin{gather*}
\langle (-\Delta)^{\frac{\alpha}{2}}\rho, \rho^+\rangle\ge 0,\\
\int_{\mathbb{R}^n} (_0^cD_t^{\beta}\rho) \rho^-\,dx\le -\frac{1}{2} (_0^cD_t^{\beta}\|\rho^-\|_2^2),
\end{gather*}
where $\rho^+=\max(\rho, 0)=\rho \vee 0$ and $\rho^-=-\min(\rho, 0)=-\rho\wedge 0$. The rigorous proof is achieved by utilizing some approximating sequences to gain required regularity.

In Section \ref{sec:blowup}, we investigate blowup behaviors of system \eqref{generalized Keller-Segel}  using the $\nu$-moment  ($1<\nu<\alpha$) following the method used in \cite{nagai2001,BK}. The main results are as follows:
\begin{thm*}[Theorem \ref{thm:blowup}]
Assume $n\ge 2$, $0<\beta<1, 1<\alpha\le 2, 1<\gamma\leq n$. Assume $\rho_0\ge 0$ and also the conditions in Theorem \ref{thm:LpL1} (or Theorem \ref{thm:weighted}) hold to ensure the existence of mild solutions. If one of the following conditions are satisfied,
\begin{enumerate}[(i)]
\item For $\alpha=2, \gamma=n,\nu=2$, if $\rho_0\in L^1(\mathbb{R}^n, (1+|x|^{\nu})dx)$ so that
\[
\|\rho_0\|_1>\frac{2n}{s_{n,\gamma}}.
\]

\item For some $\nu\in (1, \alpha)$ when $\alpha<2$  or $\nu\in (1, 2]$ when $\alpha=2$, with $\frac{n-\gamma+2}{\nu}>1$, if $\rho_0\in L^1(\mathbb{R}^n, (1+|x|^{\nu})dx)$ and
\[
\|\rho_0\|_1>M^*,~\int_{\mathbb{R}^n} |x|^{\nu}\rho_0(x)\,dx<\delta,
\]
for certain constants $M^*(\nu,n,\alpha,\gamma)$ and $\delta(\nu,n, \alpha,\gamma)$.

\item Suppose $\alpha+\gamma<n+2$ (or $p_c>1$) and for some $\nu\in (1, \alpha)$ when $\alpha<2$  or $\nu\in (1, 2]$ when $\alpha=2$. If $\rho_0\in L^1(\mathbb{R}^n, (1+|x|^{\nu})dx)$ satisfying
\begin{equation}\label{BlowupC}
\frac{\int_{\mathbb{R}^n}|x|^\nu \rho_0(x)dx}{\int_{\mathbb{R}^n}\rho_0(x)dx}\leq \chi\left(\int_{\mathbb{R}^n}\rho_0(x)dx\right)^{\frac{\nu}{n+2-\alpha-\gamma}},
\end{equation}
where $\chi=\delta (M^*)^{-1+\frac{\nu}{\alpha+\gamma-2-n}}$ ($M^*$, $\delta$ are the constants in (ii)).
\end{enumerate}
then the mild solution of \eqref{generalized Keller-Segel} will blow up in a finite time.
\end{thm*}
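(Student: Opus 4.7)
My plan is to adapt the Nagai/Biler--Karch moment method \cite{nagai2001,BK} to the Caputo time-fractional setting. Assuming for contradiction that the mild solution exists globally, I track the $\nu$-moment
\begin{equation*}
M_\nu(t):=\int_{\mathbb{R}^n}|x|^\nu\rho(x,t)\,dx,
\end{equation*}
which is well-defined and continuous on the existence interval thanks to the weighted mild-solution bounds (Proposition \ref{pro:weightedfund}, Theorem \ref{thm:weighted}). Nonnegativity $\rho\ge 0$ (Theorem \ref{thm:positivity}) and mass conservation $\|\rho(t)\|_1=\|\rho_0\|_1=:M_0$ (Theorems \ref{thm:LpL1}, \ref{thm:L1critical}) are used throughout. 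Testing \eqref{generalized Keller-Segel} against the truncated weight $\phi_R(x)=|x|^\nu\wedge R^\nu$, commuting $\,^c_0D_t^\beta$ with the spatial integral, and passing $R\to\infty$ using the weighted bounds yields
\begin{equation*}
\,^c_0D_t^\beta M_\nu(t)=-\!\int_{\mathbb{R}^n}|x|^\nu(-\Delta)^{\alpha/2}\rho\,dx-\!\int_{\mathbb{R}^n}|x|^\nu\nabla\!\cdot(\rho B(\rho))\,dx.
\end{equation*}

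Next I would derive the two key estimates. For the dissipation, using $(-\Delta)^{\alpha/2}|x|^\nu=c_{n,\alpha,\nu}|x|^{\nu-\alpha}$ in the range $0<\nu<\alpha$ (with the elementary identity $(-\Delta)|x|^2=-2n$ covering case (i)) and controlling the resulting integral $\int\rho\,|x|^{\nu-\alpha}dx$ by splitting between near-origin and far-field parts, one gets an upper bound of the schematic form $K_1 M_0^{a_1}M_\nu^{b_1}$, which in case (i) reduces to $2nM_0$. For the chemotactic term, integration by parts and symmetrization in $(x,y)$ give
\begin{equation*}
-\frac{\nu s_{n,\gamma}}{2}\iint\bigl(x|x|^{\nu-2}-y|y|^{\nu-2}\bigr)\cdot\frac{x-y}{|x-y|^{n-\gamma+2}}\,\rho(x)\rho(y)\,dx\,dy,
\end{equation*}
and the convexity/monotonicity of $x\mapsto x|x|^{\nu-2}$ for $\nu\in(1,2]$ (exactly $|x-y|^\nu$ when $\nu=2$) combined with Hardy--Littlewood--Sobolev gives an upper bound of the form $-K_2 M_0^{a_2}M_\nu^{b_2}$; in case (i) where $\gamma=n$ collapses the kernel to a constant, this is exactly $-s_{n,\gamma}M_0^2$.

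Combining these produces the master inequality
\begin{equation*}
\,^c_0D_t^\beta M_\nu(t)\le K_1 M_0^{a_1}M_\nu(t)^{b_1}-K_2 M_0^{a_2}M_\nu(t)^{b_2}.
\end{equation*}
Whenever its right-hand side is bounded above by $-c<0$, applying the Riemann--Liouville integral $J^\beta$ yields $M_\nu(t)\le M_\nu(0)-ct^\beta/\Gamma(1+\beta)$, which goes negative in finite time and contradicts $M_\nu\ge 0$; by the blow-up criterion of Theorem \ref{thm:localexis} this forces $\limsup_{t\to T_b^-}\|\rho(\cdot,t)\|_p=\infty$. Case (i) follows immediately once $M_0>2n/s_{n,\gamma}$. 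Case (ii) selects $M^*,\delta$ so that $M_0>M^*$ and $M_\nu\le\delta$ keep the RHS $\le -c$; a continuity/bootstrap argument, using that $\,^c_0D_t^\beta M_\nu\le -c$ forces $M_\nu$ to decrease in the fractional sense, propagates the smallness $M_\nu(t)\le\delta$ throughout the maximal existence interval. Case (iii), available only when $\alpha+\gamma<n+2$, is the scale-invariant reformulation of (ii): the dilation $\rho_0\mapsto\lambda^n\rho_0(\lambda\cdot)$ preserves $M_0$ while scaling $M_\nu$ by $\lambda^{-\nu}$, and the specific form of \eqref{BlowupC} is exactly the condition that for some $\lambda$ the rescaled data falls into the regime of (ii). The main obstacle is the rigorous derivation of the moment identity itself: commuting the distributional Caputo derivative past the unbounded spatial weight $|x|^\nu$ and justifying the singular-integral integration by parts for $B(\rho)$ both require careful truncation combined with the uniform-in-time weighted bounds from Theorem \ref{thm:weighted} and the Hardy--Littlewood--Sobolev control of $B(\rho)$ developed in the earlier sections.
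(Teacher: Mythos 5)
Your overall strategy (a $\nu$-moment argument in the style of Nagai and Biler--Karch, closed by the fractional comparison principle) is the one the paper follows, but two of your steps would fail as written. First, the dissipation estimate: with the bare weight $|x|^\nu$ and $1<\nu<\alpha$, one has $(-\Delta)^{\alpha/2}|x|^\nu=c_{n,\alpha,\nu}|x|^{\nu-\alpha}$ with $c_{n,\alpha,\nu}<0$, so the diffusion term contributes $+|c_{n,\alpha,\nu}|\int\rho\,|x|^{\nu-\alpha}dx$, and this integral \emph{cannot} be bounded by any expression of the form $K_1M_0^{a_1}M_\nu^{b_1}$: data concentrating near the origin make $\int\rho|x|^{\nu-\alpha}dx$ arbitrarily large while $M_0$ stays fixed and $M_\nu\to 0$ (and bounding it by $\|\rho(t)\|_{L^q}$ instead is useless, since that norm is precisely what may blow up). The paper's cure is to replace $|x|^\nu$ by the smoothed weight $\varphi(x)=(1+|x|^2)^{\nu/2}-1$ of Lemma \ref{lmm:testvarphi}, for which $(-\Delta)^{\alpha/2}\varphi\in L^\infty(\mathbb{R}^n)$, so the dissipation term is simply bounded by $\|(-\Delta)^{\alpha/2}\varphi\|_\infty M$; the same weight supplies the pointwise inequality \eqref{FI3} that replaces your ``monotonicity of $x\mapsto x|x|^{\nu-2}$'' step and feeds a H\"older interpolation $M^2\le CJ(t)^{1/p}(M^2+2M\omega(t))^{1/p'}$ with $p=\frac{n-\gamma+2}{\nu}$, yielding the closed differential inequality \eqref{EstiamteofM3}.

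Second, your treatment of case (iii) uses the wrong scaling: the mass-preserving dilation $\rho_0\mapsto\lambda^n\rho_0(\lambda\,\cdot)$ is \emph{not} a symmetry of \eqref{generalized Keller-Segel} unless $n=\alpha+\gamma-2$, and since it leaves $\|\rho_0\|_1$ unchanged it can never move data with $\|\rho_0\|_1\le M^*$ into the regime of (ii). The correct map is the solution-to-solution rescaling $\rho^{\lambda}=\lambda^{\alpha+\gamma-2}\rho(\lambda x,\lambda^{\alpha/\beta}t)$, which multiplies the mass by $\lambda^{\alpha+\gamma-2-n}$; choosing $\lambda$ so the rescaled mass exceeds $M^*$ and then requiring the rescaled $\nu$-moment to be below $\delta$ is exactly what produces the exponent $\frac{\nu}{n+2-\alpha-\gamma}$ in \eqref{BlowupC}. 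Two smaller points: the theorem is also asserted under the hypotheses of Theorem \ref{thm:LpL1} alone, where no weighted bounds are available, so the finiteness and continuity of the moment and the identity \eqref{6.1} must be obtained by the regularization of Proposition \ref{pro:moments} (mollified drift $B_{1/n}$, smooth compactly supported test functions $\varphi\zeta_m$, monotone convergence) rather than by truncating with $|x|^\nu\wedge R^\nu$, which is not $C^2$ and whose fractional Laplacian is not controlled; and before freezing the right-hand side of the master inequality at a negative constant one must first propagate $\omega(t)\le\omega(0)$ via the comparison principle (Lemma \ref{lmm:comparison}, Corollary \ref{cor:gronwall}), since the right-hand side is increasing in $\omega$.
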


The rest of this paper is organized as follows. In Section \ref{sec:preliminary}, we recall some notations, definitions and known results needed later. In Section \ref{sec:fundamentalest}, we construct the $L^r-L^q$ estimates, the time continuity and weighted estimates of the fundamental solutions and solution operators to \eqref{generalized Keller-Segel}. In Section \ref{sec:exisuniq}, we establish the existence and uniqueness of the mild solutions to NFDE \eqref{generalized Keller-Segel} by applying the results derived in Section \ref{sec:fundamentalest}, and show that the integral of the mild solution is  preserved. The nonnegativity preservation property and conservation of mass are discussed in Section \ref{sec:positivity}. In Section \ref{sec:blowup}, the blowup criterion for \eqref{generalized Keller-Segel} are established with the similar approach as the one used in \cite[Theorem 2.3]{BK}.
%================================================================================================================================
\section{Notations and Preliminaries}\label{sec:preliminary}
In this section, first we provide the definition of Caputo derivative based on a convolution group (introduced  in \cite{liliu17,llljg17}) and show some properties related to Caputo derivatives. Then we define the mild solution to \eqref{generalized Keller-Segel}, introduce the notations of some functional spaces and recall some results which will be used later.

\subsection{Caputo derivatives based on a convolution group}

To introduce the generalized definition of Caputo derivatives, let us first present the following notion of limit:
\begin{defn}\label{def:limit}
Let $B$ be a Banach space. For a locally integrable function $u\in L_{\loc}^1((0, T); B)$, if there exists $u_0\in B$ such that
\begin{gather}
\lim_{t\to 0^+}\frac{1}{t}\int_0^t\|u(s)-u_0\|_Bds=0,
\end{gather}
we call $u_0$ the right limit of $u$ at $t=0$, denoted by $u(0+)=u_0$. Similarly, we define $u(T-)$ to be the constant $u_T\in B$ such that
\begin{gather}
\lim_{t\to T^-}\frac{1}{T-t}\int_{t}^T\|u(s)-u_T\|_Bds=0.
\end{gather}
\end{defn}
As in \cite{liliu17}, we use the following distributions $\{g_\beta\}$ as the convolution kernels for $\beta>-1$:
\begin{gather*}
g_{\beta}(t) :=\displaystyle
\begin{cases}
\frac{\theta(t)}{\Gamma(\beta)}t^{\beta-1},& \beta>0,\\
\delta(t), &\beta=0,\\
\frac{1}{\Gamma(1+\beta)}D\left(\theta(t)t^{\beta}\right), & \beta\in (-1, 0).
\end{cases}
\end{gather*}
Here $\theta(t)$ is the standard Heaviside step function and $D$ means the distributional derivative. $g_{\beta}$ can also be defined for $\beta\le -1$ (see \cite{liliu17}) so that these distributions form a convolution group $\mathscr{C}=\{g_{\beta}: \beta\in\mathbb{R}\}$ and consequently we have
\begin{gather}\label{eq:group}
g_{\beta_1}*g_{\beta_2}=g_{\beta_1+\beta_2},~~\forall \beta_1,\beta_2\in\mathbb{R},
\end{gather}
where the convolution between distributions with one-sided bounded supports can be defined as \cite[Chap. 1]{gs64}.  Correspondingly, we have the time-reflected group
\begin{gather*}
\tilde{\mathscr{C}} :=\{\tilde{g}_{\alpha}: \tilde{g}_{\alpha}(t)=g_{\alpha}(-t), \alpha\in \mathbb{R}\}.
\end{gather*}
Clearly, $\supp \tilde{g}\subset(-\infty, 0]$ and for $\gamma\in (0, 1)$, the following equality is true
\begin{gather}
\tilde{g}_{-\gamma}(t)=-\frac{1}{\Gamma(1-\gamma)}D(\theta(-t)(-t)^{-\gamma})=-D\tilde{g}_{1-\gamma}(t),
\end{gather}
where $D$ represents the distributional derivative on $t$.

To define the weak Caputo derivatives valued in general Banach spaces, we first introduce the definition of the right Caputo derivatives of test functions:
\begin{defn}[\cite{llljg17}]\label{def:rightcaputo}
Let $0<\beta<1$. Consider $u\in L_{\loc}^1((-\infty, T); \mathbb{R})$ such that $u$ has a left limit $u(T-)$ at $t=T$ in the sense of Definition \ref{def:limit}. The $\beta$-th order right Caputo derivative of $u$ is a distribution in $\mathscr{D}'(\mathbb{R})$ with support in $(-\infty, T]$, given by
\[
\tilde{D}_{c;T}^{\beta}u :=\tilde{g}_{-\beta}*(\theta(T-t)(u(t)-u(T-))).
\]
\end{defn}

We now introduce the definition of Caputo derivatives for mappings into Banach spaces:
\begin{defn}[\cite{llljg17}]\label{def:weakcap}
Let $B$ be a Banach space and $u\in L_{\loc}^1([0, T); B)$. Let $u_0\in B$. We define the weak Caputo derivative of $u$ associated with initial data $u_0$, denoted by $_0^cD_t^{\beta}u$, to be a bounded linear functional from $C_c^{\infty}(-\infty, T)$ to $B$ such that for any test function $\varphi \in
C_c^{\infty}((-\infty, T); \mathbb{R}) $,
\begin{gather}
\langle _0^cD_t^{\beta}u, \varphi \rangle :=\int_{-\infty}^T (u-u_0)\theta(t) (\tilde{D}_{c; T}^{\beta}\varphi)\, dt
=\int_0^T (u-u_0)\tilde{D}_{c; T}^{\beta}\varphi \, dt.
\end{gather}
We call the weak Caputo derivative $_0^cD_t^{\beta}u$ associated with initial value $u_0$ the Caputo derivative of $u$ (still denoted as $_0^cD_t^{\beta}u$) if $u(0+)=u_0$ in the sense of Definition \ref{def:limit} under the norm of the underlying Banach space $B$.
\end{defn}

The so-defined Caputo derivatives have the following properties
\begin{lem}[\cite{llljg17}]\label{lmm:defconsistency}
$^c_0D_t^{\beta}u$ is supported in $[0, T)$. In other words, for any $\varphi\in C_c^{\infty}(-\infty, T)$ that is zero on $[0, T)$, the action of $^c_0D_t^{\beta}u$ on $\varphi$ is zero. In addition:
\begin{enumerate}[(i)]
\item
If $B=\mathbb{R}^n$, then the Caputo derivative is consistent with the one in \cite{liliu17}, given by
\[
^c_0D_t^{\beta}u=g_{-\beta}*((u-u_0)\theta(t)).
\]
\item
If $u: (0, T)\to B$ is absolutely continuous, then
\[
^c_0D_t^{\beta}u=\frac{1}{\Gamma(1-\beta)}\int_0^t\frac{\dot{u}(s)}{(t-s)^{\beta}} \in L^1((0, T); B).
\]
\item If $u\in C([0, T), B)\cap C^1((0, T), B)$, then  for $t\in (0, T)$, we have
\begin{equation}\label{Integrationby}
\Gamma(1-\beta)(^c_0D_t^\beta u(t))=\frac{u(t)-u(0)}{t^\beta}+\beta\int_0^t\frac{u(t)-u(s)}{(t-s)^{\beta+1}}ds.
\end{equation}
\end{enumerate}
\end{lem}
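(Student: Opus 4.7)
My plan is to prove the support statement first and then parts (i)--(iii) in order, relying throughout on the convolution group $\mathscr{C}$ together with the adjointness of convolution against test functions.

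For the support statement, I would fix $\varphi\in C_c^\infty(-\infty,T)$ that vanishes on $[0,T)$. Then $\varphi(T-)=0$ and $\theta(T-t)\varphi(t)=\varphi(t)$, so Definition \ref{def:rightcaputo} collapses to $\tilde D_{c;T}^\beta\varphi=\tilde g_{-\beta}*\varphi$. Since $\supp\tilde g_{-\beta}\subseteq(-\infty,0]$ and $\supp\varphi\subseteq(-\infty,0)$, the standard inclusion $\supp(f*g)\subseteq\supp f+\supp g$ forces $\tilde D_{c;T}^\beta\varphi$ to vanish on $[0,T)$, so the integrand in Definition \ref{def:weakcap} is identically zero.

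For part (i), with $B=\mathbb{R}^n$, both $f:=(u-u_0)\theta$ and $g_{-\beta}$ have support in $[0,\infty)$, so $g_{-\beta}*f$ is a well-defined distribution by \cite[Ch.~1]{gs64}. I would invoke the adjointness identity $\langle g_{-\beta}*f,\varphi\rangle=\langle f,\tilde g_{-\beta}*\varphi\rangle$; as above $\tilde g_{-\beta}*\varphi=\tilde D_{c;T}^\beta\varphi$ because $\varphi(T-)=0$, and the right-hand side is precisely the pairing in Definition \ref{def:weakcap}, so the two definitions agree. For part (ii), absolute continuity on $[0,T)$ lets me write $(u-u_0)\theta(t)=g_1*(\dot u\,\theta)$ as a Bochner identity. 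Combining the Banach-valued version of (i) (tested coordinatewise against elements of the dual) with the group law \eqref{eq:group} gives
\begin{gather*}
{}^c_0D_t^\beta u=g_{-\beta}*(g_1*(\dot u\,\theta))=g_{1-\beta}*(\dot u\,\theta).
\end{gather*}
Because $g_{1-\beta}(t)=\theta(t)t^{-\beta}/\Gamma(1-\beta)\in L^1_{\loc}$ for $\beta\in(0,1)$, this is a genuine Bochner convolution producing the claimed formula, and Young's inequality for Bochner convolutions yields membership in $L^1((0,T);B)$.

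The main obstacle is part (iii), since $u\in C([0,T),B)\cap C^1((0,T),B)$ need not be absolutely continuous at $0$, so (ii) cannot be applied directly. My plan is to verify the identity distributionally: denote the proposed right-hand side of (iii) by $w(t)$, which is well defined for $t\in(0,T)$ because the $C^1$ hypothesis forces $\|u(t)-u(s)\|_B=O(t-s)$ near $s=t$, making the singular kernel $(t-s)^{-\beta-1}$ integrable. I will then test $w$ against $\varphi\in C_c^\infty(-\infty,T)$: substitute $\tilde g_{-\beta}=-D\tilde g_{1-\beta}$ to rewrite $\tilde D_{c;T}^\beta\varphi=-\tilde g_{1-\beta}*\dot\varphi$, apply Fubini to interchange integration orders in $\int_0^T(u-u_0)\tilde D_{c;T}^\beta\varphi\,dt$, and recover $\Gamma(1-\beta)\int_0^T w(t)\varphi(t)\,dt$ after an integration by parts in the inner variable (the boundary contribution producing $(u(t)-u(0))/t^\beta$). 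Justifying Fubini on the resulting doubly singular integral is the delicate step; I plan to split the region into $|t-s|<\epsilon$ versus $|t-s|\geq\epsilon$, controlling the former by $C^1$ regularity of $u$ and the latter by mere continuity, and then let $\epsilon\to 0^+$ via dominated convergence.
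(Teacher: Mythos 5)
The paper itself gives no proof of this lemma; it is quoted verbatim from \cite{llljg17}, so there is no in-paper argument to compare against. Judged on its own merits, your reconstruction is essentially correct and follows the natural route: the support claim and part (i) via the adjointness $\langle g_{-\beta}*f,\varphi\rangle=\langle f,\tilde g_{-\beta}*\varphi\rangle$ together with the observation that $\tilde g_{-\beta}*\varphi=\tilde D_{c;T}^{\beta}\varphi$ whenever $\varphi(T-)=0$; part (ii) via $(u-u_0)\theta=g_1*(\dot u\,\theta)$ and the group law $g_{-\beta}*g_1=g_{1-\beta}$; and part (iii) by a distributional computation that does not presuppose absolute continuity up to $t=0$. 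You also correctly identify that (iii) cannot be reduced to (ii), since $\dot u$ need not be integrable near $0$ for $u\in C([0,T),B)\cap C^1((0,T),B)$.

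Two refinements are worth making in (iii). First, you propose testing against $\varphi\in C_c^{\infty}(-\infty,T)$, but the identity \eqref{Integrationby} is asserted only for $t\in(0,T)$, and under the stated hypotheses the right-hand side need not be locally integrable up to $t=0$ (the term $\beta\int_0^t\|u(t)-u(s)\|(t-s)^{-\beta-1}ds$ can blow up faster than $t^{-1}$ as $t\to0^+$ when $\dot u$ is wildly unbounded near the origin); you should therefore restrict to $\varphi\in C_c^{\infty}((0,T))$, which identifies the distribution on $(0,T)$ and is all the lemma claims. Second, the Fubini step is cleaner if you perform it \emph{before} any singular integration by parts: writing $\tilde D_{c;T}^{\beta}\varphi=-\tilde g_{1-\beta}*\dot\varphi$ and exchanging orders immediately gives $\langle{}^c_0D_t^{\beta}u,\varphi\rangle=-\int_0^T\dot\varphi(t)\,G(t)\,dt$ with $G=g_{1-\beta}*((u-u_0)\theta)$, where the only kernel involved is the integrable $(s-t)^{-\beta}$; it then remains to check that $G\in C^1((0,T))$ with
\begin{equation*}
\Gamma(1-\beta)\,G'(t)=\frac{u(t)-u(0)}{t^{\beta}}+\beta\int_0^t\frac{u(t)-u(s)}{(t-s)^{\beta+1}}\,ds,
\end{equation*}
which follows by splitting $\Gamma(1-\beta)G(t)=\int_0^t(t-s)^{-\beta}(u(s)-u(t))\,ds+(u(t)-u(0))\tfrac{t^{1-\beta}}{1-\beta}$ and differentiating each piece (your $\epsilon$-truncation handles the first). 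With these adjustments the argument is complete.
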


\begin{rem}
If $T<\infty$, $g_{-\gamma}*u$ should be understood as the restriction of the convolution onto $\mathscr{D}'(-\infty,T)$. One can refer to \cite{liliu17} for the technical details. Actually, if one defines the convolution between $g_{\alpha}$ and $u$ suitably, the claim in Part (i)
of Lemma \ref{lmm:defconsistency} still holds for general Banach spaces.
\end{rem}

In general, $_0^cD_t^{\beta}u$ is an abstract functional from $C_c^{\infty}((-\infty, T); \mathbb{R})$ to $B$. We say $_0^cD_t^{\beta}u \in L_{\loc}^1([0, T); B)$ if there exists a function $f\in L_{\loc}^1([0, T); B)$ such that for any $\varphi \in C_c^{\infty}((-\infty, T); \mathbb{R})$, we have
\begin{gather}
\langle _0^cD_t^{\beta}u, \varphi \rangle =\int_0^T f(t)\varphi(t)\, dt.
\end{gather}
In this case, we will identify $_0^cD_t^{\beta}u$ with $f$.  With this notion, we have the following claims from \cite{liliu17,llljg17}:
\begin{lem}\label{lmm:volterra}
Let $\beta \in (0, 1)$.
\begin{enumerate}[(i)]
\item Let $u\in L_{\loc}^1((0, T); \mathbb{R})$. Assume that the weak Caputo derivative for an assigned initial value $u_0\in \mathbb{R}$ is $_0^cD_t^{\beta}u$. As linear functionals on $C_c^{\infty}((-\infty, T); \mathbb{R})$, we have
\begin{gather}
(u-u_0)\theta(t)=g_{\beta}*( _0^cD_t^{\beta}u).
\end{gather}

\item If $ f(t):=(_0^cD_t^{\beta}u)\in L_{\loc}^1([0, T); B)$, then
\[
u(t)=u_0+\frac{1}{\Gamma(\gamma)}\int_0^t(t-s)^{\gamma-1} f(s)\, ds, ~a.e.~\text{on }(0, T).
\]
where the integral is understood as the Lebesgue integral.
\end{enumerate}
\end{lem}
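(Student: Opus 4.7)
The plan is to reduce both parts of the lemma to the group identity $g_\beta * g_{-\beta} = g_0 = \delta$ from \eqref{eq:group}, together with a clean rewriting of the right-Caputo derivative of a test function. Since all the distributions in play have support bounded below at $0$, the convolution algebra on $\mathscr{D}'((-\infty, T))$ developed in \cite[Chap.~1]{gs64} applies directly, and the whole argument is an unpacking of definitions plus one associative-convolution step.

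For part (i), I first simplify $\tilde{D}_{c;T}^\beta\varphi$ for a fixed test function $\varphi\in C_c^\infty((-\infty, T);\mathbb{R})$. Because $\varphi$ has compact support strictly inside $(-\infty, T)$, we have $\varphi(T-)=0$ and $\theta(T-t)\varphi(t)=\varphi(t)$, hence $\tilde{D}_{c;T}^\beta\varphi=\tilde{g}_{-\beta}*\varphi$. Plugging into Definition \ref{def:weakcap} and using the standard duality between convolution by $\tilde{g}_{-\beta}$ acting on test functions and convolution by $g_{-\beta}$ acting on distributions (valid here since $(u-u_0)\theta(t)$ is supported in $[0,T)$ so only the half-line part of $\tilde{g}_{-\beta}$ is seen), I get
\[
\langle {_0^c}D_t^\beta u,\varphi\rangle
=\int_0^T (u-u_0)\,(\tilde{g}_{-\beta}*\varphi)\,dt
=\langle g_{-\beta}*((u-u_0)\theta(t)),\varphi\rangle.
\]
This yields the distributional identity ${_0^c}D_t^\beta u=g_{-\beta}*((u-u_0)\theta(t))$ on $\mathscr{D}'((-\infty,T))$. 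Convolving both sides on the left with $g_\beta$ and invoking associativity together with $g_\beta*g_{-\beta}=g_0=\delta$ from \eqref{eq:group} gives $g_\beta*{_0^c}D_t^\beta u=(u-u_0)\theta(t)$, which is exactly the claim of (i).

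For part (ii), the task is to upgrade the distributional equality of (i) to an a.e.\ pointwise identity in $B$. Under the hypothesis $f:={_0^c}D_t^\beta u\in L_{\loc}^1([0,T);B)$, and because $g_\beta(t)=\theta(t) t^{\beta-1}/\Gamma(\beta)$ is locally integrable on $[0,\infty)$ for $\beta\in(0,1)$, Young's inequality (for the Bochner convolution) shows that $g_\beta*f$ is represented by the locally integrable function
\[
(g_\beta*f)(t)=\frac{1}{\Gamma(\beta)}\int_0^t (t-s)^{\beta-1} f(s)\,ds,\quad\text{a.e. }t\in(0,T).
\]
Since the left-hand side $(u-u_0)\theta(t)$ of the identity from part (i) is also represented by a locally integrable $B$-valued function, the two locally integrable representatives must agree a.e., producing the stated formula. (The exponent in the statement should read $\beta$, not $\gamma$.) The main subtlety, and what would require the most care in a full write-up, is precisely the adjoint/duality move in part (i): one must justify that pairing $(u-u_0)\theta(t)$ against $\tilde{g}_{-\beta}*\varphi$ coincides with pairing $g_{-\beta}*((u-u_0)\theta(t))$ against $\varphi$ despite $\tilde{g}_{-\beta}$ having unbounded support to the left, and this is where the one-sided-support convolution framework of \cite{gs64} (and the consistency statement in Lemma \ref{lmm:defconsistency}(i)) does the real work.
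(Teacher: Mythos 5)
Your proof is correct, and it is precisely the argument the paper has in mind: the paper does not actually prove Lemma \ref{lmm:volterra} (it cites \cite{liliu17,llljg17} and says only that ``the above claims follow from the properties of the underlying convolution group $\mathscr{C}$''), and your reduction --- identify $_0^cD_t^{\beta}u$ with $g_{-\beta}*((u-u_0)\theta(t))$ via the duality step (the content of Lemma \ref{lmm:defconsistency}(i)), then invert with $g_{\beta}*g_{-\beta}=\delta$ and associativity in the one-sided convolution algebra --- is exactly that route, and your observation that $\gamma$ should read $\beta$ in part (ii) is right. The only point to make explicit in a full write-up is that part (ii) invokes the identity of part (i) for $B$-valued $u$ while (i) is stated only for $B=\mathbb{R}$; this extension is what the remark following Lemma \ref{lmm:defconsistency} asserts, and your Bochner-convolution formulation handles it cleanly.
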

The above claims follow from the properties of the underlying convolution group $\mathscr{C}$. The generalized definition in \cite{liliu17,llljg17} has the following theoretic convenience: it only requires the functions to be locally integrable with a certain initial value, which allows the study of solutions of fractional PDEs in very weak sense. There is no requirement on the time regularity of the functions to obtain the claims in  Lemma \ref{lmm:volterra}.

Using Lemma \ref{lmm:volterra}, we are able to conclude the following results:
\begin{lem}
Let $0<\beta<1,T>0$. Suppose $u\in L_{\loc}^1(0, T; B)$ and there is an  assignment of initial value $u_0\in B$ such that $f:= (^c_0D_t^{\beta}u) \in L^p(0,T)$ with $\frac{1}{\alpha}<p\leq\infty$. Then $u$ is continuous and for any $s,t\in[0,T]$ with $s<t$, we have
\begin{equation}\label{estimateFED1}
|u(t)-u(s)|\leq \frac{2}{\Gamma(\beta)}\left(\frac{p-1}{\beta p-1}\right)^{1-\frac{1}{p}}(t-s)^{\beta-\frac{1}{p}}\|f\|_p .
\end{equation}
\end{lem}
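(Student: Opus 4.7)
The plan is to start from the Volterra representation provided by Lemma~\ref{lmm:volterra}(ii): since $f={}^c_0D_t^\beta u\in L^p(0,T)$ and $p>1/\beta$ guarantees local integrability of the kernel against $f$, one has
\[
u(t)=u_0+\frac{1}{\Gamma(\beta)}\int_0^t(t-\tau)^{\beta-1}f(\tau)\,d\tau
\]
almost everywhere. The estimate \eqref{estimateFED1} will then be obtained by a direct Hölder-type analysis of $u(t)-u(s)$ for $0\le s<t\le T$, and continuity will follow from that estimate (the right-hand side vanishes as $t-s\to 0$).

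For the Hölder bound I would split
\[
\Gamma(\beta)(u(t)-u(s))=\int_s^t(t-\tau)^{\beta-1}f(\tau)\,d\tau+\int_0^s\bigl[(t-\tau)^{\beta-1}-(s-\tau)^{\beta-1}\bigr]f(\tau)\,d\tau.
\]
Let $p'=p/(p-1)$ be the conjugate exponent (with the usual convention when $p=\infty$). To the first integral I apply Hölder, noting that the assumption $p>1/\beta$ is exactly what makes $(\beta-1)p'>-1$, so that
\[
\int_s^t(t-\tau)^{(\beta-1)p'}\,d\tau=\frac{p-1}{\beta p-1}(t-s)^{(\beta p-1)/(p-1)},
\]
which after raising to the power $1/p'$ gives the claimed factor $\left(\frac{p-1}{\beta p-1}\right)^{1-1/p}(t-s)^{\beta-1/p}$.

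The main obstacle is the second integral, where I must control the $L^{p'}$-norm of the kernel difference $(s-\tau)^{\beta-1}-(t-\tau)^{\beta-1}$ (which is nonnegative since $\beta<1$). Here the key step is the elementary inequality $(b-a)^{p'}\le b^{p'}-a^{p'}$ valid for $0\le a\le b$ and $p'\ge 1$ (a one-variable calculus check via the auxiliary function $b^{p'}-a^{p'}-(b-a)^{p'}$). Applying it pointwise in $\tau$, the estimate reduces to
\[
\frac{p-1}{\beta p-1}\Bigl[s^{q}-t^{q}+(t-s)^{q}\Bigr],\qquad q:=\frac{\beta p-1}{p-1}\in(0,1),
\]
where the subadditivity $s^q+(t-s)^q\ge t^q$ for exponents $q\in(0,1)$ (and the condition $\beta<1$ is precisely what puts $q$ in this range) bounds the bracket by $(t-s)^q$. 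Taking the $1/p'$ power and applying Hölder again produces the same factor as for the first piece, giving the prefactor $2$ in \eqref{estimateFED1}. The case $p=\infty$ is handled identically with $p'=1$, directly integrating the kernel difference. Finally, the estimate shows $u$ is Hölder continuous with exponent $\beta-1/p>0$, hence (after possibly modifying $u$ on a null set) continuous on $[0,T]$.
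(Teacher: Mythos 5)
Your proposal is correct and follows essentially the same route as the paper: the same splitting of $\Gamma(\beta)(u(t)-u(s))$ into the tail integral over $[s,t]$ and the kernel-difference integral over $[0,s]$, Hölder's inequality on each piece, and the same elementary inequality $(b-a)^{p'}\le b^{p'}-a^{p'}$ to reduce the second piece to an explicitly computable integral bounded by $\frac{p-1}{\beta p-1}(t-s)^{\beta p'-p'+1}$. The only cosmetic difference is that the paper first normalizes via the substitution $s-\tau=\xi(t-s)$ before evaluating that integral, whereas you integrate directly; the $p=\infty$ case and the continuity conclusion are handled identically.
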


\begin{proof}
{\it Case 1. $1<p<\infty$.} For $s<t, 0<\beta<1$, one has
\begin{eqnarray}
\begin{aligned}\label{Lp}
&|u(t)-u(s)|\\
=&
\frac{1}{\Gamma(\beta)}\left|\int_0^s\Big((t-\tau)^{\beta-1}-(s-\tau)^{\beta-1}\Big)f(\tau)d\tau+\int_s^t(t-\tau)^{\beta-1}f(\tau)d\tau\right|\\
\leq&\frac{1}{\Gamma(\beta)}\left(\int_0^s\Big((s-\tau)^{\beta-1}-(t-\tau)^{\beta-1}\Big)|f(\tau)|d\tau+\int_s^t(t-\tau)^{\beta-1}|f(\tau)|d\tau\right)\\
\equiv&\frac{1}{\Gamma(\beta)}(I_1+I_2).
\end{aligned}
\end{eqnarray}
It follows from H\"{o}lder's inequality with $\frac{1}{q}+\frac{1}{p}=1$ that
\begin{align*}
I_1\leq\Big(\int_0^s\big((s-\tau)^{\beta-1}-(t-\tau)^{\beta-1}\big)^qd\tau\Big)^{\frac{1}{q}}\|f\|_p.
\end{align*}
Substitution $s-\tau=\xi(t-s)$ gives that
\begin{align*}
  \left(\int_0^s\Big((s-\tau)^{\beta-1}-(t-\tau)^{\beta-1}\Big)^q d\tau\right)^\frac{1}{q}=(t-s)^{\frac{1}{q}+(\beta-1)}\left(\int_0^\frac{s}{t-s}\Big(\xi^{\beta-1}-(1+\xi)^{\beta-1}\Big)^q d\xi\right)^\frac{1}{q}\!\!.
\end{align*}
If $(\beta-1)q+1>0$, which is equivalent to $p>\frac{1}{\beta}$, it follows from the elementary inequality $(a-b)^r\leq a^r-b^r$ for all $a\geq b \geq0$ and $r\geq1$ that
\begin{align*}
  &\int_0^\frac{s}{t-s}\Big(\xi^{\beta-1}-(1+\xi)^{\beta-1}\Big)^q d\xi
  \leq \int_0^\frac{s}{t-s}\Big(\xi^{(\beta-1)q}-(1+\xi)^{(\beta-1)q}\Big)d\xi\\
  =&\frac{1}{(\beta-1)q+1}\left(\left(\frac{s}{t-s}\right)^{(\beta-1)q+1}-\left(1+\frac{s}{t-s}\right)^{(\beta-1)q+1}+1\right)\\
  &\leq \frac{1}{(\beta-1)q+1}.
\end{align*}
Consequently,
\begin{align}
  I_1\leq(t-s)^{\frac{1}{q}+(\beta-1)}\Big((\beta-1)q+1\Big)^{-\frac{1}{q}}\|f\|_p.
\end{align}
Applying H\"{o}lder's inequality again and then computing the integral directly, one has
\begin{eqnarray}
\begin{aligned}\label{tao}
I_2\leq \Big(\int_s^t\big((t-\tau)^{(\beta-1)q}d\tau\Big)^{\frac{1}{q}}\|f\|_p=(t-s)^{\frac{1}{q}+(\beta-1)}\Big((\beta-1)q+1\Big)^{-\frac{1}{q}}\|f\|_p.
\end{aligned}
\end{eqnarray}
Therefore, we have from \eqref{Lp}-\eqref{tao} and $\frac{1}{q}+\frac{1}{p}=1$ that \eqref{estimateFED1} holds in this case.

{\it Case 2. $p=\infty$.}
With similar argument as \eqref{Lp}, one deduce that
\begin{eqnarray}
\begin{aligned}\label{Lp2}
&|u(t)-u(s)|\\
&\leq\frac{1}{\Gamma(\beta)}\left(\int_0^s\Big((s-\tau)^{\beta-1}-(t-\tau)^{\beta-1}\Big)d\tau
\|f\|_\infty+\int_s^t(t-\tau)^{\beta-1}d\tau\|f\|_\infty\right)\\
&=\frac{1}{\Gamma(1+\beta)}\Big(s^\beta-t^\beta+2(t-s)^\beta\Big)\|f\|_\infty\\
&\leq \frac{2}{\Gamma(1+\beta)}(t-s)^\beta\|f\|_\infty.
\end{aligned}
\end{eqnarray}
Hence \eqref{estimateFED1} with $p=\infty$ holds.
\end{proof}

The following comparison principle for Caputo derivative will play an important role in the proof of the nonnegativity and the study of finite time blowup of the solution to (1.1).
\begin{lem}\cite{liliu17}\label{lmm:comparison}
Let $0<\beta<1, T>0$. Assume that $v(t)\in C([0,T]; \mathbb{R})$.
Suppose $f(t, x)$ is a continuous function, locally Lipschitz in $x$, such that $\forall t\ge 0$, $x\le y$ implies $f(t, x)\le f(t, y)$.
If $f(t, v)-(^c_0D_t^{\gamma}v)$ is a nonnegative distribution, then $v\le u$
for $t\in [0, \min(T, T_b))$, where $u$ is the solution to the following ODE
\[
^c_0D_t^{\beta}u=f(t, u),~u(0)=v(0)
\]
and $T_b$ is the largest existence time for $u$.
\end{lem}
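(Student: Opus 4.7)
The plan is to convert the distributional subsolution inequality for $v$ into a classical Volterra integral inequality, match it against the Volterra equation for $u$, and close by a fractional Gronwall argument. The structural fact I exploit is that $g_\beta \ge 0$ is one-sidedly supported, so convolution with $g_\beta$ preserves nonnegativity of distributions supported in $[0,T)$. Concretely, I read the hypothesis as the statement that the distribution $f(\cdot, v(\cdot))\theta(\cdot) - {}^c_0D_t^{\beta}v$ is nonnegative on $(-\infty, T)$; both summands are supported in $[0,T)$, and $f(\cdot, v)$ is continuous since $v$ and $f$ are. Convolving with $g_\beta$ and applying Lemma \ref{lmm:volterra}(i), which identifies $g_\beta * ({}^c_0D_t^\beta v) = (v - v(0))\theta(t)$, yields the pointwise estimate
\begin{equation*}
v(t) \le v(0) + \frac{1}{\Gamma(\beta)}\int_0^t (t-s)^{\beta-1} f(s, v(s))\,ds, \qquad t \in [0, T).
\end{equation*}
By Lemma \ref{lmm:volterra}(ii), the ODE $^c_0D_t^\beta u = f(t, u)$ is equivalent to the equality version of the same relation with $u$ in place of $v$.

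Next, I fix $T' \in (0, \min(T, T_b))$, and let $L$ be a Lipschitz constant for $f$ on $[0, T'] \times K$, where $K$ is a compact interval containing the ranges of $u$ and $v$ on $[0, T']$. Subtracting the two integral relations and writing $V := v - u$ (recall $v(0) = u(0)$) gives
\begin{equation*}
V(t) \le \frac{1}{\Gamma(\beta)} \int_0^t (t-s)^{\beta-1}\bigl[f(s, v(s)) - f(s, u(s))\bigr]\,ds.
\end{equation*}
When $v(s) \le u(s)$, the monotonicity hypothesis gives $f(s, v) - f(s, u) \le 0 \le L V^+(s)$; when $v(s) > u(s)$, the Lipschitz hypothesis gives $f(s, v) - f(s, u) \le L(v(s) - u(s)) = L V^+(s)$. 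Either way the integrand is dominated by $L V^+(s)$, the right-hand side becomes nonnegative, and taking positive parts preserves the inequality, yielding
\begin{equation*}
V^+(t) \le \frac{L}{\Gamma(\beta)} \int_0^t (t-s)^{\beta-1} V^+(s)\,ds.
\end{equation*}

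Iterating this fractional Volterra inequality $n$ times and evaluating the resulting iterated kernels by the Beta-function identity produces the bound
\begin{equation*}
V^+(t) \le \|V^+\|_{L^\infty(0, T')}\,\frac{(L\,t^{\beta})^n}{\Gamma(1 + n\beta)},
\end{equation*}
whose right-hand side tends to zero as $n\to\infty$ by the super-exponential growth of $\Gamma(1+n\beta)$. Hence $V^+ \equiv 0$ on $[0, T']$, and since $T' \in (0, \min(T, T_b))$ is arbitrary, $v \le u$ on $[0, \min(T, T_b))$. The delicate point in the whole scheme is the very first step: because ${}^c_0D_t^\beta v$ is only a distribution, not a function, the subsolution inequality cannot be read pointwise, and the argument rests on justifying the convolution with $g_\beta$ and invoking the group identity \eqref{eq:group} to realize $g_\beta * ({}^c_0D_t^\beta v)$ as $(v-v(0))\theta(t)$. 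Once the Volterra form is in hand, the downstream fractional Gronwall calculation is routine.
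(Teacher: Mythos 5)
Your proof is correct. Note first that the paper itself does not prove this lemma --- it imports it by citation from \cite{liliu17} (and the remark following it defers a strengthened version to \cite{fllx18note}) --- so there is no in-paper argument to compare against; what you have written is a legitimate self-contained derivation built only on tools the paper does supply. The two delicate points are both handled properly: (a) the passage from the distributional inequality to the pointwise Volterra inequality is justified because $g_\beta\ge 0$ is supported in $[0,\infty)$, so testing $g_\beta * w$ against $\varphi\ge 0$ supported in $(-\infty,T)$ reduces to testing $w$ against the nonnegative function $\tilde g_\beta*\varphi$, and the resulting distribution $\frac{1}{\Gamma(\beta)}\int_0^t(t-s)^{\beta-1}f(s,v(s))\,ds-(v(t)-v(0))\theta(t)$ is continuous on $[0,T)$, so its nonnegativity as a distribution is genuine pointwise nonnegativity; (b) the case split in the Gronwall step does genuinely need the monotonicity of $f$ (when $v(s)\le u(s)$ the Lipschitz bound alone would only give $f(s,v)-f(s,u)\le L\,V^-(s)$, which does not close the estimate), and you invoke it exactly there, consistently with the lemma's hypotheses. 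The iterated-kernel bound $V^+\le \|V^+\|_\infty (Lt^\beta)^n/\Gamma(1+n\beta)$ follows from the semigroup identity $g_\beta*g_\beta=g_{2\beta}$ and the positivity of $g_\beta$, and the uniform Lipschitz constant on $[0,T']\times K$ exists by compactness since $u,v$ are continuous up to any $T'<\min(T,T_b)$. I have no objections.
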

\begin{rem}
Indeed, the monotonicity in $x$ for $f(t,x)$ can be removed. The proof will be presented in a forthcoming short note \cite{fllx18note}.
\end{rem}

For real numbers $\beta,\gamma$, the Mittag-Leffler function $E_{\beta,\gamma}: \mathbb{C}\to \mathbb{C}$ is defined by
\begin{equation}\label{ML}
E_{\beta,\gamma}(z)=\sum_{n=0}^{\infty} \frac{z^n}{\Gamma(n\beta+\gamma)}.
\end{equation}
We will denote
\begin{gather}
E_{\beta}(z):=E_{\beta,1}(z).
\end{gather}
If $\beta>0$ and $\gamma>0$, $E_{\beta,\gamma}(z)$ is an entire function.

For function $u\in L_{\loc}^1(0,T; \mathbb{R})$ with $u(0+)=u_0$ and polynomial growth in  time, \cite{liliu17} shows that the Laplace transform of the weak Caputo derivative is still given by:
\begin{gather}
\mathcal{L}(^c_0D_t^{\beta}u)=s^{\beta}\mathcal{L}(u)-u_0s^{\beta-1},
\end{gather}
and the solution to the ODE
\[
^c_0D_t^{\beta}u=\lambda u+f(t),~~u(0)=u_0
\]
is given by
\begin{gather}\label{eq:solutionode}
u(t)=u_0E_{\beta}(\lambda t^{\beta})+\beta \int_0^t s^{\beta-1}
E_{\beta}'(\lambda t^{\beta}) f(t-s)\,ds.
\end{gather}

As a corollary of Lemma \ref{lmm:comparison}, we have
\begin{cor}\label{cor:gronwall}
Let $0<\beta<1, T>0$. Assume that $u(t)\in C([0,T]; \mathbb{R})$.  If $^c_0D_t^{\beta}u \leq a+bu(t)$ for some $b\ge 0$ (the inequality means $a+bu(t)-(^c_0D_t^{\beta}u)$ is a non-negative distribution), then, when $b=0$,
\begin{equation}\label{AKS}
u(t)\leq u(0)+\frac{at^\beta}{\Gamma(\beta+1)};
\end{equation}
when $b\neq 0$,
\begin{equation}\label{IN}
u(t)\leq  u(0)E_\beta(bt^\beta)+\frac{a}{b}\big(E_\beta(bt^\beta)-1\big).
\end{equation}
\end{cor}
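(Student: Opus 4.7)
The plan is to reduce the inequality to an equality via the comparison principle (Lemma \ref{lmm:comparison}) and then solve the resulting linear fractional ODE explicitly. The comparison principle is tailor-made for this: set $f(t,x)=a+bx$, which is continuous, globally Lipschitz in $x$, and nondecreasing in $x$ precisely because $b\ge 0$, so all the hypotheses of Lemma \ref{lmm:comparison} are satisfied. The assumption that $a+bu(t)-(^c_0D_t^{\beta}u)$ is a nonnegative distribution is exactly the input the comparison principle needs.

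So first I would introduce the comparison ODE
\[
{}^c_0D_t^{\beta}v=a+bv,\qquad v(0)=u(0),
\]
and argue that it has a global-in-time solution $v\in C([0,\infty);\mathbb{R})$; this lets me take $T_b=\infty$ when applying Lemma \ref{lmm:comparison}, yielding $u(t)\le v(t)$ for all $t\in[0,T]$. Global existence is immediate once we exhibit the explicit formula for $v$ below (the Mittag-Leffler function $E_\beta$ is entire).

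Second, I would compute $v(t)$ in closed form. When $b=0$, the equation is ${}^c_0D_t^{\beta}v=a$ and Part~(ii) of Lemma \ref{lmm:volterra} with $f\equiv a$ gives
\[
v(t)=u(0)+\frac{1}{\Gamma(\beta)}\int_0^t(t-s)^{\beta-1}a\,ds=u(0)+\frac{at^{\beta}}{\Gamma(\beta+1)},
\]
which is exactly \eqref{AKS}. When $b\ne 0$, I would take the Laplace transform using the identity $\mathcal{L}({}^c_0D_t^{\beta}v)=s^{\beta}\mathcal{L}(v)-u(0)s^{\beta-1}$ recalled in the text, obtaining
\[
\widehat{v}(s)=\frac{u(0)\,s^{\beta-1}}{s^{\beta}-b}+\frac{a}{s(s^{\beta}-b)}
=\frac{u(0)\,s^{\beta-1}}{s^{\beta}-b}+\frac{a}{b}\!\left(\frac{s^{\beta-1}}{s^{\beta}-b}-\frac{1}{s}\right),
\]
by elementary partial fractions. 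Inverting, using the standard identity $\mathcal{L}^{-1}\!\left(\tfrac{s^{\beta-1}}{s^{\beta}-b}\right)(t)=E_{\beta}(bt^{\beta})$, gives
\[
v(t)=u(0)E_{\beta}(bt^{\beta})+\frac{a}{b}\bigl(E_{\beta}(bt^{\beta})-1\bigr),
\]
which is \eqref{IN}. Combining with $u\le v$ finishes the proof.

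The only step that requires a bit of care is the Laplace inversion: the identity $\mathcal{L}(^c_0D_t^{\beta}u)=s^{\beta}\mathcal{L}(u)-u_0s^{\beta-1}$ was stated under a polynomial growth assumption, so I would either check that the constructed $v$ has polynomial (in fact at most exponential-in-$t^{\beta}$) growth a priori—which follows from a Picard iteration on $v=u(0)+g_{\beta}*(a+bv)$—or, alternatively, bypass the transform entirely by substituting the series for $E_{\beta}$ into the Volterra form of Lemma \ref{lmm:volterra}(ii) and verifying termwise that the candidate solves the ODE. Either route is routine; the substantive content of the corollary is entirely carried by Lemma \ref{lmm:comparison}, and the rest is explicit computation.
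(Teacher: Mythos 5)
Your proof is correct and follows essentially the same route the paper intends: the corollary is stated immediately after Lemma \ref{lmm:comparison} and the explicit solution formula \eqref{eq:solutionode} for the linear fractional ODE, so the intended argument is exactly "compare with the solution of $^c_0D_t^{\beta}v=a+bv$, $v(0)=u(0)$, then write that solution explicitly." Your Laplace-transform/partial-fraction derivation of $v(t)$ (and the $b=0$ case via Lemma \ref{lmm:volterra}(ii)) is just a self-contained re-derivation of \eqref{eq:solutionode} specialized to constant $f\equiv a$, and your attention to the growth hypothesis for the transform is a reasonable extra precaution.
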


\subsection{Basic setup: definitions, notations and preliminary lemmas}

Denote
\begin{gather}
A=(-\triangle)^{\frac{\alpha}{2}}
\end{gather}
and $ E_\beta(-t^\beta A)$ is the linear operator defined by operator calculus. Following \cite{Taylor}, formally taking the Laplace transform of \eqref{generalized Keller-Segel}, we can find that $\rho$ satisfies the following Duhamel type integral equation (similar to \eqref{eq:solutionode}) though the equation \eqref{generalized Keller-Segel} is non-Markovian:
\begin{gather}\label{solutiontoequation}
\begin{split}
 \rho(x,t)&=E_\beta(-t^\beta A)\rho_0-\beta\int_0^t(t-\tau)^{\beta-1}E'_\beta(-(t-\tau)^\beta A)(\nabla\cdot(\rho B (\rho))(\tau)d\tau\\
 &=E_\beta(-t^\beta A)\rho_0-\int_0^t(t-\tau)^{\beta-1}E_{\beta,\beta}(-(t-\tau)^\beta A)(\nabla\cdot(\rho B (\rho))(\tau)d\tau,
 \end{split}
\end{gather}
where we have used the well-known fact:
\[
\beta E_{\beta}'(z)=E_{\beta,\beta}(z).
\]
This formal computation then motivates the definition of the mild solution as follows:
\begin{defn}\label{def:mild}
Let $X$ be a Banach space over space and time. We call $\rho\in X$ is a mild solution to \eqref{generalized Keller-Segel} if $\rho$ satisfies the integral equation \eqref{solutiontoequation} in $X$.
\end{defn}

Let us now clarify the notations for the spaces which will be used later in this paper.
For $1\leq p\leq\infty$, we use $\|u\|_p$ to denote the $L^p$-norm of a Lebesgue measurable function $u$ in  $L^p(\mathbb{R}^n)$ space. Recall that the weak $L^p$ norm is defined by
\begin{gather}
\|u\|_{L^{p,\infty}}=\sup_{\lambda>0}\{\lambda d_u(\lambda)^{\frac{1}{p}}\},~~d_u(\lambda)=|\{x: |u(x)|>\lambda \}|.
\end{gather}

The Hardy-Littlewood-Sobolev inequality for $L^p$ spaces is listed in the following lemma.
\begin{lem}\label{lmm:hls}
\cite[ page 119, Theorem 1]{Stein}
Let $0<\ell<n$, $1<p<q<\infty$ and $\frac{1}{q}=\frac{1}{p}-\frac{\ell}{n}$. Then
\begin{equation}\label{HLS}
\|\int_{R^n}\frac{f(y)}{|x-y|^{n-\ell}}dy\|_q\leq C\|f\|_p,
\end{equation}
holds for all $f\in L^p(\mathbb{R}^n)$.
\end{lem}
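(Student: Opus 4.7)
The plan is to follow the classical strategy: establish the weak-type $(p,q)$ bound for the Riesz potential operator $T_\ell f(x) := \int_{\mathbb{R}^n} |x-y|^{\ell-n} f(y)\, dy$ using a kernel splitting against the Hardy-Littlewood maximal function, and then upgrade to the strong-type $(p,q)$ bound via Marcinkiewicz interpolation. Throughout I would use the standing hypothesis $1/q = 1/p - \ell/n$ with $1 < p < q < \infty$, which is equivalent to $1 < p < n/\ell$.

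For the weak-type estimate, I would fix $R>0$ (to be optimized later) and split
\[
T_\ell f(x) = \int_{|x-y|<R} |x-y|^{\ell-n}f(y)\,dy + \int_{|x-y|\ge R} |x-y|^{\ell-n}f(y)\,dy =: I_R(x)+J_R(x).
\]
For $I_R$, I would decompose $B(x,R)$ into dyadic annuli $\{2^{-k-1}R \le |x-y| < 2^{-k}R\}$, note that on each annulus $|x-y|^{\ell-n}$ is essentially $(2^{-k}R)^{\ell-n}$, bound the integral over the annulus by $C(2^{-k}R)^n \cdot Mf(x)$, and sum the geometric series in $k$ to obtain $|I_R(x)|\le C R^\ell Mf(x)$, where $M$ is the Hardy-Littlewood maximal function. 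For $J_R$, Hölder's inequality with conjugate exponent $p'$ gives $|J_R(x)|\le \|f\|_p \bigl(\int_{|z|\ge R}|z|^{(\ell-n)p'}dz\bigr)^{1/p'}$; the restriction $p<n/\ell$ makes $(\ell-n)p'+n<0$, so the integral converges and equals $C R^{\ell - n/p}$.

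Then I would choose $R$ so that the second term equals $\lambda/2$, i.e., $R \sim (\|f\|_p/\lambda)^{p/(n-\ell p)}$. Using $1/q = 1/p - \ell/n$ the pointwise inequality $|T_\ell f(x)|>\lambda$ then forces $Mf(x) > c\,\lambda^{q/p}\|f\|_p^{1-q/p}$. The classical weak-type $(p,p)$ bound $|\{Mf>t\}|\le C(\|f\|_p/t)^p$ therefore yields $|\{|T_\ell f|>\lambda\}|\le C(\|f\|_p/\lambda)^q$, which is precisely the weak-type $(p,q)$ estimate. Finally, since the scaling condition $1/q = 1/p-\ell/n$ is preserved under small perturbations of $p$, I would apply the above argument at two nearby exponents $(p_1,q_1)$ and $(p_2,q_2)$ with $p_1<p<p_2$ and use the Marcinkiewicz interpolation theorem to conclude the strong-type $(p,q)$ bound \eqref{HLS}. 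The main technical obstacle is the sharp optimization of $R$ and the verification that Marcinkiewicz's hypotheses are satisfied in a neighborhood of the endpoint; the endpoints $p=1$ (where only the weak-(1,1) bound for $M$ holds) and $q=\infty$ are excluded by the hypothesis, so interpolation between two interior weak-type estimates suffices.
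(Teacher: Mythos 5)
Your proposal is correct. Note that the paper itself offers no proof of this lemma: it is quoted verbatim from Stein's \emph{Singular Integrals and Differentiability Properties of Functions} (Chapter V, Theorem 1), so there is no in-paper argument to compare against. Your argument is the classical one and is essentially the same in spirit as the proof in the cited reference: split the kernel $|x-y|^{\ell-n}$ at a radius $R$ chosen in terms of the level $\lambda$, obtain a weak-type $(p,q)$ bound, and upgrade by Marcinkiewicz interpolation (legitimate here because the scaling relation $1/q=1/p-\ell/n$ is affine in $1/p$, so nearby admissible pairs $(p_1,q_1)$, $(p_2,q_2)$ with $q_1\neq q_2$ bracket the target pair). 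The one genuine variation is that you control the near-field piece by the Hardy--Littlewood maximal function via dyadic annuli, where Stein bounds it by $\|K\chi_{B_R}\|_1\|f\|_p$ in $L^p$; both are sound, and all your exponent computations ($(\ell-n)p'+n<0\iff p<n/\ell$, the choice $R\sim(\|f\|_p/\lambda)^{p/(n-\ell p)}$, and $q/p=n/(n-\ell p)$) check out. A small remark: once you have the pointwise bound $|T_\ell f(x)|\le CR^{\ell}Mf(x)+CR^{\ell-n/p}\|f\|_p$, optimizing $R$ pointwise in $x$ gives Hedberg's inequality $|T_\ell f(x)|\le C\,(Mf(x))^{p/q}\|f\|_p^{1-p/q}$, from which the strong $(p,q)$ bound follows directly from the $L^p$-boundedness of $M$ (using $p>1$), with no interpolation step needed.
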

Consequently, we have
\begin{cor}\label{cor:hls}
Assume $0\le \lambda<n,p=\frac{2n}{2n-\lambda}$. Then
\begin{equation}
\left|\int_{\mathbb{R}^n}\int_{\mathbb{R}^n}\frac{f(x)f(y)}{|x-y|^\lambda}dxdy\right|\leq C\|f\|_p^2.
\end{equation}
\end{cor}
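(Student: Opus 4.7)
The plan is to reduce the double integral to a single application of H\"older's inequality combined with the Hardy-Littlewood-Sobolev estimate from Lemma \ref{lmm:hls}. The case $\lambda = 0$ is immediate, since then $p = 1$ and the double integral collapses to $(\int f)^2 \le \|f\|_1^2$, so I would dispense with it in one line and concentrate on $0 < \lambda < n$.

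For the main case, I would write
\[
\left|\int_{\mathbb{R}^n}\int_{\mathbb{R}^n}\frac{f(x)f(y)}{|x-y|^\lambda}dx\,dy\right|
\le \int_{\mathbb{R}^n}|f(x)|\,\bigl(I_\lambda|f|\bigr)(x)\,dx,
\qquad (I_\lambda g)(x):=\int_{\mathbb{R}^n}\frac{g(y)}{|x-y|^\lambda}dy,
\]
and then apply H\"older's inequality with the conjugate exponent $p'=p/(p-1)$ of $p=\tfrac{2n}{2n-\lambda}$. A direct computation gives $p'=\tfrac{2n}{\lambda}$, so the task reduces to bounding $\|I_\lambda|f|\,\|_{p'}$ by $\|f\|_p$.

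Next I would invoke Lemma \ref{lmm:hls} with the choice $\ell = n-\lambda$, which makes the kernel $|x-y|^{-(n-\ell)} = |x-y|^{-\lambda}$ match exactly. The HLS hypotheses require $0<\ell<n$, $1<p<q<\infty$, and $\tfrac{1}{q}=\tfrac{1}{p}-\tfrac{\ell}{n}$; the first is equivalent to $0<\lambda<n$, and plugging in $p=\tfrac{2n}{2n-\lambda}$ one gets
\[
\frac{1}{q}=\frac{2n-\lambda}{2n}-\frac{n-\lambda}{n}=\frac{\lambda}{2n},
\]
so $q=\tfrac{2n}{\lambda}=p'$, and the strict inequalities $1<p<q<\infty$ hold exactly when $0<\lambda<n$. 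Thus Lemma \ref{lmm:hls} yields $\|I_\lambda|f|\,\|_{p'}\le C\|f\|_p$, and combining with the H\"older step produces the desired bound by $C\|f\|_p^2$.

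There is no real obstacle here; the only thing to be careful about is matching exponents and verifying that the HLS hypotheses are satisfied on the whole open range $\lambda\in(0,n)$, together with handling $\lambda=0$ separately. Because the statement is a very standard corollary of HLS, I would keep the write-up to essentially the two displayed lines above plus the exponent verification.
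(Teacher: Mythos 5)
Your proof is correct and takes the same route as the paper, which simply remarks that the case $\lambda=0$ is trivial and that the rest follows from Lemma \ref{lmm:hls}; you have supplied the H\"older-plus-HLS details with the correct exponents ($\ell=n-\lambda$, $q=p'=\tfrac{2n}{\lambda}$). As a minor bonus, your verification covers the full range $0<\lambda<n$, whereas the paper's one-line remark nominally addresses only $1<\lambda<n$.
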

When $\lambda=0$, it is trivial while for $1<\lambda<n$, it follows from Lemma \ref{lmm:hls}.

For fixed $\nu\geq0$, we define the weighted space $L^\infty_\nu(\mathbb{R}^n)$ as follows
\begin{equation}\label{WLI}
L^\infty_\nu(\mathbb{R}^n)=\{v\in L^\infty(\mathbb{R}^n):\|v\|_{L^\infty_\nu}:= \|(1+|x|)^\nu v(x)\|_{\infty}<\infty\}.
\end{equation}

For $s\in \mathbb{R}, $ $1<p<\infty$, the Sobolev spaces are defined in \cite[Chapter 13.6]{Taylor1}:
\[
 H^{s,p}(\mathbb{R}^n)=\{u\in L^p(\mathbb{R}^n): \mathcal{F}^{-1}((1+|\xi|^2)^{\frac{s}{2}}\hat{u}) \in L^p(\mathbb{R}^n)\},
 \]
 where $\mathcal{F}$ is the Fourier transform and the norm given by
 \[
 \|u\|_{H^{s,p}}:=\|\mathcal{F}^{-1}((1+|\xi|^2)^{\frac{s}{2}}\hat{u})\|_p.
 \]
 The $H^{s,p}$ spaces are also called the Bessel potential spaces and sometimes denoted by $W^{s,p}(\mathbb{R}^n)$.
 The following Sobolev embedding is standard:
\begin{lem}
\cite{Taylor1}
 For $0<sp<n,$ $1< p<\infty$, then
\[
H^{s,p}(\mathbb{R}^n)\subset L^{\frac{np}{{n-sp}}}(\mathbb{R}^n).
\]
\end{lem}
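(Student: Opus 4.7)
The natural approach is to reduce the embedding to the Hardy--Littlewood--Sobolev inequality already recorded as Lemma \ref{lmm:hls}. By the very definition of $H^{s,p}$, the operator $(1-\Delta)^{s/2}$ is an isometry from $H^{s,p}(\mathbb{R}^n)$ onto $L^p(\mathbb{R}^n)$, so any $u\in H^{s,p}$ can be written as $u = G_s * f$ where $f:=(1-\Delta)^{s/2}u \in L^p$ with $\|f\|_p=\|u\|_{H^{s,p}}$, and $G_s$ is the Bessel kernel defined by $\widehat{G_s}(\xi)=(1+|\xi|^2)^{-s/2}$. Thus it suffices to prove the convolution estimate $\|G_s*f\|_{q}\le C\|f\|_p$ with $q=\frac{np}{n-sp}$.

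The crucial ingredient is the two-sided pointwise behavior of $G_s$: for $0<s<n$ one has $G_s(x)\le C|x|^{s-n}$ when $|x|\le 1$ and $G_s(x)\le Ce^{-|x|/2}$ when $|x|\ge 1$. Granting this, I would split $G_s = G_s^{(1)} + G_s^{(2)}$ with $G_s^{(1)}:=G_s\,\mathbf{1}_{\{|x|\le 1\}}$ and $G_s^{(2)}:=G_s\,\mathbf{1}_{\{|x|>1\}}$. For the singular part, the pointwise bound $G_s^{(1)}(x)\le C|x|^{s-n}$ gives
\[
|(G_s^{(1)}*f)(x)|\le C\int_{\mathbb{R}^n}\frac{|f(y)|}{|x-y|^{n-s}}\,dy,
\]
and Lemma \ref{lmm:hls} with $\ell=s$ and exponents $1<p<q<\infty$, $\frac{1}{q}=\frac{1}{p}-\frac{s}{n}$, yields $\|G_s^{(1)}*f\|_q\le C\|f\|_p$; note that the hypothesis $0<sp<n$ is exactly what is needed so that such a $q\in(p,\infty)$ exists. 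For the smooth tail, the exponential decay implies $G_s^{(2)}\in L^r(\mathbb{R}^n)$ for every $r\in[1,\infty]$; choosing $r$ with $1+\tfrac{1}{q}=\tfrac{1}{r}+\tfrac{1}{p}$, i.e.\ $r=\frac{n}{n-s}\in(1,\infty)$, Young's convolution inequality provides $\|G_s^{(2)}*f\|_q\le \|G_s^{(2)}\|_r\|f\|_p\le C\|f\|_p$. Adding the two estimates gives $\|u\|_q\le C\|u\|_{H^{s,p}}$, which is the claimed inclusion.

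The only nontrivial input is the pointwise description of $G_s$, and that is the step I would regard as the main obstacle if one wanted a fully self-contained argument. It is obtained from the integral representation
\[
G_s(x)=\frac{1}{(4\pi)^{s/2}\Gamma(s/2)}\int_0^\infty t^{(s-n)/2-1}e^{-t-|x|^2/(4t)}\,dt,
\]
from which the behavior $G_s(x)\sim c_{n,s}|x|^{s-n}$ as $|x|\to 0$ (by rescaling the integral and dominated convergence) and the exponential tail $G_s(x)=O(e^{-|x|/2})$ as $|x|\to\infty$ (by Laplace's method applied to the saddle point $t\sim |x|/2$) are classical. Since the present paper invokes the embedding essentially as a black-box tool and cites \cite{Taylor1} for the statement, I would simply quote these pointwise estimates and focus the writeup on the reduction to Lemma \ref{lmm:hls} together with the Young's inequality step outlined above.
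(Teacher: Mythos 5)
Your argument is correct. The paper itself gives no proof of this lemma --- it is quoted as a black box from \cite{Taylor1} --- so there is no internal proof to compare against; what you have written is the standard self-contained derivation. The reduction $u=G_s*f$ with $f=(1-\Delta)^{s/2}u$ and $\|f\|_p=\|u\|_{H^{s,p}}$ is immediate from the paper's definition of the $H^{s,p}$ norm, the exponent bookkeeping is right ($q=\tfrac{np}{n-sp}$ matches $\tfrac1q=\tfrac1p-\tfrac sn$ in Lemma \ref{lmm:hls}, and $0<sp<n$ is exactly what makes $p<q<\infty$), and the split of $G_s$ into a singular part handled by Hardy--Littlewood--Sobolev and an exponentially decaying tail handled by Young's inequality is the classical route (essentially the argument in Stein, Ch.~V). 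The only inputs you take on faith are the pointwise bounds $G_s(x)\le C|x|^{s-n}$ near the origin and $G_s(x)\le Ce^{-c|x|}$ at infinity, which you correctly identify and which follow from the subordination formula you quote (up to a normalization constant that depends on the Fourier convention and is immaterial for the asymptotics). A marginally shorter variant found in some texts replaces the kernel decomposition by the observation that $(1+|\xi|^2)^{s/2}|\xi|^{-s}$ is an $L^p$ Fourier multiplier, reducing the claim directly to the Riesz potential $I_s$ and hence to Lemma \ref{lmm:hls} in one step; this buys nothing essential over your approach.
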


%================================================================================================================================
\section{Estimates of the fundamental solutions}\label{sec:fundamentalest}

Consider the functions $P(x, t)$ and $Q(x, t)$ defined for $1<\alpha\le 2$ and $0<\beta<1$:
\begin{gather}
\begin{split}
\mathcal{F}P(\cdot,t)=E_\beta(-|\xi|^\alpha t^\beta),~~
\mathcal{F}Q(\cdot, t)=E_{\beta,\beta}(-|\xi|^\alpha t^\beta),
\end{split}
\end{gather}
where $\mathcal{F}$ is the Fourier transform defined in \eqref{eq:FourierTrans}.

Now, we define
\begin{gather}\label{eq:defY}
Y(x, t)=t^{\beta-1}Q(x, t).
\end{gather}
It is mentioned in  \cite[Lemma 4.1]{kemppainen2017} that $Y$ is the $1-\beta$ order Riemann-Liouville derivative of $P$.
We pick $A=(-\Delta)^{\frac{\alpha}{2}}$ ($1<\alpha\le 2$), and consider operators $S_{\alpha}^{\beta}(t)$, $T_{\alpha}^{\beta}(t)$ defined by
\begin{gather}\label{defope}
\begin{split}
f(x)\mapsto S_{\alpha}^{\beta}(t)f(x):=E_{\beta}(-t^{\beta}A)f(x)=P(\cdot, t)*f(x),\\
f(x)\mapsto T_{\alpha}^{\beta}(t)f(x):=t^{\beta-1}E_{\beta,\beta}(-t^{\beta}A)f(x)=Y(\cdot, t)*f(x).
\end{split}
\end{gather}
The pair $\{P, Y\}$ is called the fundamental solutions to the Cauchy problem \eqref{generalized Keller-Segel}. In particular,
$S_{\alpha}^{\beta}\rho_0$ is the formal solution to the following initial value problem
\begin{equation}\label{homogenous}
\left\{
  \begin{aligned}
  ^c_0D_t^\beta \rho+(-\triangle)^{\frac{\alpha}{2}} \rho=0,&& \mbox{in } (x,t)\in\mathbb{R}^n\times(0,\infty)\\
  \rho(x,0)=\rho_0(x),\\
  \end{aligned}
     \right.
\end{equation}
while the mild solution in \eqref{solutiontoequation} can be rewritten as
\begin{gather}\label{eq:mildreformulation}
\begin{split}
&\rho(t)=S_{\alpha}^{\beta}(t)\rho_0
-\int_0^t \nabla_x\cdot(T_{\alpha}^{\beta}(t-s) \rho(s)B(\rho)(s))\,ds\\
&=\int_{\mathbb{R}^n} P(x-y, t)\rho_0(y)\,dy-\int_0^t\int_{\mathbb{R}^n}\nabla_x\cdot (Y(t-s, x-y)\rho(y,s)B(\rho)(y,s))\,dyds.
\end{split}
\end{gather}

By extending the results in \cite{KL,kemppainen2017}, we have the following claims
\begin{lem}\label{lmm:PQ}
$P$ and $Q$ are both nonnegative and integrable. In particular, we have
\begin{gather}
\int_{\mathbb{R}^n}P(x, t)\,dx=1,~~~\int_{\mathbb{R}^n}Q(x, t)\, dx=\frac{1}{\Gamma(\beta)}.
\end{gather}
\end{lem}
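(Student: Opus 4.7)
My plan is to exploit the subordination principle: $E_{\beta}(-z)$ and $E_{\beta,\beta}(-z)$ are both Laplace transforms of the nonnegative Mainardi function $M_{\beta}$, which allows me to represent $P$ and $Q$ as positive mixtures of $\alpha$-stable densities. Concretely, I would use the two identities
\begin{align*}
E_\beta(-z) &= \int_0^\infty M_\beta(\sigma)\, e^{-z\sigma}\,d\sigma,\\
E_{\beta,\beta}(-z) &= \beta \int_0^\infty \sigma\, M_\beta(\sigma)\, e^{-z\sigma}\,d\sigma,
\end{align*}
the second obtained from the first by differentiation in $z$ combined with the identity $\beta E_\beta'(z) = E_{\beta,\beta}(z)$ already quoted in the paper. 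The Mainardi function $M_\beta$ is nonnegative with moments $\int_0^\infty \sigma^k M_\beta(\sigma)\,d\sigma = \Gamma(k+1)/\Gamma(k\beta+1)$; in particular $\int_0^\infty M_\beta\,d\sigma=1$ and $\int_0^\infty \sigma M_\beta\,d\sigma=1/\Gamma(1+\beta)$.

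Next, I would introduce the fundamental solution $G_\alpha(\cdot,\tau)$ of $\partial_\tau u+(-\Delta)^{\alpha/2}u=0$, characterized in Fourier variables by $\widehat{G_\alpha}(\xi,\tau)=e^{-\tau|\xi|^\alpha}$. Since $1<\alpha\le 2$, $G_\alpha$ is the density of an isotropic $\alpha$-stable L\'evy process, hence $G_\alpha\ge 0$ and $\|G_\alpha(\cdot,\tau)\|_1=1$ for every $\tau>0$. Setting $z=|\xi|^\alpha t^\beta$ in the two Laplace representations above and inverting the Fourier transform under the $\sigma$-integral yields the pointwise subordination formulas
\begin{align*}
P(x,t) &= \int_0^\infty M_\beta(\sigma)\, G_\alpha(x, t^\beta \sigma)\,d\sigma,\\
Q(x,t) &= \beta \int_0^\infty \sigma\, M_\beta(\sigma)\, G_\alpha(x, t^\beta \sigma)\,d\sigma.
\end{align*}
Each integrand is nonnegative, so $P,Q\ge 0$ almost everywhere. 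Tonelli's theorem together with $\|G_\alpha(\cdot,t^\beta\sigma)\|_1=1$ then gives both integrability and the stated normalizations:
\begin{equation*}
\int_{\mathbb{R}^n} P(x,t)\,dx = \int_0^\infty M_\beta(\sigma)\,d\sigma = 1, \qquad \int_{\mathbb{R}^n} Q(x,t)\,dx = \beta\int_0^\infty \sigma M_\beta(\sigma)\,d\sigma = \frac{\beta}{\Gamma(1+\beta)} = \frac{1}{\Gamma(\beta)}.
\end{equation*}

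The main (minor) obstacle lies in justifying that the Fourier inversion can be taken inside the $\sigma$-integral, and that the differentiation-in-$z$ of the Laplace representation is legitimate. I would handle both points by a dominated-convergence argument, truncating $\sigma$ to $[\epsilon,R]$ where Fourier inversion and differentiation are direct, and passing to the limit using the integrability of $M_\beta$ together with the known exponential decay of $M_\beta(\sigma)$ as $\sigma\to\infty$ (which controls $\sigma M_\beta(\sigma)$) and the uniform $L^1$-bound $\|G_\alpha(\cdot,\tau)\|_1=1$. As the statement itself indicates, the scalar subordination identities in \cite{KL,kemppainen2017} already cover the essential computation, so the argument is really an adaptation of those results to the present $A=(-\Delta)^{\alpha/2}$ setting.
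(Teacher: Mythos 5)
Your proof is correct, but it follows a genuinely different route from the paper's. The paper argues abstractly: it invokes the complete monotonicity of $s\mapsto E_{\beta}(-t^{\beta}s^{\alpha})$ (resp. $E_{\beta,\beta}$), deduces that $\mathcal{F}P(\cdot,t)$ and $\mathcal{F}Q(\cdot,t)$ are positive definite, concludes nonnegativity from Bochner's theorem, and then reads off the masses from the Fourier transform at $\xi=0$, namely $E_{\beta}(0)=1$ and $E_{\beta,\beta}(0)=1/\Gamma(\beta)$. You instead make the Bernstein representation underlying that complete monotonicity explicit, writing $E_{\beta}(-z)=\int_0^{\infty}M_{\beta}(\sigma)e^{-z\sigma}\,d\sigma$ and $E_{\beta,\beta}(-z)=\beta\int_0^\infty \sigma M_\beta(\sigma)e^{-z\sigma}\,d\sigma$ (your derivation of the latter from $\beta E_\beta'=E_{\beta,\beta}$ is correct), which turns $P$ and $Q$ into explicit nonnegative mixtures of the $\alpha$-stable densities $G_\alpha(\cdot,t^\beta\sigma)$; Tonelli and the moments of $M_\beta$ then give integrability and the normalizations in one stroke (and your constant check $\beta/\Gamma(1+\beta)=1/\Gamma(\beta)$ is right). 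The two arguments use the same underlying ingredients — nonnegativity of $M_\beta$ is equivalent, via Bernstein's theorem, to the complete monotonicity the paper cites, and nonnegativity of $G_\alpha$ for $\alpha\le 2$ is what makes $e^{-\tau|\xi|^\alpha}$ positive definite — but yours is constructive: the subordination formula is a stronger output that could be reused to transfer $L^1$-contraction and pointwise bounds from the stable semigroup to $P$ and $Q$, at the cost of the extra (standard, but nontrivial) facts about $M_\beta$ and the justification of Fourier inversion under the $\sigma$-integral, which you handle adequately by truncation and dominated convergence. The paper's route is shorter but yields nonnegativity only through the abstract Bochner statement, and both proofs obtain the total mass by essentially the same evaluation.
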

\begin{proof}
The proof follows from Remark 4.2 in \cite{kemppainen2017}. It is well-known that
\[
s \mapsto E_{\beta}(-t^{\beta}s^{\alpha}),~~s \mapsto E_{\beta,\beta}(-t^{\beta}s^{\alpha})
\]
are completely monotone functions on $\mathbb{R}_+$. Hence, $\mathcal{F}P(\cdot,t)$ and $\mathcal{F}Q(\cdot, t)$ are positive definite on $\mathbb{R}^n$. By Bochner's theorem \cite{rudin2017}, both $P$ and $Q$ are nonnegative.

Note that the Fourier transform evaluated at $\xi=0$ equals the integral of the function. Then, we have
\[
\int_{\mathbb{R}^d}P(x,t)\,dx=E_{\beta}(-0^{\alpha}t^{\beta})=1.
\]
With the fact that $P(\cdot, t)$ is nonnegative, we find that $P(\cdot, t)$ is integrable and the integral values is $1$. Similar results for $Q(\cdot, t)$ follow from the fact
\[
E_{\beta,\beta}(0)=\frac{1}{\Gamma(\beta)}.
\]
\end{proof}

Next, we are going to collect some estimates of these operators, which will be useful for the analysis of time fractional PDEs (though some of them are not used in this paper).

\subsection{Contraction  properties}

The contraction properties follow from the asymptotic behavior of the Mittag-Leffler functions.  We have the following estimates regarding the operators appeared in \eqref{solutiontoequation}:
\begin{lem}
\cite[(8.23),(8.36),(8.38) ]{Taylor}
\begin{enumerate}[(i)]
\item Suppose that $e^{-tA}$ is a contraction semi-group in a Banach space, where $A$ is the generator of the semigroup. Then,
\[
||E_\beta(-t^\beta A)f||_{B}\leq \|f||_B,~~
||E_{\beta,\beta}(-t^\beta A)f||_{B}\leq \frac{1}{\Gamma(\beta)}\|f\|_B.
\]
\item
Let $0<\alpha\leq 2$ and $A=(-\Delta)^{\frac{\alpha}{2}}$.
If $1< p<\infty$ and $\sigma\in(0,1]$, then for $T_0>0$, there exists $C>0$ such that
\begin{equation}\label{Sin11}
||E_\beta(-t^\beta A)f||_{H^{\sigma\alpha,p}}\leq C t^{-\sigma\beta}\|f||_p,~~
||E_{\beta,\beta}(-t^\beta A)f||_{H^{\sigma\alpha,p}}\leq Ct^{-\sigma\beta}\|f||_p,
\end{equation}
uniformly for $t\in (0,T_0]$.
\end{enumerate}
\end{lem}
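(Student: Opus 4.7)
The plan is to prove both parts via the classical subordination principle for Mittag-Leffler functions. Recall the scalar identity
\begin{equation*}
E_\beta(-\lambda)=\int_0^\infty M_\beta(s)\,e^{-\lambda s}\,ds,\qquad \lambda\ge 0,
\end{equation*}
where $M_\beta$ is the Mainardi M-function, nonnegative on $(0,\infty)$ with $\int_0^\infty M_\beta\,ds=E_\beta(0)=1$. Via the Hille-Phillips functional calculus for the generator $A$ of the contraction semigroup $\{e^{-\tau A}\}_{\tau\ge 0}$, this lifts to the operator-valued Bochner integral
\begin{equation*}
E_\beta(-t^\beta A)f=\int_0^\infty M_\beta(s)\,e^{-s t^\beta A}f\,ds.
\end{equation*}
Since $E_{\beta,\beta}(-\lambda)$ is also completely monotone (as already used in the proof of Lemma \ref{lmm:PQ}), Bernstein's theorem gives an analogous representation with a nonnegative density $\widetilde M_\beta$ of total mass $E_{\beta,\beta}(0)=1/\Gamma(\beta)$.

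For part (i) I would simply pull $B$-norms inside the Bochner integrals and invoke the contraction hypothesis $\|e^{-\tau A}f\|_B\le\|f\|_B$:
\begin{equation*}
\|E_\beta(-t^\beta A)f\|_B\le\int_0^\infty M_\beta(s)\,\|e^{-s t^\beta A}f\|_B\,ds\le\|f\|_B,
\end{equation*}
and likewise for $E_{\beta,\beta}$ with constant $1/\Gamma(\beta)$.

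For part (ii) I specialize to $A=(-\Delta)^{\alpha/2}$ on $\mathbb R^n$ and combine the subordination formula with the smoothing estimate
\begin{equation*}
\|A^\sigma e^{-\tau A}f\|_p\le C\tau^{-\sigma}\|f\|_p,\qquad \tau>0,\ 1<p<\infty,\ \sigma\in(0,1],
\end{equation*}
which follows from standard $L^p$ bounds on the kernel of $e^{-\tau(-\Delta)^{\alpha/2}}$ (equivalently, Fourier-multiplier estimates). Inserting this into the subordination formula and applying Minkowski's integral inequality yields
\begin{equation*}
\|A^\sigma E_\beta(-t^\beta A)f\|_p\le C\|f\|_p\,t^{-\sigma\beta}\int_0^\infty s^{-\sigma}M_\beta(s)\,ds,
\end{equation*}
and the last integral equals $\Gamma(1-\sigma)/\Gamma(1-\sigma\beta)$ by the known negative moments of $M_\beta$. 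Combined with the $L^p$ bound from part (i) and the norm equivalence $\|u\|_{H^{\sigma\alpha,p}}\sim\|u\|_p+\|A^\sigma u\|_p$, absorbing the bounded factor $t^{(1-\sigma)\beta}$ on $(0,T_0]$, this produces the desired estimate. The $E_{\beta,\beta}$ case is identical with $\widetilde M_\beta$ in place of $M_\beta$.

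The main obstacle is the endpoint $\sigma=1$, where $\Gamma(1-\sigma)$ diverges and the negative-moment argument above breaks down. To handle it I would differentiate the subordination formula in $t$, use the identity $\beta E_\beta'(z)=E_{\beta,\beta}(z)$ recorded in the paper to express $A\,E_\beta(-t^\beta A)$ in terms of $t^{\beta-1}$ times an $E_{\beta,\beta}$-operator, and then close the estimate by applying the already-established bounds for $E_{\beta,\beta}$ with $\sigma<1$ followed by interpolation. A secondary technical point is justifying the Bochner integrability of $s\mapsto M_\beta(s)\,e^{-st^\beta A}f$ in a general Banach space, but this follows from strong continuity of the semigroup and the $L^1$-integrability of $M_\beta$.
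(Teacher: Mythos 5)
The paper offers no proof of this lemma; it is quoted from Taylor, whose argument writes $A^{\sigma}E_\beta(-t^{\beta}A)=t^{-\sigma\beta}h_{\sigma}(t^{\beta}A)$ with $h_{\sigma}(y)=y^{\sigma}E_\beta(-y)$ and verifies that $h_\sigma$ is, uniformly in $t$ by scaling, a Mikhlin--H\"ormander multiplier. Your subordination route is a legitimate alternative: it proves part (i) in full, proves part (ii) for $\sigma\in(0,1)$, and in fact for the $E_{\beta,\beta}$ family it covers $\sigma=1$ as well, since the Bernstein density there is $\widetilde M_\beta(s)=\beta s M_\beta(s)$, whose $(-1)$-moment $\beta\int_0^\infty M_\beta(s)\,ds=\beta$ is finite.

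The genuine gap is the endpoint $\sigma=1$ for $E_\beta$. Differentiating the subordination formula in $t$ and invoking $\beta E_\beta'(z)=E_{\beta,\beta}(z)$ yields $\partial_t E_\beta(-t^{\beta}A)f=-t^{\beta-1}AE_{\beta,\beta}(-t^{\beta}A)f$; this is an identity for $AE_{\beta,\beta}(-t^{\beta}A)$, not for $AE_\beta(-t^{\beta}A)$, so it does not "express $AE_\beta(-t^{\beta}A)$ in terms of an $E_{\beta,\beta}$-operator" as you claim. Interpolation cannot rescue this either: your constants blow up like $\Gamma(1-\sigma)$ as $\sigma\to 1^-$, which is precisely the statement that the endpoint is not reachable as a limit of the interior estimates. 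The failure is intrinsic to the Minkowski step: the scalar bound $\sup_{y\ge 0}\,yE_\beta(-y)<\infty$ does hold (from the asymptotics $E_\beta(-y)\sim (y\Gamma(1-\beta))^{-1}$), but
\begin{equation*}
\int_0^\infty M_\beta(s)\,\sup_{y\ge 0}\bigl(ye^{-sy}\bigr)\,ds=e^{-1}\int_0^\infty s^{-1}M_\beta(s)\,ds=\infty ,
\end{equation*}
so pulling norms inside the subordination integral genuinely loses the endpoint. To close it, argue directly at $\sigma=1$: for $1<p<\infty$ the symbol $\xi\mapsto t^{\beta}|\xi|^{\alpha}E_\beta(-t^{\beta}|\xi|^{\alpha})=h(t^{\beta}|\xi|^{\alpha})$ with $h(y)=yE_\beta(-y)$ satisfies $|y^{k}h^{(k)}(y)|\le C_k$, hence the Mikhlin condition uniformly in $t$, giving $\|AE_\beta(-t^{\beta}A)f\|_p\le Ct^{-\beta}\|f\|_p$. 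With that replacement (which in fact treats every $\sigma\in(0,1]$ at once and is essentially Taylor's proof) your argument becomes complete.
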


\subsection{$L^r-L^q$ estimates}\label{sec:Lpfund}
The $L^r-L^q$ estimates follow from the asymptotic study of the fundamental solutions, which have been established in \cite{eidelman2004,kemppainen2017} using the asymptotic behaviors of the so-called Fox $H$-functions:
\begin{lem}\label{lmm:asymp}
Let $0<\beta<1, 1<\alpha\leq2$. We have the following asymptotic estimates for $P$ and $Y$:\\
(1)When $|x|^\alpha t^{-\beta} \geq 1$, there exists $C>0$ such that
\begin{equation}\label{eq:Pfaraway}
|P(x,t)|\leq
\begin{cases}
C|x|^{-n-\alpha}t^\beta, & 1<\alpha<2,\\
Ct^{-\frac{n\beta}{2}}\exp\{-C|x|^{\frac{\alpha}{\alpha-\beta}}t^{-\frac{\beta}{\alpha-\beta}}\}, & \alpha=2,
\end{cases}
\end{equation}
\begin{equation}
|\nabla P(x,t)|\leq
\begin{cases}
C|x|^{-n-\alpha-1}t^\beta, & 1<\alpha<2,\\
Ct^{-\frac{\beta(n+1)}{2}}\exp\{-C|x|^{\frac{\alpha}{\alpha-\beta}}t^{-\frac{\beta}{\alpha-\beta}}\}, & \alpha=2,
\end{cases}
\end{equation}
and that
\begin{equation}
|Y(x,t)|\leq
\begin{cases}
C|x|^{-n-\alpha}t^{2\beta-1}, & 1<\alpha<2,\\
Ct^{-\frac{n\beta}{2}+\beta-1}\exp\{-C|x|^{\frac{\alpha}{\alpha-\beta}}t^{-\frac{\beta}{\alpha-\beta}}\}, & \alpha=2.
\end{cases}
\end{equation}
\begin{equation}\label{eq:gradY1}
|\nabla Y(x,t)|\leq
\begin{cases}
C|x|^{-n-\alpha-1}t^{2\beta-1}, & 1<\alpha<2,\\
Ct^{-\frac{\beta(n+1)}{2}+\beta-1}\exp\{-C|x|^{\frac{\alpha}{\alpha-\beta}}t^{-\frac{\beta}{\alpha-\beta}}\},  & \alpha=2,
\end{cases}
\end{equation}

(2) When $|x|^\alpha t^{-\beta} \leq 1$, there is $C>0$ such that
\begin{equation}\label{eq:Psmall}
|P(x,t)|\leq
\begin{cases}
Ct^{-\frac{n\beta}{\alpha}}, & \alpha>n,\\
C|x|^{-n+\alpha}t^{-\beta}, & \alpha<n,\\
Ct^{-\beta}(1+|\ln|x|^\alpha t^{-\beta}|), & \alpha=n(=2).
\end{cases}
\end{equation}
\begin{equation}\label{Lemma2.88}
|\nabla P(x,t)|\leq C|x|^{-n+\alpha-1}t^{-\beta}.
\end{equation}
and that
\begin{equation}
|Y(x,t)|\leq
\begin{cases}
Ct^{-\frac{n\beta}{\alpha}+\beta-1}, & 2\alpha>n,\\
C|x|^{-n+2\alpha}t^{-\beta-1}, & 2\alpha<n,\\
Ct^{-\beta-1}(1+|\ln|x|^\alpha t^{-\beta}|), & 2\alpha=n.
\end{cases}
\end{equation}
\begin{equation}\label{eq:gradY2}
|\nabla Y(x,t)|\leq
\begin{cases}
t^{-\beta-1}|x|^{-n-1+2\alpha} & n>2\alpha-2,\\
t^{-\beta-1}|x|(1+\log(|x|^{\alpha}t^{-\beta})), & n=2\alpha-2,\\
t^{\beta-1-\frac{\beta(n+2)}{\alpha}}|x| & n<2\alpha-2.
\end{cases}
\end{equation}
\end{lem}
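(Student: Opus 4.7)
The plan is to exploit the self-similar structure inherited from the Fourier representation. Starting from $\mathcal{F}P(\cdot,t)(\xi)=E_\beta(-|\xi|^\alpha t^\beta)$ and substituting $\xi=t^{-\beta/\alpha}\eta$, one finds $P(x,t)=t^{-n\beta/\alpha}\Phi_{n,\alpha,\beta}(|x|t^{-\beta/\alpha})$, and by the same substitution $Y(x,t)=t^{\beta-1-n\beta/\alpha}\Psi_{n,\alpha,\beta}(|x|t^{-\beta/\alpha})$. Each estimate thus reduces to a one-variable bound on the radial profile at the rescaled argument $r=|x|t^{-\beta/\alpha}$: the far-field regime $|x|^\alpha t^{-\beta}\ge 1$ becomes $r\ge 1$, the near-field regime becomes $r\le 1$, and the prefactors combine with powers of $r$ to produce the $|x|$ and $t$ exponents in the statement.

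Next I would represent $\Phi$ and $\Psi$ by Mellin--Barnes integrals and identify them with Fox $H$-functions. Combining the Mellin transform of the Mittag-Leffler function $\int_0^\infty r^{s-1}E_\beta(-r^\alpha)\,dr=\frac{1}{\alpha}\frac{\Gamma(s/\alpha)\Gamma(1-s/\alpha)}{\Gamma(1-\beta s/\alpha)}$ with the Hankel inversion formula for a radial Fourier transform in $\mathbb{R}^n$, one obtains explicit $H$-function formulas for $\Phi$ (and, with $E_{\beta,\beta}$ in place of $E_\beta$, for $\Psi$) whose parameters are completely determined by $n,\alpha,\beta$. The pointwise bounds then follow from the standard large/small-argument asymptotics of Fox $H$-functions, exactly as carried out in Chapter~5 of \cite{eidelman2004} and in \cite[Sec.~3]{kemppainen2017}; the task is to read those bounds off in the form stated here.

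The two regimes arise from the two natural ways of closing the Mellin--Barnes contour. For $r\to 0$ one sums residues on the right: the leading pole for $\Phi$ is at $s=n-\alpha$, which produces the algebraic order $r^{-n+\alpha}$ of \eqref{eq:Psmall} when $\alpha<n$; $\alpha>n$ makes $\Phi$ bounded (the $t^{-n\beta/\alpha}$ line); and $\alpha=n=2$ creates a double pole responsible for the logarithmic correction. The same analysis gives \eqref{Lemma2.88}, and running it on the $\Psi$ representation yields the near-field bounds for $Y$ together with \eqref{eq:gradY2}; differentiation just shifts the parameter list of the $H$-function by one (effectively an extra factor $r$ inside the integrand). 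For $r\to\infty$ with $1<\alpha<2$, residues on the left produce a convergent series whose leading term is $r^{-n-\alpha}$, giving after un-scaling the polynomial tails in \eqref{eq:Pfaraway} and \eqref{eq:gradY1}.

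The main obstacle is the $\alpha=2$ stretched-exponential tail, which cannot be extracted from residues of the Mellin--Barnes integrand. Here I would use the subordination identity $E_\beta(-|\xi|^2 t^\beta)=\int_0^\infty e^{-|\xi|^2 t^\beta\tau}M_\beta(\tau)\,d\tau$, where $M_\beta$ is the Mainardi function, to rewrite $P(x,t)=\int_0^\infty(4\pi t^\beta\tau)^{-n/2}e^{-|x|^2/(4t^\beta\tau)}M_\beta(\tau)\,d\tau$. The known asymptotic $M_\beta(\tau)\sim C\tau^{(\beta-1/2)/(1-\beta)}\exp(-c\tau^{1/(1-\beta)})$ as $\tau\to\infty$, combined with a saddle-point analysis in $\tau$ when $|x|^2/t^\beta$ is large, produces the claimed stretched-exponential factor $\exp\{-C|x|^{2/(2-\beta)}t^{-\beta/(2-\beta)}\}$. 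The bounds for $Y$, $\nabla P$ and $\nabla Y$ then follow by differentiating under the integral sign, which multiplies the integrand by a factor $|x|/(2t^\beta\tau)$ for gradients and contributes the overall $t^{\beta-1}$ through the relation \eqref{eq:defY}; once this one stretched-exponential ingredient is in hand, the remainder is routine Fox $H$-function book-keeping.
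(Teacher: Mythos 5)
Your outline is correct and follows essentially the same route as the paper, which does not prove Lemma \ref{lmm:asymp} itself but cites it from \cite{eidelman2004,kemppainen2017} as a consequence of the Fox $H$-function asymptotics; your self-similar reduction, Mellin--Barnes/residue analysis for the two regimes, and the Mainardi-function subordination with a Laplace/saddle-point argument for the $\alpha=2$ stretched exponential is precisely the machinery those references use. The only points to keep in mind when fleshing it out are that $Y$ requires the subordination kernel $\beta\tau M_\beta(\tau)$ associated with $E_{\beta,\beta}$ (equivalently, $Y$ is the order-$(1-\beta)$ Riemann--Liouville derivative of $P$), and that the improved near-field exponent $-n+2\alpha$ for $Y$ versus $-n+\alpha$ for $P$ comes from a cancellation of the leading residue caused by the zero of $1/\Gamma(\beta-\beta s/\alpha)$ --- both routine within the framework you describe.
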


\begin{rem}
In \cite{KL}, in the expression for $P$, the prefactor of the exponential is given by $|x|^{-n}$ in the case $\alpha=2$ and $|x|^{\alpha}t^{-\beta}\ge 1$. This is indeed equivalent to the above estimates. In fact, we introduce $z=|x|^{2}t^{-\beta}$ and $|x|^{-n}\exp(\ldots)=t^{-\frac{n\beta}{2}} (z^n\exp(\ldots))$ which is controlled from above and below by the same exponential with a different constant inside.
\end{rem}
Using Lemma \ref{lmm:asymp}, one can derive the estimates of $\|P\|_p$ and $\|\nabla P\|_p$ (see \cite[Lemma 6.1, Lemma 6.22]{kemppainen2017}). We summarize the results as following.
\begin{prop}\cite{kemppainen2017}\label{pro:fundP}
Suppose $0<\beta<1$ and $1<\alpha\le 2$.

(1).  Set $\kappa_1=\frac{n}{n-\alpha}$ if $n>\alpha$ and $\kappa_1=\infty$ otherwise. Then we have for any $p\in [1, \kappa_1)$, there exists $C>0$ such that
\begin{equation}\label{Lp-estimate}
\|P\|_p\leq C t^{-\frac{n\beta}{\alpha}(1-\frac{1}{p})}.
\end{equation}
If $n<\alpha$ (or $n=1$), \eqref{Lp-estimate} also holds for $p=\kappa_1=\infty$. If $n>\alpha$, for $p=\kappa_1=\frac{n}{n-\alpha}$, \eqref{Lp-estimate} holds only for weak $L^{\kappa_1}$ norm:
\[
\|P\|_{L^{\kappa_1,\infty}}\le C t^{-\beta}.
\]

(2). Let $\kappa_2=\frac{n}{n-\alpha+1}$ if $n>\alpha-1$ and $\kappa_2=\infty$ otherwise. Then for $p\in [1, \kappa_2)$, there is $C>0$ such that
\begin{equation}\label{lpestimate}
\|\nabla P\|_p\leq C t^{-\frac{n\beta}{\alpha}(1-\frac{1}{p})-\frac{\beta}{\alpha}}.
\end{equation}
If $n\le \alpha-1$ (or $n=1, \alpha=2$), \eqref{lpestimate} also holds for $p=\kappa_2=\infty$. For $n>\alpha-1$ and $p=\kappa_2$, \eqref{lpestimate} only holds in weak $L^p$:
\[
\|\nabla P\|_{L^{\kappa_2,\infty}}\le C t^{-\beta}.
\]
\end{prop}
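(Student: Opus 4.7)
\bigskip

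\noindent\textbf{Proof plan for Proposition \ref{pro:fundP}.}

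The plan is to use the pointwise asymptotics collected in Lemma \ref{lmm:asymp} and integrate separately over the two self-similar regions $\Omega_1(t)=\{|x|^\alpha\le t^\beta\}$ (``inner'' region) and $\Omega_2(t)=\{|x|^\alpha\ge t^\beta\}$ (``outer'' region). All the exponents that appear in the statement are forced by the scaling $|x|\sim t^{\beta/\alpha}$, so the only substantive task is to verify that each integral converges under the stated range of $p$ and that the $t$-power matches.

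For part (1), on $\Omega_1(t)$ with $n>\alpha$ I will insert the bound $|P(x,t)|\le C|x|^{\alpha-n}t^{-\beta}$ and compute
\[
\int_{\Omega_1(t)}|P|^p\,dx\le Ct^{-\beta p}\int_{|x|\le t^{\beta/\alpha}}|x|^{(\alpha-n)p}\,dx,
\]
which is finite precisely when $(n-\alpha)p<n$, i.e.\ $p<\kappa_1$, and a direct evaluation yields $Ct^{(n\beta/\alpha)(1-p)}$. The cases $\alpha>n$ (use $|P|\le Ct^{-n\beta/\alpha}$ and the volume of $\Omega_1(t)$) and $\alpha=n$ (with the extra logarithmic factor, absorbed by enlarging the power of $|x|$ by an arbitrarily small amount) are analogous. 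On $\Omega_2(t)$, for $1<\alpha<2$ the bound $|P|\le C|x|^{-n-\alpha}t^\beta$ gives
\[
\int_{\Omega_2(t)}|P|^p\,dx\le Ct^{\beta p}\int_{|x|\ge t^{\beta/\alpha}}|x|^{-(n+\alpha)p}\,dx;
\]
this integral converges at infinity for every $p\ge 1$ (since $(n+\alpha)p>n$), and evaluates to the same $Ct^{(n\beta/\alpha)(1-p)}$. For $\alpha=2$ the Gaussian-like tail is even easier. Combining the two pieces and taking the $p$-th root gives \eqref{Lp-estimate}. The case $p=\kappa_1$ uses weak $L^p$ instead: for $n>\alpha$ the inner bound $|P|\le C|x|^{\alpha-n}t^{-\beta}$ yields $\{|P|>\lambda\}\cap\Omega_1(t)\subset\{|x|^{n-\alpha}<Ct^{-\beta}/\lambda\}$, which has Lebesgue measure $\le C\lambda^{-\kappa_1}t^{-\beta\kappa_1}$; the outer contribution to this level set is of lower order, so $\|P\|_{L^{\kappa_1,\infty}}\le Ct^{-\beta}$.

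Part (2) proceeds identically after replacing the bounds for $P$ by those for $\nabla P$. The inner estimate $|\nabla P|\le C|x|^{\alpha-1-n}t^{-\beta}$ forces the convergence condition $(n-\alpha+1)p<n$, i.e.\ $p<\kappa_2$. The outer estimate for $1<\alpha<2$ reads $|\nabla P|\le C|x|^{-n-\alpha-1}t^\beta$, whose $L^p$ norm over $\Omega_2(t)$ is finite for all $p\ge 1$. A bookkeeping of the powers of $t$ in both regions produces the claimed exponent $-\tfrac{n\beta}{\alpha}(1-\tfrac{1}{p})-\tfrac{\beta}{\alpha}$, and the borderline exponent $p=\kappa_2$ is again handled by the distribution-function computation described above.

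The main obstacle is less the integration itself than the case analysis: one must keep track of which of the three subcases of Lemma \ref{lmm:asymp} applies (depending on the comparison of $n$ with $\alpha$, $2\alpha$, $\alpha-1$ or $2\alpha-2$), match the log-corrected borderline cases with the expected $t$-power (which requires absorbing the logarithm into a slightly worse monomial on a bounded region), and verify in the outer region that the pointwise bound in $\alpha=2$ produces no worse decay than in $1<\alpha<2$. Once these sub-cases are aligned, the self-similar scaling $|x|\sim t^{\beta/\alpha}$ automatically produces the stated $t$-exponents, and the weak $L^p$ endpoint follows from a one-line distribution-function estimate.
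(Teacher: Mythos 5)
Your plan is correct and is essentially the same argument the paper uses (the paper cites \cite{kemppainen2017} for this proposition and carries out precisely this computation for $\nabla Y$ in \ref{app:fund}): split $\mathbb{R}^n$ along $|x|^\alpha = t^\beta$, insert the pointwise bounds of Lemma \ref{lmm:asymp}, integrate in polar coordinates to read off the convergence condition $p<\kappa_i$ and the self-similar $t$-exponent, and handle the endpoint $p=\kappa_i$ by the distribution-function estimate on the inner region together with the (still convergent) strong bound on the outer region. All the exponent bookkeeping and the treatment of the logarithmic borderline case check out.
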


Similarly, we have the estimates for $Y$:
\begin{prop}\label{pro:fundY}
Suppose $0<\beta<1$ and $1<\alpha\le 2$.

(1).  Set $\kappa_3=\frac{n}{n-2\alpha}$ if $n>2\alpha$ and $\kappa_1=\infty$ otherwise. Then we have for any $p\in [1, \kappa_3)$, there exists $C>0$ such that
\begin{equation}\label{Lp-estimateY}
\|Y\|_p\leq C t^{-\frac{n\beta}{\alpha}(1-\frac{1}{p})+\beta-1}.
\end{equation}
If $n<2\alpha$, \eqref{Lp-estimateY} also holds for $p=\kappa_3=\infty$. If $n>2\alpha$, for $p=\kappa_3=\frac{n}{n-2\alpha}$, \eqref{Lp-estimateY} holds only for weak $L^{\kappa_1}$ norm:
\[
\|Y\|_{L^{\kappa_3,\infty}}\le C t^{-\beta-1}.
\]

(2). Let $\kappa_4=\frac{n}{n-2\alpha+1}$ if $n>2\alpha-1$ and $\kappa_4=\infty$ otherwise. Then for $p\in [1, \kappa_4)$, there is $C>0$ such that
\begin{equation}\label{eq:gradYLp}
\|\nabla Y\|_p\leq C t^{-\frac{n\beta}{\alpha}(1-\frac{1}{p})-\frac{\beta}{\alpha}+\beta-1}.
\end{equation}

If $n\le 2\alpha-1$, \eqref{eq:gradYLp} also holds for $p=\kappa_4=\infty$. For $n>2\alpha-1$ and $p=\kappa_4$, \eqref{eq:gradYLp} only holds in weak $L^p$:
\[
\|\nabla Y\|_{L^{\kappa_2,\infty}}\le C t^{-\beta-1}.
\]
\end{prop}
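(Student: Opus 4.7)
My plan is to mirror the argument used for Proposition \ref{pro:fundP}, exploiting the pointwise bounds on $Y$ and $\nabla Y$ provided by Lemma \ref{lmm:asymp}. The natural splitting is suggested by the two regimes in that lemma: for fixed $t>0$ I would write
\begin{equation*}
\|Y(\cdot,t)\|_p^p = \int_{|x|^{\alpha}\le t^{\beta}} |Y(x,t)|^p\,dx + \int_{|x|^{\alpha}\ge t^{\beta}} |Y(x,t)|^p\,dx =: I_{\mathrm{in}} + I_{\mathrm{out}},
\end{equation*}
and similarly for $\nabla Y$. In each region I plug in the relevant pointwise bound, pass to polar coordinates, and substitute $r=t^{\beta/\alpha}\sigma$ to factor out the time dependence. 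The power of $t$ that falls out is precisely $-\tfrac{n\beta}{\alpha}(1-\tfrac{1}{p})+\beta-1$ (resp.\ with an extra $-\beta/\alpha$ for the gradient), provided the radial integrals in $\sigma$ converge.

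For Part (1), the inner integral is controlled, in the generic case $n>2\alpha$, by $\int_0^{t^{\beta/\alpha}} r^{(2\alpha-n)p+n-1}\,dr$, which is finite iff $p<\tfrac{n}{n-2\alpha}=\kappa_3$; when $n<2\alpha$ the inner bound is essentially $t^{-n\beta/\alpha+\beta-1}$, and the case $n=2\alpha$ needs the logarithmic estimate from \eqref{eq:Psmall}, which still integrates to the same power of $t$ up to a bounded factor. The outer integral, when $1<\alpha<2$, reduces to $\int_{t^{\beta/\alpha}}^\infty r^{-(n+\alpha)p+n-1}\,dr$, which is always convergent for $p\ge 1$; when $\alpha=2$, the exponential decay makes this integral comparable (after rescaling) to a finite Gaussian-type integral, so the time factor comes purely from the prefactor $t^{-n\beta/2+\beta-1}$. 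Collecting the two contributions gives \eqref{Lp-estimateY}. At the critical endpoint $p=\kappa_3$, the $L^p$-integral blows up logarithmically in the inner region, so I would instead bound the distribution function: using the pointwise estimate $|Y(x,t)|\le C(|x|^{2\alpha-n} t^{-\beta-1}+|x|^{-n-\alpha}t^{2\beta-1})$ (modulo the $\alpha=2$ case), a direct estimate of $d_Y(\lambda)=|\{|Y|>\lambda\}|$ by layer-cake gives $\lambda\,d_Y(\lambda)^{1/\kappa_3}\le Ct^{-\beta-1}$, yielding the stated weak-$L^{\kappa_3}$ bound.

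For Part (2), the structure is identical but with the gradient estimates \eqref{eq:gradY1}, \eqref{eq:gradY2}. The inner bound $|\nabla Y|\lesssim |x|^{2\alpha-1-n}t^{-\beta-1}$ (valid when $n>2\alpha-2$) leads to convergence condition $p<\tfrac{n}{n-2\alpha+1}=\kappa_4$; in the sub-threshold cases $n\le 2\alpha-2$ the pointwise bound is continuous in $x$ and integrates trivially on the bounded inner region. The outer integral is again convergent for all $p\ge 1$ thanks to the $|x|^{-n-\alpha-1}$ decay (or Gaussian decay when $\alpha=2$). The critical endpoint $p=\kappa_4$ is handled exactly as in Part (1) via the distribution function.

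The main technical obstacle is bookkeeping in the borderline subcases $n=2\alpha$ and $n=2\alpha-2$, where the Lemma \ref{lmm:asymp} bounds contain logarithmic factors; these only contribute bounded multiplicative constants after the rescaling $r=t^{\beta/\alpha}\sigma$, but one must verify carefully that the logarithm remains integrable in $\sigma$ on the rescaled inner region $\{\sigma\le 1\}$. The weak-$L^p$ endpoint estimates are the other delicate part: there one must identify the leading singular contribution (the inner piece for $\|Y\|_{L^{\kappa_3,\infty}}$ and for $\|\nabla Y\|_{L^{\kappa_4,\infty}}$) and show the outer, integrable piece does not disrupt the correct power of $t$.
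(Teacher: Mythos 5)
Your proposal is correct and follows essentially the same route as the paper's Appendix~A proof: split $\mathbb{R}^n$ into the regions $|x|^{\alpha}\gtrless t^{\beta}$, insert the pointwise bounds of Lemma~\ref{lmm:asymp}, rescale $r=t^{\beta/\alpha}\sigma$ in polar coordinates (treating the borderline logarithmic cases separately), and estimate the distribution function directly at the endpoint $p=\kappa_3$ or $p=\kappa_4$ to get the weak-$L^p$ bound. The only quibble is that for the $n=2\alpha$ case of Part (1) the relevant logarithmic bound is the one for $Y$ in Lemma~\ref{lmm:asymp} Part (2), not \eqref{eq:Psmall}, which concerns $P$.
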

The proof of Proposition \ref{pro:fundY} is similar to the proof of Proposition \ref{pro:fundP}. Part (1) of Proposition \ref{pro:fundY} is the Lemma 6.2 in \cite {kemppainen2017} and part (2) does not appear in \cite{kemppainen2017}. Though the proof is similar, due to the importance of this result, we attach the proof in \ref{app:fund} for completeness.

With the help of Propositions \ref{pro:fundP} and  \ref{pro:fundY}, we obtain the following $L^r-L^q$ estimates regarding the operators $S_{\alpha}^{\beta}(t)$ and $T_{\alpha}^{\beta}(t)$:
\begin{prop}\label{pro:Lpofevoloperator}
 Let $0<\beta<1$ and $1<\alpha\le 2$.  Then,

 (1). The following $L^{\infty}$ estimates hold:
 \begin{gather}
 \begin{split}
 \|S_{\alpha}^{\beta}(t)u\|_{\infty}
 \le \|u\|_{\infty},~~
 \|T_{\alpha}^{\beta}(t)u\|_{\infty}\le \frac{1}{\Gamma(\beta)}t^{\beta-1}\|u\|_{\infty},\\
 \|\nabla S_{\alpha}^{\beta}(t)u\|_{\infty}\le C t^{-\frac{\beta}{\alpha}}\|u\|_{\infty},
 ~~\|\nabla T_{\alpha}^{\beta}(t)u\|_{\infty}\le Ct^{-\frac{\beta}{\alpha}+\beta-1}\|u\|_{\infty}.
 \end{split}
 \end{gather}

 (2). Let $q\in [1,\infty)$. We define  $\theta_1=\frac{qn}{n-q\alpha}$ if $n>q\alpha$ and $\theta_1=\infty$ otherwise.
 Then, for any $r\in [1, \theta_1)$, we have
 \begin{gather}
 \|S_{\alpha}^{\beta}(t)u\|_r \le C t^{-\frac{n\beta}{\alpha}(\frac{1}{q}-\frac{1}{r})}\|u\|_q
 \end{gather}
 If $r=q$, the constant can be chosen to be $1$.  If $n<q\alpha$, then the above also holds for $r=\theta_1=\infty$.

 (3). Let $q\in [1,\infty)$. We define  $\theta_2=\frac{qn}{n-2q\alpha}$ if $n>2n\alpha$ and $\theta_2=\infty$ otherwise.
 Then, for any $r\in [1, \theta_2)$, we have
 \begin{gather}
 \|T_{\alpha}^{\beta}(t)u\|_r \le C t^{-\frac{n\beta}{\alpha}(\frac{1}{q}-\frac{1}{r})+\beta-1}\|u\|_q
 \end{gather}
 If $r=q$, the constant can be chosen as $\frac{1}{\Gamma(\beta)}$.  If $n<2q\alpha$, then the above also holds for $r=\theta_2=\infty$.

 (4). Let $q\in [1,\infty)$. Let $\theta_3=\frac{qn}{n+q(1-\alpha)}$ if $n>q(\alpha-1)$, and $\theta_3=\infty$ otherwise. Then for $r\in [q, \theta_3)$ there is $C>0$ satisfying
\begin{equation}\label{Lp-estimateofsloution2}
\|\nabla S_\alpha^\beta(t)u\|_r\leq C t^{-\frac{n\beta}{\alpha}(\frac{1}{q}-\frac{1}{r})-\frac{\beta}{\alpha}}\|u\|_q.
\end{equation}
If $n<q(\alpha-1)$, the estimate also holds for $r=\theta_3=\infty$.

(5). Let $q\in [1,\infty)$. Let $\theta_4=\frac{qn}{n+q(1-2\alpha)}$ if $n>q(2\alpha-1)$, and $\theta_4=\infty$ otherwise. Then for $r\in [q, \theta_4)$ there is $C>0$ satisfying
\begin{equation}\label{Lp-estimateofsloution3}
\|\nabla T_\alpha^\beta(t)u\|_r\leq C t^{-\frac{n\beta}{\alpha}(\frac{1}{q}-\frac{1}{r})-\frac{\beta}{\alpha}+\beta-1}\|u\|_q.
\end{equation}
If $n<q(2\alpha-1)$, the estimate also holds for $r=\theta_4=\infty$.
\end{prop}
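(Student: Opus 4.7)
The unifying idea is that every operator in the statement acts as a convolution with a fixed spatial kernel: from \eqref{defope}, $S_\alpha^\beta(t)u = P(\cdot,t)\ast u$, $T_\alpha^\beta(t)u = Y(\cdot,t)\ast u$, and the gradient versions convolve against $\nabla P(\cdot,t)$ and $\nabla Y(\cdot,t)$. The plan is therefore to apply Young's convolution inequality $\|f\ast g\|_r \le \|f\|_p \|g\|_q$ with $\tfrac{1}{p} = 1 + \tfrac{1}{r} - \tfrac{1}{q}$, and to insert the $L^p$ bounds on $P$, $\nabla P$, $Y$, $\nabla Y$ already supplied by Lemma \ref{lmm:PQ} and Propositions \ref{pro:fundP}--\ref{pro:fundY}.

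For part (1), I would take $p=1$ and $q=r=\infty$. Lemma \ref{lmm:PQ} delivers $\|P(\cdot,t)\|_1 = 1$ and $\|Y(\cdot,t)\|_1 = t^{\beta-1}\|Q(\cdot,t)\|_1 = t^{\beta-1}/\Gamma(\beta)$, giving the first two estimates with the asserted sharp constants. The gradient inequalities follow from the $p=1$ cases of \eqref{lpestimate} and \eqref{eq:gradYLp}, namely $\|\nabla P(\cdot,t)\|_1 \le Ct^{-\beta/\alpha}$ and $\|\nabla Y(\cdot,t)\|_1 \le Ct^{-\beta/\alpha + \beta - 1}$.

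For parts (2)--(5) the work is bookkeeping with exponents. Substituting $\tfrac{1}{p} = 1 + \tfrac{1}{r} - \tfrac{1}{q}$ into the time exponent $-\tfrac{n\beta}{\alpha}(1-\tfrac{1}{p})$ of \eqref{Lp-estimate} collapses it exactly to $-\tfrac{n\beta}{\alpha}(\tfrac{1}{q}-\tfrac{1}{r})$, matching part (2). The admissibility condition $p\in[1,\kappa_1)$ then translates into $r\ge q$ at the lower end (since $p\ge 1$) and $r<qn/(n-q\alpha)=\theta_1$ at the upper end (since $p<\kappa_1 = n/(n-\alpha)$ rearranges to $\tfrac{1}{r}>\tfrac{1}{q}-\tfrac{\alpha}{n}$). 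The same substitution handles parts (3), (4), (5) by using \eqref{Lp-estimateY}, \eqref{lpestimate} and \eqref{eq:gradYLp} in place of \eqref{Lp-estimate}; in every case the critical $r$-value $\theta_i$ is the image of the critical $p$-value $\kappa_i$ under the Young relation. The sharp $r=q$ constants $1$ and $1/\Gamma(\beta)$ again come from $\|P\|_1=1$ and $\|Q\|_1=1/\Gamma(\beta)$, and the $r=\infty$ cases in the sub-critical regimes ($n<q\alpha$, $n<2q\alpha$, $n<q(\alpha-1)$, $n<q(2\alpha-1)$) use that the corresponding $\kappa_i$ equals $\infty$, so the genuine uniform bounds on $P$, $Y$, $\nabla P$, $\nabla Y$ available from Lemma \ref{lmm:asymp} and Propositions \ref{pro:fundP}--\ref{pro:fundY} can be fed directly into Young.

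The only delicate point, and not a serious obstacle, is the excluded endpoint $r=\theta_i$: it corresponds to $p=\kappa_i$, where Propositions \ref{pro:fundP}--\ref{pro:fundY} furnish only weak-$L^p$ bounds and the standard Young's inequality fails. Since the statement excludes this endpoint, no complication arises; recovering it would require the Lorentz-space (Hardy--Littlewood--Sobolev) refinement of Young's inequality, but this is unnecessary for the present proposition.
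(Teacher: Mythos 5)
Your proposal is correct and follows essentially the same route as the paper: both reduce every estimate to Young's convolution inequality applied with the kernel bounds $\|P\|_1=1$, $\|Y\|_1=t^{\beta-1}/\Gamma(\beta)$, and the $L^p$ estimates of $P$, $\nabla P$, $Y$, $\nabla Y$ from Lemma \ref{lmm:PQ} and Propositions \ref{pro:fundP}--\ref{pro:fundY}, with the exponent bookkeeping $1-\tfrac{1}{p}=\tfrac{1}{q}-\tfrac{1}{r}$ translating the admissibility range $p\in[1,\kappa_i)$ into $r\in[q,\theta_i)$. Your remark that the endpoint $r=\theta_i$ fails only because the kernels lie merely in weak $L^{\kappa_i}$ is also consistent with the paper's treatment.
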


\begin{proof}
Recall that Young's inequality says that if $r,p,q\in [1,\infty]$ satisfying $\frac{1}{r}+1=\frac{1}{p}+\frac{1}{q}$, then
\[
\|f*g\|_r\le \|f\|_p\|g\|_q.
\]

(1). The estimates here follow directly by setting $f=u$, $g=P, Y,\nabla P,\nabla Y$ respectively, $r=p=\infty$ and $q=1$. Recall that $\|P\|_1=1$ and $\|Y\|_1=t^{\beta-1}\|Q\|_1=\frac{1}{\Gamma(\beta)}t^{\beta-1}$. The bounds for $\|\nabla P\|_1$ and $\|\nabla Y\|_1$ follow from \eqref{lpestimate} and \eqref{eq:gradYLp}.

The proofs for (2)-(5) are similar by setting $f=u$, $g=P, Y,\nabla P,\nabla Y$ respectively. In particular, we choose $p\in [1, \kappa_i)$. Then, we have
$1-\frac{1}{p}=\frac{1}{q}-\frac{1}{r}$. Clearly, as long as $\frac{q}{q-1}<\kappa_i$, the estimate holds for all $r\in [q, \infty]$. If $\frac{q}{q-1}>\kappa_i$, which happens only if $\kappa_i<\infty$, then $q<\frac{\kappa_i}{\kappa_i-1}$  and $r<\frac{1}{1/\kappa_i+1/q-1}$.
This then gives the desired results.
\end{proof}
Using the above results, we now establish the time continuity of the fundamental solutions:
\begin{prop}\label{pro:timecontinuity}
Let $\Phi(x):=P(x, 1)$ and $\Psi(x):=Y(x, 1)=Q(x, 1)$. Then, we have
\begin{enumerate}[(i)]
\item $\Phi\in L^p(\mathbb{R}^n)$ when $p\in [1, \kappa_1)$, while $\nabla \Phi\in L^p(\mathbb{R}^n)$ when $p\in [1, \kappa_2)$. $\Psi\in L^p(\mathbb{R}^n)$ when  $p\in [1, \kappa_3)$, while $\nabla\Psi\in L^p(\mathbb{R}^n)$ when $p\in [1, \kappa_4)$.
\item We have the following formulas for $P,Q,Y$:
\begin{gather}
P(x, t)=t^{-\frac{n\beta}{\alpha}}\Phi\left(\frac{x}{t^{\beta/\alpha}}\right),~ Q(x, t)=t^{-\frac{n\beta}{\alpha}}\Psi\left(\frac{x}{t^{\beta/\alpha}}\right), ~ Y(x, t)=t^{-\frac{n\beta}{\alpha}+\beta-1}\Psi\left(\frac{x}{t^{\beta/\alpha}}\right).
\end{gather}
Consequently, $P(x, t)\in C((0, \infty), L^p(\mathbb{R}^n))$ for $p\in [1, \kappa_1)$,
$\nabla P\in C((0,\infty), L^p(\mathbb{R}^n))$ for $p\in [1, \kappa_2)$,
$Y\in  C((0, \infty), L^p(\mathbb{R}^n))$ for $p\in [1, \kappa_3)$
and $\nabla Y\in C((0, \infty), L^p(\mathbb{R}^n))$ for $p\in [1, \kappa_4)$.
\item For any $u\in L^q(\mathbb{R}^n)$ with $q\in [1,\infty)$, $t\mapsto S_{\alpha}^{\beta}u \in C([0,\infty), L^q(\mathbb{R}^n))$.
\end{enumerate}
\end{prop}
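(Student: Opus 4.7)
The plan is to prove the three parts in order, each built on the estimates and identities already available.

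For part (i), I would simply specialize the $L^p$ estimates in Proposition \ref{pro:fundP} and Proposition \ref{pro:fundY} to $t = 1$. Since $\Phi(x)=P(x,1)$ and $\Psi(x)=Q(x,1)=Y(x,1)$, the four inclusions $\Phi\in L^p$ for $p\in[1,\kappa_1)$, $\nabla\Phi\in L^p$ for $p\in[1,\kappa_2)$, $\Psi\in L^p$ for $p\in[1,\kappa_3)$, and $\nabla\Psi\in L^p$ for $p\in[1,\kappa_4)$ are immediate. No nontrivial work is involved.

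For part (ii), I would derive the scaling identities from the Fourier definitions. Starting from $\mathcal{F}P(\cdot,t)(\xi)=E_\beta(-|\xi|^\alpha t^\beta)$ and performing the substitution $\xi = t^{-\beta/\alpha}\eta$ inside the Fourier inversion integral yields $P(x,t)=t^{-n\beta/\alpha}\Phi(x/t^{\beta/\alpha})$. The same computation gives the scaling for $Q$, and multiplying by $t^{\beta-1}$ produces the identity for $Y$. To deduce continuity $t\mapsto P(\cdot,t)\in C((0,\infty), L^p)$ for $p\in[1,\kappa_1)$, I would fix $s>0$, write the difference as
\[
P(\cdot,t)-P(\cdot,s)=\bigl(t^{-n\beta/\alpha}-s^{-n\beta/\alpha}\bigr)\Phi(\cdot/t^{\beta/\alpha}) + s^{-n\beta/\alpha}\bigl[\Phi(\cdot/t^{\beta/\alpha})-\Phi(\cdot/s^{\beta/\alpha})\bigr],
\]
and bound the $L^p$ norm of each piece: the first is handled by the change of variables and the continuity of the scalar prefactor, the second by the standard fact that dilations $\lambda\mapsto \Phi(\lambda\,\cdot)$ are continuous in $L^p$ at $\lambda=1$ whenever $\Phi\in L^p$ and $p<\infty$. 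The statements for $\nabla P$, $Y$, $\nabla Y$ follow the same pattern using the corresponding scalings obtained from differentiating the identities for $P$ and $Q$.

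For part (iii), continuity on $(0,\infty)$ is routine: by Young's convolution inequality and the $p=1$ case of (ii),
\[
\|S_\alpha^\beta(t_1)u-S_\alpha^\beta(t_2)u\|_q \le \|P(\cdot,t_1)-P(\cdot,t_2)\|_1\,\|u\|_q \to 0
\]
as $t_1\to t_2>0$. The genuinely new point is continuity at $t=0$. Here I would use that $\{P(\cdot,t)\}_{t>0}$ is an approximate identity: Lemma \ref{lmm:PQ} gives $P\ge 0$ with $\int P(x,t)\,dx=1$, and the scaling in (ii) shows that $P(\cdot,t)$ concentrates at the origin. Using the change of variables $z=(x-y)/t^{\beta/\alpha}$ inside $P(\cdot,t)*u$, one obtains
\[
(S_\alpha^\beta(t)u-u)(x)=\int_{\mathbb{R}^n}\Phi(z)\bigl(u(x-t^{\beta/\alpha}z)-u(x)\bigr)dz,
\]
and Minkowski's inequality for integrals yields
\[
\|S_\alpha^\beta(t)u-u\|_q\le \int_{\mathbb{R}^n}|\Phi(z)|\,\|u(\cdot-t^{\beta/\alpha}z)-u\|_q\,dz.
\]
For each fixed $z$ the integrand tends to $0$ as $t\to 0^+$ by $L^q$ continuity of translations (crucially requiring $q<\infty$), and it is dominated by $2|\Phi(z)|\,\|u\|_q\in L^1$. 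Dominated convergence then finishes the proof. The mildly delicate step is this last dominated convergence argument; everything else is standard Fourier-analytic bookkeeping.
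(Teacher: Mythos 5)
Your proposal is correct and follows essentially the same route as the paper: part (i) by specializing Propositions \ref{pro:fundP} and \ref{pro:fundY} to $t=1$, part (ii) by the Fourier scaling identity together with $L^p$-continuity of dilations, and part (iii) by Young's inequality away from $t=0$ and the standard approximate-identity (mollification) argument at $t=0$, which you have merely written out in more detail than the paper does. No gaps.
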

\begin{proof}
(i). The claims follow from Proposition \ref{pro:fundP}
and Proposition \ref{pro:fundY}.

(ii).  Since we have $(\mathcal{F}P)(\xi, t)=E_{\beta}(-|\xi|^{\alpha}t^{\beta})$, it is clear that
\[
P(x, t)=\mathcal{F}^{-1}E_{\beta}(-|\xi|^{\alpha}t^{\beta})
=t^{-\frac{n\beta}{\alpha}}\mathcal{F}^{-1}(E_{\beta}(-|\cdot|^{\alpha}))(\frac{x}{t^{\beta/\alpha}})
=t^{-\frac{n\beta}{\alpha}}\Phi\left(\frac{x}{t^{\beta/\alpha}}\right).
\]
Similarly,  we have
\[
Q(x, t)=t^{-\frac{n\beta}{\alpha}}\Psi\left(\frac{x}{t^{\beta/\alpha}}\right),
\]
and hence the formula for $Y(x, t)$ follows from \eqref{eq:defY}.

It is a well-known fact that if $f\in L^q(\mathbb{R}^n)$ with $q\in [1,\infty)$, we have $\|\lambda^nf(\lambda x)-f\|_{L^p}\to 0$ as $\lambda\to 1$ (This can be proved by the standard process of approximating $L^p$ functions with $C_c^{\infty}$ functions). The claims then follow.

(iii). The fact $t\mapsto S_{\alpha}^{\beta}u \in C((0,\infty), L^q(\mathbb{R}^n))$ is  obvious from  the results in (ii). The continuity of $S_{\alpha}^{\beta}u$ at $t=0$ is a standard consequence of mollification, given the expression of $P$ in (ii).

\end{proof}

\subsection{Weighted estimates}\label{WE}
We consider the weighted estimates of the fundamental solutions in the weighted space $L_\nu^\infty(\mathbb{R}^n)$ (see equation \eqref{WLI}).
\begin{prop}\label{pro:weightedfund}
Assume $0<\beta<1$, $1<\alpha\le 2$ and $u_0\in L_{n+\alpha}^\infty(\mathbb{R}^n)\subset L^1(\mathbb{R}^n) \cap L^\infty(\mathbb{R}^n)$. Then, there is $C>0$ such that
\begin{equation}\label{3.21}
\|S_\alpha^\beta(t) u_0\|_{L_{n+\alpha}^\infty}\leq C\|u_0\|_{L_{n+\alpha}^\infty}+Ct^\beta\|u_0\|_1.
\end{equation}
\begin{equation}\label{3.22}
\|\nabla T_\alpha^\beta(t) u_0\|_{L_{n+\alpha}^\infty}\leq Ct^{-\frac{\beta}{\alpha}+\beta-1}\|u_0\|_{L^\infty_{n+\alpha}}+Ct^{2\beta-\frac{\beta}{\alpha}-1}\|u_0\|_1.
\end{equation}
\end{prop}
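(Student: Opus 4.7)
The key reduction is the elementary inequality
\[
(1+|x|)^{n+\alpha}\le C\bigl(1+|x-y|^{n+\alpha}+|y|^{n+\alpha}\bigr),
\]
obtained from $|x|\le|x-y|+|y|$ and $(a+b+c)^{n+\alpha}\le 3^{n+\alpha-1}(a^{n+\alpha}+b^{n+\alpha}+c^{n+\alpha})$ (valid since $n+\alpha>1$). Inserting this into the convolution representations $S_\alpha^\beta(t)u_0(x)=\int P(x-y,t)u_0(y)\,dy$ and $\nabla T_\alpha^\beta(t)u_0(x)=\int\nabla Y(x-y,t)u_0(y)\,dy$ (cf.\ \eqref{eq:mildreformulation}) decomposes each weighted evaluation into three pieces $J_1+J_2+J_3$. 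The pieces $J_1$ and $J_3$ are immediate from $\|u_0\|_\infty\le\|u_0\|_{L^\infty_{n+\alpha}}$ and $\||y|^{n+\alpha}u_0\|_\infty\le\|u_0\|_{L^\infty_{n+\alpha}}$ together with the $L^1$ kernel norms $\|P(\cdot,t)\|_1=1$ (Lemma \ref{lmm:PQ}) and $\|\nabla Y(\cdot,t)\|_1\le Ct^{\beta-1-\beta/\alpha}$, the latter from Proposition \ref{pro:fundY}(2) with $p=1$ (admissible since $\kappa_4>1$ whenever $n\ge 2$ and $\alpha\in(1,2]$).

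The middle piece satisfies $J_2\le \||z|^{n+\alpha}K(z,t)\|_\infty\|u_0\|_1$ for $K\in\{P,\nabla Y\}$, so the proposition reduces to the pointwise bounds
\[
\bigl\||z|^{n+\alpha}P(z,t)\bigr\|_\infty\le Ct^{\beta},\qquad \bigl\||z|^{n+\alpha}\nabla Y(z,t)\bigr\|_\infty\le Ct^{2\beta-1-\beta/\alpha}.
\]
To establish these I split $\mathbb{R}^n$ into the far regime $|z|\ge t^{\beta/\alpha}$ and the near regime $|z|<t^{\beta/\alpha}$ and invoke Lemma \ref{lmm:asymp}. In the far regime the weight $|z|^{n+\alpha}$ exactly cancels the algebraic tail $|z|^{-n-\alpha}$ of $P$, producing $Ct^\beta$; for $\nabla Y$ there is a leftover $|z|^{-1}\le t^{-\beta/\alpha}$ which converts to the desired $Ct^{2\beta-1-\beta/\alpha}$. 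The case $\alpha=2$ is covered identically since the Gaussian-type factor dominates any polynomial. In the near regime I substitute $w=|z|^\alpha t^{-\beta}\in(0,1]$, so that $|z|^k=t^{k\beta/\alpha}w^{k/\alpha}$, and read off the $t$-power in each sub-case of \eqref{eq:Psmall} and \eqref{eq:gradY2}; a direct arithmetic check shows that the exponent always collapses to $\beta$, respectively $2\beta-1-\beta/\alpha$, independently of which sub-case is active.

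Summing $J_1+J_2+J_3$ and taking $\sup_x$ yields \eqref{3.21} and \eqref{3.22}. The main obstacle is the near-regime verification for $\nabla Y$: the three sub-cases of \eqref{eq:gradY2} have different algebraic forms (including one with a bare factor $|z|$ and one with a logarithm), so I must confirm that after multiplying by $|z|^{n+\alpha}$ they all line up to the same $t$-power. The borderline logarithmic case $n=2\alpha-2$ (and the analogous case $\alpha=n=2$ for $P$) is the trickiest point and is handled via the uniform bound $w^{r}|\log w|\le C(r)$ on $(0,1]$ for any $r>0$, which absorbs the singular logarithm into the polynomial factor before the exponents can be matched up.
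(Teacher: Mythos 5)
Your proof is correct and follows essentially the same route as the paper: the elementary weight inequality $(1+|x|)^{n+\alpha}\le C(1+|y|)^{n+\alpha}+C|x-y|^{n+\alpha}$ to split the convolution, the pointwise bounds $\||z|^{n+\alpha}P(z,t)\|_\infty\le Ct^{\beta}$ and $\||z|^{n+\alpha}\nabla Y(z,t)\|_\infty\le Ct^{2\beta-1-\beta/\alpha}$ obtained by case analysis of Lemma \ref{lmm:asymp} in the two regimes $|z|^{\alpha}\gtrless t^{\beta}$, and the $L^1$ kernel norms $\|P\|_1=1$ and $\|\nabla Y\|_1\le Ct^{\beta-1-\beta/\alpha}$. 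Your three-term splitting of the weight and your explicit sub-case verification (including the logarithmic borderline cases) are inessential elaborations of the paper's two-term version, and all the exponent computations check out.
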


\begin{proof}
For $|x|^{\alpha}t^{-\beta}\geq 1$ and $1<\alpha\le 2$, \eqref{eq:Pfaraway} implies that
\begin{equation}\label{3.27}
|x|^{n+\alpha}|P(x,t)|\leq Ct^\beta.
\end{equation}
For $|x|^{\alpha}t^{-\beta} \leq 1$, we use \eqref{eq:Psmall} to obtain
\begin{equation}\label{3.28}
|x|^{n+\alpha}|P(x,t)|\leq C t^{\beta}.
\end{equation}
(For example, if $\alpha<n$, we have $
|x|^{n+\alpha}|P(x,t)|\leq C|x|^{2\alpha}t^{-\beta}\le Ct^\beta.$)

Hence, due to \eqref{3.27} and \eqref{3.28}, we have
\begin{equation}\label{weightedP}
\||x|^{n+\alpha}P(x,t)\|_{\infty}\leq Ct^\beta.
\end{equation}

Note that there exists $C>0$ such that
\begin{equation}\label{inequality}
(1+|x|)^{n+\alpha}\leq C(1+|y|)^{n+\alpha}+C|x-y|^{n+\alpha}.
\end{equation}
It then follows from \eqref{inequality} and \eqref{3.28} that
\begin{eqnarray}
\begin{aligned}\label{BOFS2}
\|S_\alpha^\beta(t)u_0\|_{L^\infty_{\alpha+n}}&=\mbox{ess}\sup_{x\in\mathbb{R}^n}|S_\alpha^\beta(t)u_0|(1+|x|)^{n+\alpha}\\
&\leq C\mbox{ess}\sup_{x\in\mathbb{R}^n}\int_{\mathbb{R}^n} P(x-y,t)u_0(y)(1+|y|)^{n+\alpha}dy\\
&+C\mbox{ess}\sup_{x\in\mathbb{R}^n}\int_{\mathbb{R}^n}P(x-y,t)|x-y|^{n+\alpha}u_0(y)dy\\
&\leq C\|u_0\|_{L^\infty_{n+\alpha}}+Ct^{\beta}\|u_0\|_1.
\end{aligned}
\end{eqnarray}

Using \eqref{eq:gradY1} and \eqref{eq:gradY2}, we similarly find
\begin{equation}\label{eq:weightedgradY}
\| |x|^{n+\alpha}\nabla Y(x,t)\|_{\infty}\leq Ct^{2\beta-\frac{\beta}{\alpha}-1}.
\end{equation}
Further, equation \eqref{eq:gradYLp} implies
\begin{equation}\label{eq:gradYL1}
\|\nabla Y(x,t)\|_1\le C t^{-\frac{\beta}{\alpha}+\beta-1}.
\end{equation}
Therefore, by  \eqref{inequality}, \eqref{eq:weightedgradY} and \eqref{eq:gradYL1}, we similarly have
\begin{eqnarray}
\begin{aligned}\label{BOFS3}
\|\nabla T_\alpha^\beta(t)u_0\|_{L^\infty_{\alpha+n}}&=\mbox{ess}\sup_{x\in\mathbb{R}^n}|(\nabla Y)*u_0|(1+|x|)^{n+\alpha}\\
&\leq C\|\nabla Y(t)\|_1\|u_0\|_{L^\infty_{n+\alpha}}+C\| |x|^{n+\alpha}\nabla Y(x,t)\|_{\infty}\|u_0\|_1\\
&\leq Ct^{-\frac{\beta}{\alpha}+\beta-1}\|u_0\|_{L^\infty_{n+\alpha}}+Ct^{2\beta-\frac{\beta}{\alpha}-1}\|u_0\|_1.
\end{aligned}
\end{eqnarray}
\end{proof}

%================================================================================================================================

\section{Existence and uniqueness of mild solutions}\label{sec:exisuniq}

In this section, based on the $L^r-L^q$ estimates in subsection \ref{sec:Lpfund}, we construct the local existence and uniqueness of mild solution to equation \eqref{generalized Keller-Segel} for initial data $\rho_0\in L^p(\mathbb{R}^n)$ and the global existence for small initial data $\rho_0\in L^{p_c}(\mathbb{R}^n)$, where $p_c=\frac{n}{\alpha+\gamma-2}$.  Based on the weighted estimates in subsection \ref{WE}, we establish the existence and uniqueness of mild solutions in the weighed space $C([0,T],L^\infty_{n+\alpha}(\mathbb{R}^n))$. Finally, we provide the proof of integrability and integral preservation for $\rho_0\in L^1(\mathbb{R}^n)\cap L^p(\mathbb{R}^n)$ and $\rho_0\in L^1(\mathbb{R}^n)\cap L^{p_c}(\mathbb{R}^n)$.

Before we make the analysis, let us perform scaling. Suppose that $u(x, t)$ satisfies the equation \eqref{generalized Keller-Segel}.
Since \eqref{generalized Keller-Segel} formally preserves mass, we consider first the mass-preserving scaling as
\[
u_{\lambda}(x, t)=\lambda^n u(\lambda x, \lambda^b t).
\]
Then,  $(-\Delta)^{\frac{\alpha}{2}}u_{\lambda}=\lambda^{n+\alpha}(-\Delta)^{\frac{\alpha}{2}}u$ while $\nabla\cdot (u_{\lambda}B(u_{\lambda}))=\lambda^{2n+2-\gamma}\nabla\cdot(uB(u))$. Clearly, if $n+\alpha>2n+2-\gamma$, or
\[
n<\alpha+\gamma-2,
\]
the diffusion is stronger and this case is referred to the sub-critical case (in terms of mass concentration or diffusion). For usual PDEs, in the subcritical case, all $L^1$ initial data will lead to global existence of solutions. For our model, since we have used \eqref{eq:B} and assumed $\gamma\in (1, n]$, there will be no sub-critical cases. If we allow $\gamma>n$ (of course \eqref{eq:B} should be replaced by $B(\rho)=\nabla (-\Delta)^{-\gamma/2}\rho$), there can be sub-critical cases. Since sub-critical case is not our focus and the paper is already too long, we leave it for future.

If $n>\alpha+\gamma-2$,
the aggregation term can be strong, and this case will be referred to the super-critical case.  For super-critical case, there is a critical $L^p$ space. To see this, let us consider another scaling which yields another solution:
\[
u^{\lambda}=\lambda^{a}u(\lambda x, \lambda^b t).
\]
We have $_0^cD^{\beta}_tu^{\lambda}=\lambda^a\lambda^{\beta b}{_0^cD^{\beta}_tu}$, $(-\Delta)^{\frac{\alpha}{2}}u^{\lambda}=\lambda^{a+\alpha}(-\Delta)^{\frac{\alpha}{2}}u$ while $\nabla\cdot (u^{\lambda}B(u^{\lambda}))=\lambda^{2a+2-\gamma}\nabla\cdot(uB(u))$. For $u^{\lambda}$ to be a solution, we then find
\[
a+\beta b=a+\alpha,~a=\alpha+\gamma-2.
\]
In other words,
\[
u^{\lambda}=\lambda^{\alpha+\gamma-2}u(\lambda x, \lambda^{\frac{\alpha}{\beta}}t)
\]
is also a solution to \eqref{generalized Keller-Segel}. Under the transformation $u\mapsto u^{\lambda}$, the $L^{\frac{n}{\alpha+\gamma-2}}$ norm is invariant. Hence, the critical index should be
\begin{gather}\label{criticalindex}
p_c=\frac{n}{\alpha+\gamma-2}.
\end{gather}
 As we will see in Section \ref{sec:blowup}, for supercrtical case and critical case ($n=\alpha+\gamma-2$), large $L^1$ data can lead to blowup behaviors. Since the assumption $n\ge 2$ and $\gamma\le n$ implies that $p_c\ge 1$, we need to impose the small initial data in $L^{p_c}$ in order to obtain the global existence (Theorem 4.2).

To prove the existence of mild solutions, we first recall the following fixed point theorem
\begin{lem}\cite[ Lemma 3.1]{BK} \label{lmm:basicexis}
 Let $(X,\|\cdot\|_X)$ be a Banach space and $H: X\times X\rightarrow X$ be a bounded bilinear form such that for all $u_1,u_2\in X$ and a constant $\eta>0$,
\begin{equation}\label{BFH}
||H(u_1,u_2)||_X\leq \eta\|u_1||_X\|u_2||_X.
\end{equation}
If $0<\epsilon<\frac{1}{4\eta}$ and $v\in X$ such that $\|v\|_X\leq\epsilon$, then the equation $u=v+H(u,u)$ has a solution in $X$ satisfying $\|u\|_X\leq 2\epsilon$. In addition, this solution is the unique one in $\overline{B}(0,2\epsilon)$.
\end{lem}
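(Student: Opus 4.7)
The plan is to prove the lemma by a standard Picard iteration on the closed ball $\overline{B}(0,2\epsilon)$, viewing the map $\Phi(u) := v + H(u,u)$ as a contraction there. I would set $u_0 := v$ and define $u_{k+1} := v + H(u_k,u_k)$ for $k \geq 0$, and carry out the argument in three steps: stability of the ball, contraction, and uniqueness.

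First, an induction on $k$ shows $\|u_k\|_X \leq 2\epsilon$ for all $k$. The base case holds because $\|v\|_X \leq \epsilon \leq 2\epsilon$. For the inductive step, the bilinear bound \eqref{BFH} gives
\begin{equation*}
\|u_{k+1}\|_X \leq \|v\|_X + \eta\|u_k\|_X^2 \leq \epsilon + 4\eta\epsilon^2 \leq 2\epsilon,
\end{equation*}
where the last inequality uses $4\eta\epsilon \leq 1$. Second, to get convergence I use the telescoping identity
\begin{equation*}
u_{k+1} - u_k = H(u_k,u_k) - H(u_{k-1},u_{k-1}) = H(u_k - u_{k-1},\, u_k) + H(u_{k-1},\, u_k - u_{k-1}),
\end{equation*}
which together with \eqref{BFH} and the uniform bound $\|u_k\|_X \leq 2\epsilon$ yields
\begin{equation*}
\|u_{k+1} - u_k\|_X \leq \eta(\|u_k\|_X + \|u_{k-1}\|_X)\|u_k - u_{k-1}\|_X \leq 4\eta\epsilon\,\|u_k - u_{k-1}\|_X.
\end{equation*}
Since $4\eta\epsilon < 1$, the sequence $\{u_k\}$ is Cauchy in the complete space $X$; let $u$ denote its limit. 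Then $\|u\|_X \leq 2\epsilon$, and continuity of $H$ (which is automatic from the bilinear estimate \eqref{BFH}) allows passage to the limit in $u_{k+1} = v + H(u_k,u_k)$, yielding $u = v + H(u,u)$.

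Third, for uniqueness in $\overline{B}(0,2\epsilon)$, if $u$ and $u'$ are both fixed points there, the same telescoping trick gives
\begin{equation*}
\|u - u'\|_X \leq \eta(\|u\|_X + \|u'\|_X)\|u - u'\|_X \leq 4\eta\epsilon\,\|u - u'\|_X,
\end{equation*}
and since $4\eta\epsilon < 1$ this forces $u = u'$. The only mild obstacle in the argument is the bookkeeping that keeps every iterate inside $\overline{B}(0,2\epsilon)$; this is precisely why the threshold $\epsilon < 1/(4\eta)$ is chosen, since it simultaneously ensures the ball is invariant under $\Phi$ and that $\Phi$ is a strict contraction on it. Everything else is a routine application of the Banach fixed point principle.
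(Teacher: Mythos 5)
Your proof is correct and is the standard Picard/Banach fixed point argument for this abstract bilinear lemma; the paper itself does not reproduce a proof but simply cites \cite[Lemma 3.1]{BK}, where essentially the same iteration $u_{k+1}=v+H(u_k,u_k)$ with the bilinear splitting $H(a,a)-H(b,b)=H(a-b,a)+H(b,a-b)$ is used. All three steps (invariance of $\overline{B}(0,2\epsilon)$, the contraction factor $4\eta\epsilon<1$, and uniqueness in the ball) are carried out correctly.
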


For our model, we are going to define the bilinear form to be
\begin{gather}\label{eq:BilinearH}
H(u, v)=-\int_0^t \nabla\cdot(T_{\alpha}^{\beta}(t-s)(u(s)B(v(s))))ds.
\end{gather}

\subsection{Existence in $L^p$ spaces}

In this subsection, we investigate the existence of the solution to \eqref{generalized Keller-Segel} when the initial data is in $L^p$ space.
\begin{thm}\label{thm:localexis}
Suppose $n\ge 2$, $0<\beta<1$, $1<\alpha\le 2$ and $1<\gamma\leq n$. Let $p\in (p_c,\infty)\cap [\frac{2n}{n+\gamma-1}, \frac{n}{\gamma-1})$. Then, for any $\rho_0\in L^p(\mathbb{R}^n)$, there exists $T>0$ such that the equation \eqref{generalized Keller-Segel} admits a unique mild solution in $C([0, T]; L^p(\mathbb{R}^n))$ with initial value $\rho_0$ in the sense of Definition \ref{def:mild}. Define the largest time of existence
\[
T_b=\sup\{T>0: \text{\eqref{generalized Keller-Segel} has a unique mild solution in }C([0, T]; L^p(\mathbb{R}^n))\}.
\]
Then if $T_b<\infty$, we have
\[
\limsup_{t\to T_b^-}\|\rho(\cdot, t)\|_{p}=+\infty.
\]
\end{thm}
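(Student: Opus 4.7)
The plan is to apply the bilinear fixed-point Lemma \ref{lmm:basicexis} in the Banach space $X_T := C([0,T]; L^p(\mathbb{R}^n))$ equipped with the sup-norm, taking the bilinear form $H(u,v)$ from \eqref{eq:BilinearH} and the ``affine part'' $v_0(t) := S_\alpha^\beta(t)\rho_0$. By Proposition \ref{pro:Lpofevoloperator}(2) with $r=q=p$ one has $\|v_0(t)\|_p \le \|\rho_0\|_p$, and Proposition \ref{pro:timecontinuity}(iii) shows $v_0 \in X_T$ with $\|v_0\|_{X_T} \le \|\rho_0\|_p$.

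The core step is the bilinear estimate $\|H(u,v)\|_{X_T} \le C T^{\sigma}\|u\|_{X_T}\|v\|_{X_T}$ for some $\sigma > 0$. Inside the convolution integral defining $H$, I would chain three inequalities. First, since the kernel of $B$ is homogeneous of degree $-(n-\gamma+1)$, Lemma \ref{lmm:hls} yields $\|B(v(s))\|_q \le C\|v(s)\|_p$ with $\tfrac{1}{q} = \tfrac{1}{p} - \tfrac{\gamma-1}{n}$, requiring $1 < p < \tfrac{n}{\gamma-1}$. Next, H\"older gives $\|u(s)B(v(s))\|_m \le \|u(s)\|_p\|B(v(s))\|_q$ with $\tfrac{1}{m} = \tfrac{2}{p} - \tfrac{\gamma-1}{n}$; the hypothesis $p \ge \tfrac{2n}{n+\gamma-1}$ is precisely $m \ge 1$. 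Finally, Proposition \ref{pro:Lpofevoloperator}(5) applied with $q=m$, $r=p$ yields
\begin{align*}
\|\nabla T_\alpha^\beta(t-s)(u(s)B(v(s)))\|_p \le C(t-s)^{\sigma - 1}\|u(s)\|_p\|v(s)\|_p,
\end{align*}
where a direct calculation gives $\sigma - 1 = \tfrac{\beta}{\alpha}\bigl(\alpha + \gamma - 2 - \tfrac{n}{p}\bigr) - 1$. The assumption $p > p_c$ is exactly what makes $\sigma > 0$, so the singularity is integrable, and integrating in $s$ followed by taking sup in $t \in [0,T]$ gives the claimed bound. One also has to verify that $p < \theta_4(m)$ in part (5) of Proposition \ref{pro:Lpofevoloperator}; this reduces to $p > \tfrac{n}{2\alpha+\gamma-2}$, which is automatic from $p > p_c$. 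Time continuity of $H(u,v)$ in $L^p$ follows from a standard Lebesgue dominated-convergence argument using $\sigma > 0$ and the continuity properties in Proposition \ref{pro:timecontinuity}.

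Choosing $T$ so small that $(4C/\sigma)T^{\sigma}\|\rho_0\|_p < 1$, Lemma \ref{lmm:basicexis} produces a unique mild solution in the closed ball $\overline{B}(0, 2\|\rho_0\|_p) \subset X_T$. To upgrade to uniqueness in the full space $C([0,T]; L^p)$, I would argue on the largest subinterval $[0, T_*]$ on which two given solutions $\rho_1,\rho_2$ coincide: writing the difference as $\rho_1 - \rho_2 = H(\rho_1, \rho_1 - \rho_2) + H(\rho_1-\rho_2, \rho_2)$ and applying the bilinear estimate on a short interval after $T_*$ yields $\|\rho_1 - \rho_2\|_{X_{T_*+\delta}} \le C'\delta^\sigma \|\rho_1-\rho_2\|_{X_{T_*+\delta}}$, which forces $T_* = T$.

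For the blowup alternative, suppose $T_b < \infty$ and $M := \limsup_{t \to T_b^-}\|\rho(t)\|_p < \infty$. The local existence time provided above depends only on $\|\rho(t_0)\|_p$ via $T(t_0) \sim \|\rho(t_0)\|_p^{-1/\sigma}$, so picking $t_0$ near $T_b$ with $\|\rho(t_0)\|_p \le 2M$ yields a solution starting at $t_0$ over an interval of length bounded below by $T(2M) > 0$; uniqueness glues this to $\rho$ and extends it past $T_b$, contradicting maximality of $T_b$. The main technical obstacle is verifying that the four constraints $p>p_c$, $p>1$, $p \ge \tfrac{2n}{n+\gamma-1}$, and $p<\tfrac{n}{\gamma-1}$ line up consistently with the three inequalities in the chain, and that the subtler auxiliary hypothesis $p < \theta_4(m)$ is absorbed by $p > p_c$; this is the point at which the critical exponent $p_c = \tfrac{n}{\alpha+\gamma-2}$ manifests itself as the natural threshold.
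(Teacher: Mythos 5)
Your local existence and uniqueness argument is essentially the paper's: the same bilinear fixed point in $C([0,T];L^p)$, the same H\"older/Hardy--Littlewood--Sobolev chain with $\tfrac1m=\tfrac2p-\tfrac{\gamma-1}{n}$, the same exponent $\sigma=\tfrac{\beta}{\alpha}\bigl(\alpha+\gamma-2-\tfrac np\bigr)$, and the same bookkeeping showing that $p>p_c$ absorbs the $\theta_4$ constraint. Your uniqueness step (vanishing of the bilinear difference on the coincidence interval plus a contraction on a short interval beyond it) is a legitimate substitute for the paper's fractional Gronwall/comparison argument and is correct, because the memory integral over $[0,T_*]$ genuinely vanishes when the two solutions agree there.

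The gap is in the blowup alternative. You argue that if $\limsup_{t\to T_b^-}\|\rho(t)\|_p=:M<\infty$, one can ``restart'' from some $t_0$ near $T_b$ with existence time $T(2M)>0$ depending only on $\|\rho(t_0)\|_p$, and glue. This is the classical semigroup continuation, and it fails here: the equation is non-Markovian, so the solution on $[t_0,t_0+\tau]$ is \emph{not} determined by $\rho(t_0)$ alone --- both the Caputo derivative and the Duhamel formula \eqref{solutiontoequation} reference $t=0$, and the evolution past $t_0$ depends on the entire history $\rho|_{[0,t_0]}$. The paper explicitly flags this obstruction. The correct continuation is at the level of the integral equation: one shows first that $\rho$ is uniformly continuous in $L^p$ up to $T_b$ (so $\rho(T_b)$ is defined), then solves for $\tilde\rho(t)=\rho(T_b+t)$ the shifted equation
\begin{equation*}
\tilde{\rho}(t)=\Bigl(S_{\alpha}^{\beta}(t+T_b)\rho_0-\int_0^{T_b}\nabla_x\cdot\bigl(T_{\alpha}^{\beta}(T_b+t-s)\rho(s)B(\rho)(s)\bigr)\,ds\Bigr)-\int_0^{t}\nabla_x\cdot\bigl(T_{\alpha}^{\beta}(t-s)\tilde{\rho}(s)B(\tilde{\rho})(s)\bigr)\,ds,
\end{equation*}
where the bracketed memory term is a known forcing in $C([0,\infty);L^p)$, bounded using the assumed bound on $\|\rho\|_p$ over $[0,T_b)$; the fixed-point lemma then applies to this modified affine part. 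Your sketch needs to be replaced by (or augmented with) this construction; as written, ``a solution starting at $t_0$'' is not a well-posed object for this equation.
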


\begin{proof}
By Proposition \ref{pro:Lpofevoloperator} and Proposition \eqref{pro:timecontinuity}, $S_{\alpha}^{\beta}(t)\rho_0\in C([0, T]; L^p(\mathbb{R}^n))$ with
\[
\|S_{\alpha}^{\beta}(t)\rho_0\|_{C([0, T]; L^p(\mathbb{R}^n))}\le \|\rho_0\|_p.
\]
We now consider the second term in \eqref{eq:mildreformulation}. By Proposition \ref{pro:Lpofevoloperator}, we have
\begin{gather}\label{eq:Hlpest}
\begin{split}
\|H(u, v)\|_p &\le \int_0^t \|\nabla\cdot( T_{\alpha}^{\beta}(t-s) u(s)B(v(s)))\|_p ds \\
&\le C\int_0^t(t-s)^{-\frac{n\beta}{\alpha}(\frac{1}{q}-\frac{1}{p})
-\frac{\beta}{\alpha}+\beta-1}\|u(s)B(v(s))\|_q\, ds,\\
&\le C\int_0^t(t-s)^{-\frac{n\beta}{\alpha}(\frac{1}{q}-\frac{1}{p})
-\frac{\beta}{\alpha}+\beta-1}\|u(s)\|_p\|B(v(s))\|_{\frac{pq}{p-q}}\, ds,
\end{split}
\end{gather}
provided $1\le q\le p<\theta_4$ as in Proposition \ref{pro:Lpofevoloperator}.

Choosing $q$ such that
\[
\frac{1}{p}-\frac{\gamma-1}{n}=\frac{p-q}{pq},
\]
or $\frac{1}{q}=\frac{2}{p}-\frac{\gamma-1}{n}$. If $\frac{1}{q}>\frac{1}{p}$ and $q\ge 1$, or
\[
1<\frac{2n}{n+\gamma-1}\le p<\frac{n}{\gamma-1},
\]
 then Hardy-Littlewood-Sobolev inequality implies that
\[
\|B(v)(s)\|_{\frac{pq}{p-q}}\le C\|v(s)\|_p.
\]

Clearly, if $p\ge \frac{2n}{2\alpha+\gamma-2}$, $\theta_4=\infty$. Otherwise, we need $p<\theta_4=\frac{n}{2n/p-\gamma-2\alpha+2}$.
This means that we need
\[
p>\frac{n}{2\alpha+\gamma-2},
\]
which is clearly true since $p>p_c$.

With these requirements, we find
\begin{gather}\label{eq:estimateH}
\sup_{0\le t\le T}\|H(u, v)\|_{p}
\le C\sup_{0\le t\le T}\int_0^t
(t-s)^{-\frac{n\beta}{\alpha}(\frac{1}{q}-\frac{1}{p})
-\frac{\beta}{\alpha}+\beta-1}\|u(s)\|_p\|v(s)\|_p\,ds.
\end{gather}

Since $p>p_c$  implies that $-\frac{n\beta}{\alpha}(\frac{1}{q}-\frac{1}{p})
-\frac{\beta}{\alpha}+\beta>0$,  we have
\[
\sup_{0\le t\le T}\|H(u, v)\|_{p}
\le C\|u\|_{C([0, T]; L^p(\mathbb{R}^n))}\|v\|_{C([0, T]; L^p(\mathbb{R}^n))} T^{-\frac{n\beta}{\alpha}(\frac{1}{q}-\frac{1}{p})
-\frac{\beta}{\alpha}+\beta}.
\]

Now, we need to verify that $H(u, v)\in C([0,T],L^p(\mathbb{R}^n))$. Choosing the same $q$  as above, again by H\"older inequality and Hardy-Littlewood-Sobolev inequality, we have that
\[
w(s):=u(s)B(v(s))\in C([0, T]; L^q(\mathbb{R}^n)).
\]
Now, choose $0\le t<t+\delta\le T$ for some $t>0$ and $\delta>0$. Then, we pick $\delta_1>0$ and have
\begin{multline*}
\|H(u,v)(t+\delta)-H(u,v)(t)\|_p
\le \left\|\int_{\max(0,t-\delta_1)}^{t+\delta}\nabla T_{\alpha}^{\beta}(t+\delta-s)w(s)\,ds \right\|_p\\
+\left\|\int_{\max(0,t-\delta_1)}^{t}\nabla T_{\alpha}^{\beta}(t-s)w(s)\,ds
\right\|_p
+\left\|\int_0^{\max(0, t-\delta_1)}\nabla( T_{\alpha}^{\beta}(t+\delta-s)-T_{\alpha}^{\beta}(t-s))w(s)\,ds \right\|_p.
\end{multline*}
Estimates of the first two terms are similar to the argument in \eqref{eq:Hlpest} and they are controlled by \[
C\|w\|_{C([0,T],L^q(\mathbb{R}^n))}(\delta+\delta_1)^{-\frac{n\beta}{\alpha}(\frac{1}{q}-\frac{1}{p})
-\frac{\beta}{\alpha}+\beta}.
\]
By Proposition \ref{pro:timecontinuity}, $\nabla Y\in C([\delta_1,T],L^r(\mathbb{R}^n))$ for $r\in [1,\kappa_4)$ and therefore $\nabla Y$ is uniformly continuous on $[\delta_1, T]$. Hence, the third term goes to zero as $\delta \to 0$. This verifies that $H(u, v)\in C([0,T],L^p(\mathbb{R}^n))$.

Choosing $T$ small enough, Lemma \ref{lmm:basicexis} applies and the existence follows.

Because our equation is non-Markovian so that we cannot apply the continuation technique while Lemma \ref{lmm:basicexis} only implies the uniqueness in short time. Instead, we provide another direct proof for the uniqueness. Suppose that we have two solution $\rho_1$ and $\rho_2$ on $[0, T]$.
Let $M=\max(\|\rho_1\|_{C([0, T]; L^p(\mathbb{R}^n))}, \|\rho_2\|_{C([0, T]; L^p(\mathbb{R}^n))})$. We define
\[
e(t)=\|\rho_1-\rho_2\|_{C([0, t]; L^p(\mathbb{R}^n)))}.
\]
Then, by \eqref{eq:mildreformulation}, \eqref{eq:BilinearH} and \eqref{eq:estimateH}, we have
\[
e(t)\le \sup_{0\le s\le t}\|H(\rho_1, \rho_1)(s)-H(\rho_2,\rho_2)(s)\|_p
\le MC\int_0^t(t-\tau)^{-\frac{n\beta}{\alpha}(\frac{1}{q}-\frac{1}{p})
-\frac{\beta}{\alpha}+\beta+1} e(\tau)\,d\tau.
\]
The comparison principle in \cite{fllx17} (Proposition 5) implies that $e(t)=0$.

We prove the last claim regarding $T_b$ by contradiction. Assume
$\limsup_{t\to T_b^-}\|\rho\|_{p}<\infty$.
Then $\sup_{t\in [0, T_b)}\|\rho(t)\|_{p}<\infty$. Following  the same approach as we show $H(u,v)\in C([0,\infty],L^p(\mathbb{R}^n))$, and noticing the fact that $S_{\alpha}^{\beta}\rho_0\in C([0, M], L^p(\mathbb{R}^n))$ (and thus uniformly continuous) for any $M>0$,  equation \eqref{eq:mildreformulation} indicates that for any $\epsilon>0$, there exists $\delta>0$ such that $\|\rho(t_1)-\rho(t_2)\|_{p}<\epsilon$ when $T_b-\delta<t_1<t_2<T_b$.  Hence, we can define $\rho(T_b)$ so that $\rho(\cdot)\in C([0, T_b], \mathbb{R}^n)$. Now, we consider the following equation about $\tilde{\rho}$:
\begin{multline*}
\tilde{\rho}(t)=\left( S_{\alpha}^{\beta}(t+T_b)\rho_0-\int_0^{T_b} \nabla_x\cdot(T_{\alpha}^{\beta}(T_b+t-s)\rho(s)B(\rho)(s))\,ds\right)\\
-\int_0^{t} \nabla_x\cdot(T_{\alpha}^{\beta}(t-s)\tilde{\rho}(s)B(\tilde{\rho})(s))\,ds.
\end{multline*}
The first term is in $C([0, \infty), L^p(\mathbb{R}^n))$  following  the same approach as we show $H(u,v)\in C([0,\infty],L^p(\mathbb{R}^n))$. Repeating what has been just done, this new integral equation has a unique solution in $C( [0, \delta_1]; L^p(\mathbb{R}^n))$ for some $\delta_1>0$. If we define $\rho(T_b+t)=\tilde{\rho}(t)$ for $t\in [0, \delta_1]$, then $\rho$ becomes a mild solution on $[0, T_b+\delta_1)$, which contradicts with the definition of $T_b$.
\end{proof}

By the $L^r-L^q$ estimate of $S_{\alpha}^{\beta}(t)u$ in Proposition \ref{pro:Lpofevoloperator}, we find that for $p_1\in [p, \theta_1)$
\begin{gather}
\sup_{0\le t\le T}(\|S_{\alpha}^{\beta}(t)u\|_p+t^{\frac{n\beta}{\alpha}(\frac{1}{p}-\frac{1}{p_1})}\|S_{\alpha}^{\beta}(t)u\|_{p_1}) \le C\|u\|_p.
\end{gather}
This motivates us to define the following norm for $u\in C([0, T]; L^p(\mathbb{R}^n))$:
\begin{gather}\label{eq:norm1}
\|u\|_{p,p_1;T}:=\sup_{0\le t\le T}(\|u\|_p+t^{\frac{n\beta}{\alpha}(\frac{1}{p}-\frac{1}{p_1})}\|u\|_{p_1}) \le C\|u\|_p.
\end{gather}
For a given initial data in $\rho_0\in L^p(\mathbb{R}^n)$, we may use the modified norm $\|\cdot\|_{p,p_1}$ above for the function space. This is beneficial because we do not have to use $\|v\|_p$ to control $B(\rho)$ when applying the Hardy-Littlewood-Sobolev
inequality. Instead, we can use $\|\cdot\|_{p_1}$ norm with some time factor to control $B(\rho)$. Indeed, by some preliminary calculation, we find
\begin{gather*}
\begin{split}
\|H(u, v)\|_p &\le \int_0^t \|\nabla T_{\alpha}^{\beta}(t-s) u(s)B(v)(s)\|_p ds \\
&\le C\int_0^t(t-s)^{-\frac{n\beta}{\alpha}(\frac{1}{q}-\frac{1}{p})
-\frac{\beta}{\alpha}+\beta-1}\|u(s)B(v)(s)\|_q\, ds,\\
&\le C\int_0^t(t-s)^{-\frac{n\beta}{\alpha}(\frac{1}{p_1}-\frac{\gamma-1}{n})
-\frac{\beta}{\alpha}+\beta-1}\|u(s)\|_p\|v\|_{p_1}\, ds\\
&\le C\|u\|_{p,p_1}\|v\|_{p,p_1}t^{\beta(\frac{-n}{\alpha p}-\frac{2}{\alpha}+ \frac{\gamma}{\alpha}+1)}.
\end{split}
\end{gather*}
In the above estimate, $q$ and $p_1$ are related by $\frac{1}{q}=\frac{1}{p}+\frac{1}{p_1}-\frac{\gamma-1}{n}$.
According to this formula, we find that for $p>p_c$, this viewpoint provides nothing new compared with Theorem \ref{thm:localexis}.
However, we are  allowed to choose $p=p_c$ while keeping $p_1>p_c$.  This observation yields the global existence and uniqueness of the mild solution to \eqref{generalized Keller-Segel} with small data in $L^{p_c}(\mathbb{R}^n)$:
\begin{thm}\label{thm:globalexis}
Suppose $n\ge 2$, $0<\beta<1$, $1<\alpha\le 2$ and $1<\gamma\leq n$. Let $\nu=\infty$ if $2(\alpha+\gamma-2)\beta-\alpha\le 0$ or $\nu=\frac{2n\beta}{2(\alpha+\gamma-2)\beta-\alpha}$ if $2(\alpha+\gamma-2)\beta-\alpha> 0$.
Then, whenever $(p_c,\nu)\cap [\frac{n}{n-\alpha+1}, \frac{n}{\gamma-1})$ is nonempty, for any $p\in (p_c,\nu)\cap [\frac{n}{n-\alpha+1}, \frac{n}{\gamma-1})$, there exists $\delta>0$ such that for all $\rho_0\in L^{p_c}(\mathbb{R}^n)$ with
\[
\|\rho_0\|_{p_c}\le \delta,
\]
the equation \eqref{generalized Keller-Segel} admits a mild solution $\rho\in C([0, \infty); L^{p_c}(\mathbb{R}^n))$ with initial value $\rho_0$ in the sense of Definition \ref{def:mild}, satisfying
\begin{gather}
\|\rho(t)\|_{p_c}\le 2\delta, \forall t>0,
\end{gather}
and $\rho\in C((0,\infty), L^p(\mathbb{R}^n))$.
Further, the solution is unique in
\begin{gather}
X_T:=\Big\{\rho\in C([0, T]; L^{p_c}(\mathbb{R}^n))\cap C((0,T], L^p(\mathbb{R}^n)): \|\rho\|_{p_c,p; T}<\infty \Big\},~T\in (0, \infty).
\end{gather}
\end{thm}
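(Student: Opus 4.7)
The plan is to apply the bilinear fixed-point lemma (Lemma \ref{lmm:basicexis}) directly in the scaling-invariant Banach space $X_T$ with the norm $\|\cdot\|_{p_c,p;T}$ from \eqref{eq:norm1}. Writing the mild formulation \eqref{eq:mildreformulation} as $\rho=v+H(\rho,\rho)$ with $v(t)=S_\alpha^\beta(t)\rho_0$ and $H$ as in \eqref{eq:BilinearH}, the two ingredients needed are (a) $\|v\|_{p_c,p;T}\le C\|\rho_0\|_{p_c}$ uniformly in $T$, and (b) $\|H(u,w)\|_{p_c,p;T}\le \eta\,\|u\|_{p_c,p;T}\|w\|_{p_c,p;T}$ with $\eta$ independent of $T$. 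Item (a) is immediate from Proposition \ref{pro:Lpofevoloperator}(2), whose time factor $t^{-\frac{n\beta}{\alpha}(\frac{1}{p_c}-\frac{1}{p})}$ matches precisely the weight built into the norm. Once (a) and (b) are in hand, choosing $\delta$ so small that $4\eta C\delta<1$ and applying Lemma \ref{lmm:basicexis} on every $X_T$ produces a single function $\rho$ with $\|\rho\|_{p_c,p;T}\le 2C\delta$ for all $T>0$, hence $\|\rho(t)\|_{p_c}\le 2C\delta$ globally.

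The substantive step is the bilinear estimate (b). To bound $\|H(u,w)(t)\|_{p_c}$ and $t^{\frac{n\beta}{\alpha}(\frac{1}{p_c}-\frac{1}{p})}\|H(u,w)(t)\|_p$ I would insert Proposition \ref{pro:Lpofevoloperator}(5) for the $L^q\!\to\! L^{p_c}$ (resp.\ $L^q\!\to\! L^{p}$) mapping of $\nabla T_\alpha^\beta(t-s)$, then use H\"older with $\frac{1}{q}=\frac{1}{p}+\frac{1}{r}$ and Hardy--Littlewood--Sobolev (Lemma \ref{lmm:hls}) with $\frac{1}{r}=\frac{1}{p}-\frac{\gamma-1}{n}$ to obtain $\|u(s)B(w)(s)\|_q\le C\|u(s)\|_p\|w(s)\|_p$. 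Invoking $\|u(s)\|_p\le s^{-\frac{n\beta}{\alpha}(\frac{1}{p_c}-\frac{1}{p})}\|u\|_{p_c,p;T}$ and similarly for $w$, the resulting integral is $\int_0^t (t-s)^{A}\,s^{-2\frac{n\beta}{\alpha}(\frac{1}{p_c}-\frac{1}{p})}ds$. A direct arithmetic check, using $\frac{1}{p_c}=\frac{\alpha+\gamma-2}{n}$, shows that $A+1-2\frac{n\beta}{\alpha}(\frac{1}{p_c}-\frac{1}{p})$ equals $0$ for the $L^{p_c}$ bound and $-\frac{n\beta}{\alpha}(\frac{1}{p_c}-\frac{1}{p})$ for the weighted $L^p$ bound, so under the substitution $s=t\sigma$ the integral reduces to a pure Beta-function constant independent of $T$.

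The main technical obstacle is ensuring convergence of that Beta integral at its two endpoints; this is precisely what pins down the admissible range of $p$. Convergence at $\sigma=1$ forces $A>-1$, which reduces to $p>p_c$ (the gain of diffusion over aggregation). Convergence at $\sigma=0$ requires $2\frac{n\beta}{\alpha}(\frac{1}{p_c}-\frac{1}{p})<1$, which is automatic when $2(\alpha+\gamma-2)\beta\le\alpha$ and otherwise forces $p<\nu=\frac{2n\beta}{2(\alpha+\gamma-2)\beta-\alpha}$. The range $[\frac{n}{n-\alpha+1},\frac{n}{\gamma-1})$ is then exactly what makes the $L^q\!\to\! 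L^{p_c}$ mapping estimate of Proposition \ref{pro:Lpofevoloperator}(5) admissible (requiring $q\ge 1$ and $p_c<\theta_4$) while keeping $p<\frac{n}{\gamma-1}$ so that HLS applies.

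The remaining verifications are patterned after Theorem \ref{thm:localexis}. Continuity $\rho\in C([0,\infty);L^{p_c})$ at $t=0$ follows from Proposition \ref{pro:timecontinuity}(iii) for the linear part plus the fact that $\|H(\rho,\rho)(t)\|_{p_c}\to 0$ as $t\to 0$ (the Beta bound is carried out over a vanishing interval); membership in $C((0,\infty);L^p)$ is obtained by the three-term splitting used in Theorem \ref{thm:localexis} with uniform continuity of $\nabla Y(\cdot,t)$ in $L^r$ away from $t=0$. For uniqueness in $X_T$, any candidate $\tilde\rho\in X_T$ satisfies the quadratic self-bound $\|\tilde\rho\|_{p_c,p;t}\le C\|\rho_0\|_{p_c}+\eta\|\tilde\rho\|_{p_c,p;t}^2$; a continuity-in-$t$ bootstrap (starting from $\|\tilde\rho\|_{p_c,p;0}=\|\rho_0\|_{p_c}\le\delta$ and using monotonicity of the norm in $T$) traps $\|\tilde\rho\|_{p_c,p;T}\le 2C\delta$, placing $\tilde\rho$ inside the ball where Lemma \ref{lmm:basicexis} already guarantees uniqueness.
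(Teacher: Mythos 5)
Your overall architecture is the same as the paper's: the space $X_T$ with the norm $\|\cdot\|_{p_c,p;T}$, the bilinear fixed-point Lemma \ref{lmm:basicexis} with $T$-independent constants, and the scaling arithmetic that makes the Beta integral a pure constant (your exponent computations $A+1-2\kappa=0$ and $=-\kappa$ do check out). However, there is a genuine gap in your bilinear estimate for the $L^{p_c}$ component. You bound $\|u(s)B(w)(s)\|_q\le C\|u(s)\|_p\|w(s)\|_p$ with $\frac{1}{q}=\frac{2}{p}-\frac{\gamma-1}{n}$ and then feed this into the $L^q\to L^{p_c}$ smoothing estimate for $\nabla T_\alpha^\beta(t-s)$. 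But that estimate (Proposition \ref{pro:Lpofevoloperator}(5), i.e.\ Young's inequality) only maps $L^q\to L^r$ for $r\ge q$; with your choice of $q$ one has $q\le p_c$ iff $p\le \frac{2n}{\alpha+2\gamma-3}$, and this threshold is strictly below $\frac{n}{\gamma-1}$ (indeed $\frac{n}{\gamma-1}\le\frac{2n}{\alpha+2\gamma-3}$ would force $\alpha\le1$). Concretely, for $n=3$, $\alpha=\gamma=2$, $\beta$ near $1$, the theorem admits $p=2.5$, for which $p_c=1.5$ and $\frac{1}{q}=\frac{2}{2.5}-\frac{1}{3}$ gives $q\approx 2.14>p_c$; convolution cannot lower the integrability index, so $\|\nabla T_\alpha^\beta(\tau)g\|_{p_c}\lesssim \|g\|_q$ is simply unavailable there. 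The paper avoids this by an \emph{asymmetric} splitting in the $L^{p_c}$ bound: $\|u(s)B(w(s))\|_q\le \|u(s)\|_{p_c}\|B(w(s))\|_{r}$ with $\frac{1}{q}=\frac{1}{p_c}+\frac{1}{p}-\frac{\gamma-1}{n}$, so that $q<p_c$ automatically from $p<\frac{n}{\gamma-1}$, and $q\ge 1$ is exactly the hypothesis $p\ge\frac{n}{n-\alpha+1}$ (note your claim that this range comes from $q\ge1$ for \emph{your} $q$ is inconsistent: your $q\ge1$ would instead require $p\ge\frac{2n}{n+\gamma-1}$). With the asymmetric splitting only one singular factor $s^{-\kappa}$ appears in the $L^{p_c}$ integral, and the constraint $2\kappa<1$ (i.e.\ $p<\nu$) is still imposed, but only by the weighted $L^p$ component, exactly as in the paper.

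A secondary, smaller point: your uniqueness bootstrap starts from ``$\|\tilde\rho\|_{p_c,p;0}=\|\rho_0\|_{p_c}\le\delta$,'' but for a general element of $X_T$ the quantity $\limsup_{s\to0^+}s^{\kappa}\|\tilde\rho(s)\|_p$ need not vanish, so the continuity-in-$T$ argument that selects the small root of the quadratic inequality is not justified as stated; one should either restrict to candidates with vanishing weighted norm at $t=0$ or run a Volterra/Gronwall comparison on the difference of two solutions, as the paper does (following the uniqueness argument of Theorem \ref{thm:localexis}).
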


\begin{proof}
Fix $T\in (0, \infty]$. Consider the space $X:=X_T$ with the norm $\|\cdot\|_X:=\|\cdot\|_{p_c, p; T}$. This space is then a Banach space.

By proposition \ref{pro:timecontinuity}, $S_{\alpha}^{\beta}(t)\rho_0\in C([0, T],L^{p_c}(\mathbb{R}^n))$ for any $T>0$. Since $P\in C((0,\infty),L^r(\mathbb{R}^n))$ for $r\in [1, \kappa_1)$, we then have $S_{\alpha}^{\beta}(t)\rho_0\in C((0, T],L^{p}(\mathbb{R}^n))$.
 By \eqref{eq:norm1}, we find
\[
\|S_{\alpha}^{\beta}(t)\rho_0\|_X\le C\|\rho_0\|_{p_c}\le C\delta.
\]
Hence $S_{\alpha}^{\beta}\rho_0\in X$.

For $H(u,v)$, we have:
\begin{gather}\label{eq:Hestimatespc}
\begin{split}
\|H(u, v)\|_{p_c}&\le C\int_0^t(t-s)^{-\frac{n\beta}{\alpha}(\frac{1}{q}-\frac{1}{p_c})
-\frac{\beta}{\alpha}+\beta-1}\|u(s)B(v(s))\|_q\, ds,\\
&\le C\int_0^t(t-s)^{-\frac{n\beta}{\alpha}(\frac{1}{p}-\frac{\gamma-1}{n})
-\frac{\beta}{\alpha}+\beta-1}\|u(s)\|_{p_c}\|v\|_{p}\, ds\\
&\le C\|u\|_{X}\|v\|_{X}\int_0^t(t-s)^{-\frac{n\beta}{\alpha}(\frac{1}{p}-\frac{\gamma-1}{n})
-\frac{\beta}{\alpha}+\beta-1} s^{\frac{n\beta}{\alpha}(\frac{1}{p}-\frac{1}{p_c})}\, ds\\
&\le C\|u\|_{X}\|v\|_{X},
\end{split}
\end{gather}
where H\"older's inequality and Lemma \ref{lmm:hls} imply that  $\frac{1}{q}=\frac{1}{p_c}+\frac{1}{p}-\frac{\gamma-1}{n}$.
In these inequalities, we need  $\theta_4>p_c> q\ge 1$ and $p>p_c$, and $\frac{n\beta}{\alpha}(\frac{1}{p}-\frac{1}{p_c})>-1$. Note that $p_c>q\ge 1$ ensures the H\"older inequality, the Hardy-Littlewood-Sobolev inequality (Lemma \ref{lmm:hls}) to be applied:
\[
\frac{n}{n-\alpha+1}\le p< \frac{n}{\gamma-1}.
\]
Also $p>p_c$ and $\frac{n\beta}{\alpha}(\frac{1}{p}-\frac{1}{p_c})>-1$ ensure the integrals with respect to $s$ to converge. Note that $\max(\frac{n}{n-\alpha+1}, p_c)\ge \max(\frac{2n}{n+\gamma-1},p_c)$ as it should be.
Further, $\theta_4=\infty$ if $p\ge \frac{n}{\alpha}$. If $p<\frac{n}{\alpha}$,
\[
\theta_4=\frac{n}{n(1/p_c+1/p-(\gamma-1)/n)+1-2\alpha},
\]
and $1\le q<p_c<\theta_4$ is automatically true.

The other part can be estimated in the same way.
\begin{gather}\label{eq:Hestimatespc2}
\begin{split}
t^{\frac{n\beta}{\alpha}(\frac{1}{p_c}-\frac{1}{p})}\|H(u, v)\|_{p}&\le Ct^{\frac{n\beta}{\alpha}(\frac{1}{p_c}-\frac{1}{p})}\int_0^t(t-s)^{-\frac{n\beta}{\alpha}(\frac{1}{q}-\frac{1}{p})
-\frac{\beta}{\alpha}+\beta-1}\|u(s)B(v(s))\|_q\, ds,\\
&\le Ct^{\frac{n\beta}{\alpha}(\frac{1}{p_c}-\frac{1}{p})}\int_0^t
(t-s)^{-\frac{n\beta}{\alpha}(\frac{1}{p}-\frac{\gamma-1}{n})
-\frac{\beta}{\alpha}+\beta-1}\|u(s)\|_p\|v(s)\|_p\,ds,
\end{split}
\end{gather}
where $\frac{1}{q}=\frac{2}{p}-\frac{\gamma-1}{n}$. In order to make sure the above inequalities to be held, we need $p$ to satisfy the conditions in Theorem \ref{thm:localexis}. However, as we know $\max(\frac{n}{n-\alpha+1}, p_c)\ge \max(\frac{2n}{n+\gamma-1},p_c)$, these conditions are satisfied automatically.

We then find that
\begin{eqnarray*}
\begin{aligned}
&t^{\frac{n\beta}{\alpha}(\frac{1}{p_c}-\frac{1}{p})}\|H(u, v)\|_{p}\\
&\le C\|u(s)\|_X\|v(s)\|_X t^{\frac{n\beta}{\alpha}(\frac{1}{p_c}-\frac{1}{p})}\int_0^t
(t-s)^{-\frac{n\beta}{\alpha}(\frac{1}{p}-\frac{\gamma-1}{n})
-\frac{\beta}{\alpha}+\beta-1}s^{2\frac{n\beta}{\alpha}(\frac{1}{p}-\frac{1}{p_c})}\,ds,\\
&=C\|u(s)\|_X\|v(s)\|_X.
\end{aligned}
\end{eqnarray*}
For this integral to converge, we need $2\frac{n\beta}{\alpha}(\frac{1}{p}-\frac{1}{p_c})>-1$, or $p<\nu$.

The claim that $H(u, v)\in C([0,T],L^{p_c}(\mathbb{R}^n))\cap C((0,T],L^p(\mathbb{R}^n))$ can be proved with the same argument as the proof of Theorem \ref{thm:localexis}. The only difference is the estimates of \eqref{eq:Hestimatespc} and \eqref{eq:Hestimatespc2}.
We omit the details and then $H(u,v)\in X$.

Applying Lemma \ref{lmm:basicexis}, the existence part then follows. The uniqueness can be verified in the same way as what we did in the proof of Theorem \ref{thm:localexis}.
\end{proof}

\begin{rem}
For the standard Keller-Segel equation ($\alpha=\gamma=2$ and $\beta\to 1$),  $(p_c,\nu)\cap [\frac{n}{n-\alpha+1}, \frac{n}{\gamma-1})$ is nonempty.
\end{rem}

\begin{rem}
 For $n\ge 2$, $\gamma\le n$ and thus $p_c\ge 1$,  \eqref{generalized Keller-Segel} is critical or super-critical. According to the blowup results in Section \ref{sec:blowup}, the smallness of initial data is necessary for global existence. As for usual Keller-Segel model, the global existence for all initial data holds for sub-critical case (see \cite[Section 3.2]{bellomo2015} when $n=1$). We believe \eqref{generalized Keller-Segel} (with $B(\rho)=\nabla((-\Delta)^{-\frac{\gamma}{2}}\rho)$ allowing $\gamma>n$) also has global solutions in the sub-critical cases without the assumption of smallness.
\end{rem}

\subsection{Integrability and integral preservation}

In this subsection, we explore the integrability of the mild solutions if $\rho_0\in L^1(\mathbb{R}^n)\cap L^p(\mathbb{R}^n)$. By interpolation, we know that $\rho_0\in L^{r}(\mathbb{R}^n)$ for all $r\in [1, p]$. Corresponding to Theorem \ref{thm:localexis}, we have the following theorem:
\begin{thm}\label{thm:LpL1}
Suppose $n\ge 2$, $0<\beta<1$, $1<\alpha\le 2$ and $1<\gamma\leq n$. Suppose $\rho_0\in L^1(\mathbb{R}^n)\cap L^p(\mathbb{R}^n)$ where $p\in (p_c,\infty)\cap [\frac{2n}{n+\gamma-1}, \infty)$. Then

\begin{enumerate}[(i)]

\item There exists $T>0$ such that equation \eqref{generalized Keller-Segel} admits a unique mild solution in $C([0, T]; L^1(\mathbb{R}^n))\cap C([0, T]; L^p(\mathbb{R}^n))$ with initial value $\rho_0$ in the sense of Definition \ref{def:mild}. Further, the integral is preserved, that is,
\begin{gather}
\int_{\mathbb{R}^n}\rho(x, t)\,dx=\int_{\mathbb{R}^n}\rho_0(x)\,dx.
\end{gather}

\item Define the largest time of existence
\[
T_b=\sup\{T>0: \text{\eqref{generalized Keller-Segel} has a unique mild solution in }C([0, T]; L^1(\mathbb{R}^n))\cap C([0, T]; L^p(\mathbb{R}^n)) \}.
\]
If $T_b<\infty$, we then have
\[
\limsup_{t\to T_b^-}(\|\rho(\cdot, t)\|_{1}+\|\rho(\cdot, t)\|_{p})=+\infty.
\]

\item If $p$ falls into the range in Theorem \ref{thm:localexis}, then the largest existence times for the mild solution in Theorem \ref{thm:localexis} and the mild solution here are the same.
\end{enumerate}
\end{thm}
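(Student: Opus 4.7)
The plan is to apply the bilinear fixed-point lemma (Lemma \ref{lmm:basicexis}) on the Banach space
$X_T = C([0,T]; L^1(\mathbb{R}^n)) \cap C([0,T]; L^p(\mathbb{R}^n))$ equipped with the norm $\|\rho\|_{X_T} := \sup_{0\le t \le T}(\|\rho(t)\|_1 + \|\rho(t)\|_p)$, together with the bilinear form $H(u,v)$ from \eqref{eq:BilinearH}. The linear term $S_\alpha^\beta(t)\rho_0$ sits in $X_T$ with norm bounded by $\|\rho_0\|_1 + \|\rho_0\|_p$, by the $L^q$ contraction property of $S_\alpha^\beta$ (Proposition \ref{pro:Lpofevoloperator}(1)--(2)) and its time continuity (Proposition \ref{pro:timecontinuity}). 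The $L^p$ bound on $H(u,v)$ follows almost verbatim from the proof of Theorem \ref{thm:localexis}, except that the additional $L^1$ information lets us interpolate $v$ between $L^1$ and $L^p$ to reach any $L^c$ with $c\in[1,p]$; the HLS bound on $B(v)$ can therefore be invoked at an exponent $c<n/(\gamma-1)$ even when $p\ge n/(\gamma-1)$, which is what widens the admissible range of $p$ beyond the range of Theorem \ref{thm:localexis}. For the $L^1$ bound, Young's inequality and Proposition \ref{pro:fundY}(2) give
\[
\|H(u,v)(t)\|_1 \le \int_0^t \|\nabla Y(\cdot,t-s)\|_1 \|u(s)B(v(s))\|_1\,ds \le C\int_0^t (t-s)^{\beta-\beta/\alpha-1} \|u(s)\|_a\|v(s)\|_c\,ds,
\]
with exponents $a,c\in[1,p]$ chosen so that Hölder and HLS apply (namely $1/a+1/a'=1$ and $1/c-(\gamma-1)/n=1/a'$). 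The hypothesis $p\ge 2n/(n+\gamma-1)$ is exactly what guarantees such a choice, and $p>p_c$ makes the time exponent integrable after the interpolation $\|u\|_a\le\|u\|_1^{\theta}\|u\|_p^{1-\theta}$ (analogously for $v$).

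Combining the two estimates yields $\|H(u,v)\|_{X_T} \le C T^\sigma \|u\|_{X_T}\|v\|_{X_T}$ for some $\sigma>0$; the time continuity of $H(u,v)$ with values in $L^1\cap L^p$ is verified by splitting the convolution integral into pieces near and away from $s=t$ and invoking the uniform continuity of $\nabla Y$ on compact intervals (Proposition \ref{pro:timecontinuity}(ii)), exactly as at the end of Theorem \ref{thm:localexis}. Choosing $T$ small enough, Lemma \ref{lmm:basicexis} produces a local mild solution; uniqueness in $X_T$ follows by inserting the difference $\rho_1-\rho_2$ into the same bilinear estimates and applying the fractional Grönwall comparison (\cite{fllx17}, as in Theorem \ref{thm:localexis}). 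For integral preservation, Fubini reduces
\[
\int_{\mathbb{R}^n}\rho(x,t)\,dx = \int_{\mathbb{R}^n} \rho_0\,dx - \int_0^t\int_{\mathbb{R}^n} \nabla_x\cdot\bigl(Y(\cdot,t-s)*\rho(s)B(\rho)(s)\bigr)(x)\,dx\,ds
\]
to the first term via $\int P(\cdot,t)=1$ (Lemma \ref{lmm:PQ}); the second term vanishes because $\rho B(\rho)\in L^1(\mathbb{R}^n)$ and $Y(\cdot,t-s)\in W^{1,1}(\mathbb{R}^n)$, so the convolution lies in $W^{1,1}(\mathbb{R}^n)$ and the divergence theorem forces zero spatial integral. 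The blowup criterion in (ii) then follows from the standard continuation argument at the end of Theorem \ref{thm:localexis}.

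For part (iii), any solution in $X_T$ is in particular a $C([0,T];L^p)$ mild solution, so the uniqueness in Theorem \ref{thm:localexis} yields coincidence on the common interval; hence the present blowup time is bounded above by the one in Theorem \ref{thm:localexis}. For the reverse inequality, given a Theorem \ref{thm:localexis} solution on $[0,T]$ with $T$ below its blowup time and $p<n/(\gamma-1)$, the $L^1$ norm of $\rho(t)$ is controlled by the integral equation because $S_\alpha^\beta\rho_0$ is $L^1$-bounded and $\|H(\rho,\rho)(t)\|_1$ is dominated solely by $\sup_{s}\|\rho(s)\|_p^2$ (the regime $p<n/(\gamma-1)$ allows the choice $a=c=p$ in the $L^1$ estimate above, so no interpolation of $\|\rho\|_1$ on the right-hand side is needed and the bound closes by itself). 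Thus $\rho\in C([0,T];L^1)$ for every such $T$, proving the reverse inequality and (iii). The main obstacle throughout is the exponent bookkeeping for the $L^1$ estimate: one has to simultaneously enforce the HLS window ($c<n/(\gamma-1)$), interpolability ($a,c\in[1,p]$), integrability of the time singularity $(t-s)^{\beta-\beta/\alpha-1}$, and positivity of the resulting time exponent $\sigma$, and the hypothesis $p\in(p_c,\infty)\cap[2n/(n+\gamma-1),\infty)$ is precisely what makes all four constraints compatible.
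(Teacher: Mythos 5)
Your treatment of parts (i) and (ii) follows the paper's route essentially step for step: same space $C([0,T];L^1)\cap C([0,T];L^p)$ with the sup-of-sums norm, same bilinear fixed-point lemma, the $L^1$ bound via $\|\nabla Y(\cdot,t)\|_1\le Ct^{\beta-\beta/\alpha-1}$ plus H\"older/HLS with intermediate exponents in $[1,p]$, and the same continuation and approximation-by-$C_c^\infty$ (equivalently $W^{1,1}$) argument for integral preservation. One small misstatement there: in the $L^1$ estimate the time exponent $\beta-\beta/\alpha-1$ is integrable for every $\alpha>1$ independently of $p$; the hypothesis $p>p_c$ is needed only for the $L^p$ estimate, not "after the interpolation" in the $L^1$ one. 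This does not affect the argument.

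Part (iii), however, has a genuine gap. You claim that for $p<n/(\gamma-1)$ one may take $a=c=p$ in $\|\rho B(\rho)\|_1\le\|\rho\|_a\|B(\rho)\|_{a'}$ with $\|B(\rho)\|_{a'}\le C\|\rho\|_c$, so that $\|H(\rho,\rho)\|_1$ is controlled by $\sup_s\|\rho(s)\|_p^2$ alone. But the exponents force $1/c=1-1/p+(\gamma-1)/n$, and $c=p$ holds only at the single endpoint $p=2n/(n+\gamma-1)$. For every $p$ strictly above that endpoint one gets $c\in(1,p)$, and on $\mathbb{R}^n$ there is no embedding $L^p\subset L^c$ for $c<p$; the only way to reach $\|\rho\|_c$ is the interpolation $\|\rho\|_c\le\|\rho\|_1^{\sigma}\|\rho\|_p^{1-\sigma}$, which re-introduces the unknown $\|\rho\|_1$ on the right-hand side. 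So the bound does not close by itself, and the circularity is exactly the difficulty the paper has to work around. Its resolution is a Picard-type iteration $\rho^{n+1}=S_\alpha^\beta\rho_0+H(\rho,\rho^n)$ starting from $\rho^0=S_\alpha^\beta\rho_0$, for which the interpolation yields $\|\rho^{n+1}\|_1\le\|\rho_0\|_1+C\int_0^t(t-s)^{\beta-\beta/\alpha-1}\|\rho^n(s)\|_1^{\sigma}\,ds$ with $\sigma\in[0,1)$; one then compares with the globally existing solution of the fractional ODE $_0^cD_t^{\beta-\beta/\alpha}u=Cu^{\sigma}$ to get a uniform-in-$n$ bound, and passes to the limit with Fatou's lemma to conclude $\rho(t)\in L^1$. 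Your sketch omits this entire mechanism, and without it the reverse inequality between the two blowup times is not established.
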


\begin{proof}
In this proof, we consider the space
\[
X=C([0, T]; L^1(\mathbb{R}^n))\cap C([0, T]; L^p(\mathbb{R}^n)),
\]
with the norm given by
\[
\|u\|_X:=\sup_{0\le t\le T}(\|u\|_1+\|u\|_p).
\]
Then, $X$ is a Banach space.

It is clear that for any $r\in [1, p]$, $\|u\|_{C([0, T]; L^r(\mathbb{R}^n))}\le \|u\|_X$.

(i). By Proposition \ref{pro:timecontinuity}, $S_{\alpha}^{\beta}\rho_0\in X$. Thus, by Proposition \ref{pro:Lpofevoloperator}, we find that
\[
\|S_{\alpha}^{\beta}(t)\rho_0\|_X\le \|\rho_0\|_1+\|\rho_0\|_p.
\]

We then evaluate that for any $0\le t\le T$:
\begin{gather}\label{eq:L1LpH}
\begin{split}
\|H(u, v)\|_{1} &\le C\int_0^t(t-s)^{-\frac{\beta}{\alpha}+\beta-1}\|u(s)B(v)(s)\|_1\, ds\\
&\le C\int_0^t(t-s)^{-\frac{\beta}{\alpha}+\beta-1}\|u(s)\|_{p_1}\|B(v)(s)\|_{\frac{p_1}{p_1-1}}\, ds\\
&\le C\int_0^t(t-s)^{-\frac{\beta}{\alpha}+\beta-1}\|u(s)\|_{p_1}\|v\|_{p_2}\, ds\\
&\le C\|u\|_X\|v\|_XT^{-\frac{\beta}{\alpha}+\beta},
\end{split}
\end{gather}
where $1-\frac{1}{p_1}=\frac{1}{p_2}-\frac{\gamma-1}{n}$. Clearly, as long as $1+\frac{\gamma-1}{n}\in [\frac{2}{p}, 2)\cap (\frac{1}{p}+\frac{\gamma-1}{n}, 2)$, there always exist $p_1\in (1, p]$, $p_2\in (1, p]\cap (1, \frac{n}{\gamma-1})$ such that the H\"older and the Hardy-Littlewood-Sobolev  inequalities hold.
This condition is satisfied if
\[
p\ge \frac{2n}{n+\gamma-1}.
\]
The other constraint $\theta_4>1$ in Proposition \ref{pro:Lpofevoloperator} is automatically satisfied.

Similarly, we can estimate for $0\le t\le T$:
\begin{gather}\label{eq:L1LpH2}
\begin{split}
\|H(u, v)\|_{p} &\le C\int_0^t(t-s)^{-\frac{n\beta}{\alpha}(\frac{1}{q}-\frac{1}{p})
-\frac{\beta}{\alpha}+\beta-1}\|u(s)B(v(s))\|_q\, ds,\\
&\le C\int_0^t(t-s)^{-\frac{n\beta}{\alpha}(\frac{1}{q}-\frac{1}{p})
-\frac{\beta}{\alpha}+\beta-1}\|u(s)\|_{p_3}\|B(v(s))\|_{\frac{p_3q}{p_3-q}}\, ds\\
& \le C\int_0^t(t-s)^{-\frac{n\beta}{\alpha}(\frac{1}{q}-\frac{1}{p})
-\frac{\beta}{\alpha}+\beta-1}\|u(s)\|_{p_3}\|v\|_{p_4}\, ds.
\end{split}
\end{gather}
Here, we require $\theta_4>p\ge q\ge 1$, and $\frac{1}{q}-\frac{1}{p_3}=\frac{1}{p_4}-\frac{\gamma-1}{n}$.
Clearly, as long as $\frac{1}{q}+\frac{\gamma-1}{n}\in [\frac{2}{p}, 1+\frac{1}{q})\cap (\frac{1}{p}+\frac{\gamma-1}{n}, 1+\frac{1}{q})$, there always exist $p_3\in (q, p], p_4\in (1, \frac{n}{\gamma-1})\cap (1, p]$ to make the H\"older and the Hardy-Littlewood-Sobolev inequalities hold. Hence, we need the following condition
\[
\frac{1}{q}\in [\frac{1}{p}, 1]\cap [\frac{2}{p}-\frac{\gamma-1}{n}, 2-\frac{\gamma-1}{n})\cap (\frac{1}{p}, 2-\frac{\gamma-1}{n})
=(\frac{1}{p},1 ]\cap [\frac{2}{p}-\frac{\gamma-1}{n}, 1].
\]
The interval is nonempty since $p\ge \frac{2n}{n+\gamma-1}$.
To make the integrals converge, we need
\[
\frac{1}{q}<\frac{1}{p}+\frac{\alpha-1}{n}.
\]
Therefore, we need $\frac{2}{p}-\frac{\gamma-1}{n}<\frac{1}{p}+\frac{\alpha-1}{n}$ to ensure $q$ to exist and the inequality is satisfied since $p>p_c$. $p<\theta_4$ requires $\frac{1}{q}<\frac{1}{p}+\frac{2\alpha-1}{n}$ which is guaranteed by $p>p_c$ also.

The claim that $H(u, v)\in C([0,T],L^{1}(\mathbb{R}^n))\cap C([0, T],L^p(\mathbb{R}^n))$ can be proved as in the proof of Theorem \ref{thm:localexis}. The only difference is that we use estimates as in \eqref{eq:L1LpH} and \eqref{eq:L1LpH2}.
We omit the details and then $H(u,v)\in X$.

Then, we find that there exists some $\delta>0$ such that
\[
\|H(u, v)\|_X\le CT^{\delta}\|u\|_X\|v\|_X.
\]
The existence for small time then follows from Lemma \ref{lmm:basicexis}.  The uniqueness argument is performed similarly as we did in Theorem \ref{thm:localexis}.

(ii).  The statement regarding $T_b$ follows in a similar approach as the proof in Theorem \ref{thm:localexis} and we omit the details.

(iii).  Consider that $p$ is in the range as in Theorem \ref{thm:localexis}. To prove that the largest existence times are the same, we only have to show that if $\rho\in C([0, T_1]; L^p(\mathbb{R}^n))$ is a mild solution in Theorem \ref{thm:localexis} for some $T_1>0$, then $\rho\in C([0, T_1]; L^1(\mathbb{R}^n))$. To this end, we consider the sequence $\{\rho^n\}$ defined inductively by $\rho^0=S_{\alpha}^{\beta}\rho_0$ and
\[
\rho^{n+1}=S_{\alpha}^{\beta}\rho_0+H(\rho, \rho^n), ~n\ge 0.
\]
By the same estimates as in Theorem \ref{thm:localexis}, we find that
$\rho^n\in C([0, T], L^p(\mathbb{R}^n))$. Direct computation shows that $\rho^n\to \rho$ in $C([0, T_1]; L^p(\mathbb{R}^n))$ as $n\rightarrow \infty$.
Now, by \eqref{eq:L1LpH}, we find
\[
\|H(\rho, \rho^n)\|_{1}\le C\int_0^t(t-s)^{-\frac{\beta}{\alpha}+\beta-1}\|\rho(s)\|_{p}\|\rho^n(s)\|_{\frac{np}{np+p\gamma-n-p}}\, ds.
\]
Since $\frac{np}{np+p\gamma-n-p}\in (1, p]$, there is $\sigma\in [0, 1)$ such that
\begin{eqnarray*}
\begin{aligned}
\|H(\rho, \rho^n)\|_{1}&\le C\int_0^t(t-s)^{-\frac{\beta}{\alpha}+\beta-1}\|\rho^n(s)\|_1^{\sigma}\|\rho^n(s)\|_p^{1-\sigma}\, ds\\
&\le C_1\int_0^t(t-s)^{-\frac{\beta}{\alpha}+\beta-1}\|\rho^n(s)\|_1^{\sigma}\,ds.
\end{aligned}
\end{eqnarray*}

This gives the inequality
\[
\|\rho^{n+1}\|_1\le \|\rho_0\|_1+C_1(T_1)\int_0^t(t-s)^{-\frac{\beta}{\alpha}+\beta-1}\|\rho^n(s)\|_1^{\sigma}\,ds.
\]
Inductively, it is easy to see that for any positive integer $m$,  $\|\rho^m(t)\|_1\le u(t)$, where $u(t)$ solves the fractional ODE $D_c^{\beta-\frac{\beta}{\alpha}}u=C_1(T_1)u^{\sigma}$,$u(0)=\|\rho_0\|_1$, which exists globally as showed in \cite{fllx17}.
Extracting an almost everywhere convergent subsequence and applying Fatou's lemma to $|\rho^{n_k}|$ and $|\rho|$, we find $\|\rho\|_1\le u(t)$. Using this boundedness and similar estimate to the proof in equation \eqref{eq:L1LpH}, it is straightforward to show $\|\rho(t+\Delta t)-\rho(t)\|_1\to 0$ as $\Delta t \to 0$. Hence, $\rho\in C([0, T_1]; L^1(\mathbb{R}^n))$.

As soon as we have the integrability, we have
\begin{equation}\label{4.1}
\int_{\mathbb{R}^n}\rho\,dx=\int_{\mathbb{R}^n} S_{\alpha}^{\beta}\rho_0\, dx-\int_{\mathbb{R}^n}\int_0^t\nabla\cdot(T_{\alpha}^{\beta}(t-s)\rho(s) B(\rho(s)))ds\,dx.
\end{equation}
Since we know $P\ge 0$ and $\int_{\mathbb{R}^n} P\,dx=1$, we then have
\begin{equation}\label{4.2}
\int_{\mathbb{R}^n} S_{\alpha}^{\beta}\rho_0\, dx=\int_{\mathbb{R}^n} \rho_0\,dx.
\end{equation}
Further, for any $t>0$, we have $\zeta=\rho(s) B(\rho)(s)\in C([0, t]; L^1({\mathbb{R}^n}))$ since $\rho\in X$. By approximating $\zeta$ with $C_c^{\infty}([0, t]\times \mathbb{R}^n)$ functions, we find
\begin{equation}\label{4.3}
\int_{\mathbb{R}^n}\int_0^t\nabla\cdot(T_{\alpha}^{\beta}(t-s)\zeta(s))ds\,dx=0.
\end{equation}
\eqref{4.1}, \eqref{4.2} and \eqref{4.3} prove that $L^1$ integral is preserved.
\end{proof}

\begin{rem}
In Theorem \ref{thm:LpL1}, we do not need to ask for $p<\frac{n}{\gamma-1}$ compared with Theorem \ref{thm:localexis}.  The reason is that we only choose $p_4<\frac{n}{\gamma-1}$, and $p_3$ can be adjusted accordingly, which will not affect $p$.
\end{rem}

For $\rho_0\in L^1(\mathbb{R}^n)\cap L^{p_c}(\mathbb{R}^n)$, we have the following claim:
\begin{thm}\label{thm:L1critical}
Assume the conditions and notations in Theorem \ref{thm:globalexis} hold. Suppose $(p_c,\nu)\cap [\frac{n}{n-\alpha+1}, \frac{n}{\gamma-1})$ is nonempty. Then, there exists $\delta>0$ such that for all $\rho_0\in L^1(\mathbb{R}^n)\cap L^{p_c}(\mathbb{R}^n)$ with $\|\rho_0\|_{p_c}\le\delta$, all the claims for the mild solution $\rho$  in Theorem \ref{thm:globalexis} hold and further $\rho\in C([0,\infty), L^1(\mathbb{R}^n))$ satisfying
\begin{gather}\label{4.9}
\int_{\mathbb{R}^n}\rho(x, t)\,dx=\int_{\mathbb{R}^n}\rho_0(x)\,dx.
\end{gather}
\end{thm}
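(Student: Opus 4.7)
The plan is to adapt the strategy used for Theorem~\ref{thm:LpL1}(iii) to the critical setting, working with the iteration sequence that produces the mild solution in $X_T$ and upgrading it to $L^1$ via interpolation between $L^1$ and $L^{p_c}$.

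First I would fix any $T\in(0,\infty)$ and any admissible $p\in(p_c,\nu)\cap[\tfrac{n}{n-\alpha+1},\tfrac{n}{\gamma-1})$ from Theorem~\ref{thm:globalexis}, and build the Picard iterates
\[
\rho^0:=S_\alpha^\beta(t)\rho_0,\qquad \rho^{k+1}:=S_\alpha^\beta(t)\rho_0+H(\rho,\rho^k),
\]
where $\rho$ is the fixed-point solution in $X_T$. Exactly as in Theorem~\ref{thm:globalexis}, $\rho^k\to\rho$ in $X_T$, and Proposition~\ref{pro:Lpofevoloperator}(2)--(3) together with $\|P(\cdot,t)\|_1=1$ give $\|S_\alpha^\beta(t)\rho_0\|_1\le\|\rho_0\|_1$ and, by Proposition~\ref{pro:timecontinuity}(iii), continuity of $t\mapsto S_\alpha^\beta(t)\rho_0$ in $L^1$.

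The crucial step is to estimate $H(\rho,\rho^k)$ in $L^1$. Using Proposition~\ref{pro:Lpofevoloperator}(5) with $q=1$, then H\"older and Hardy--Littlewood--Sobolev,
\[
\|H(\rho,\rho^k)(t)\|_1\le C\int_0^t(t-s)^{-\frac{\beta}{\alpha}+\beta-1}\|\rho^k(s)\|_{p_1}\|\rho(s)\|_{p}\,ds,
\]
with $\tfrac1{p_1}=1-\tfrac1p+\tfrac{\gamma-1}{n}$. The choice of $p$ forces $p_1\in[1,p_c]$, so interpolation yields $\|\rho^k(s)\|_{p_1}\le\|\rho^k(s)\|_1^{\sigma}\|\rho^k(s)\|_{p_c}^{1-\sigma}$ for some $\sigma\in[0,1)$ computed from $\sigma+\tfrac{1-\sigma}{p_c}=\tfrac1{p_1}$. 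Since $\|\rho^k\|_{p_c,p;T}$ is bounded by $2\delta$ (uniformly in $k$, from the contraction construction), and $\|\rho(s)\|_p\le 2\delta\,s^{-\frac{n\beta}{\alpha}(\frac1{p_c}-\frac1p)}$, setting $U_k(t):=\sup_{0\le\tau\le t}\|\rho^k(\tau)\|_1$ I would obtain a sub-linear fractional integral inequality
\[
U_{k+1}(t)\le \|\rho_0\|_1+C\delta^{2-\sigma}\int_0^t(t-s)^{-\frac{\beta}{\alpha}+\beta-1}s^{-\frac{n\beta}{\alpha}(\frac1{p_c}-\frac1p)}U_k(s)^{\sigma}\,ds.
\]
The exponents coming from the choice $p\in(p_c,\nu)$ are exactly the ones that made the corresponding time integrals converge in the proof of Theorem~\ref{thm:globalexis}, so the integral is well-defined. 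Applying Lemma~\ref{lmm:comparison} (as in the proof of Theorem~\ref{thm:LpL1}(iii)) to the fractional ODE $_0^cD_t^{\beta-\beta/\alpha}u=C\delta^{2-\sigma}u^{\sigma}$, which exists globally by \cite{fllx17}, dominates $U_k$ uniformly in $k$. Fatou's lemma applied to an a.e.\ convergent subsequence then transfers the bound to $\rho$, giving $\rho\in L^\infty((0,T);L^1)$.

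Time continuity $\rho\in C([0,T];L^1)$ follows by a repeat of the same estimate applied to the difference $\rho(t+\Delta t)-\rho(t)$, splitting the time integral near the singularity exactly as in the uniform-continuity argument used for Theorem~\ref{thm:localexis}; continuity at $t=0$ uses Proposition~\ref{pro:timecontinuity}(iii) for the linear term and the positive power of $t$ coming from the $H$-estimate. Since $T$ was arbitrary and all bounds depend only on $\|\rho_0\|_1$, $\delta$, and $T$ through the integral, iterating on $[T,2T],[2T,3T],\dots$ (using the integral bound $\|\rho(t)\|_{p_c}\le 2\delta$ for all $t$) extends the conclusion to $[0,\infty)$. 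Finally, mass conservation \eqref{4.9} is obtained exactly as in Theorem~\ref{thm:LpL1}(i): integrate the mild formulation in $x$, use $\int_{\mathbb{R}^n}P(\cdot,t)\,dx=1$ from Lemma~\ref{lmm:PQ}, and use that $\zeta:=\rho B(\rho)\in C([0,t];L^1)$ (again by H\"older plus Hardy--Littlewood--Sobolev with the same exponents) so that $\int_{\mathbb{R}^n}\nabla\cdot(T_\alpha^\beta(t-s)\zeta(s))\,dx=0$ by approximating $\zeta$ with $C_c^\infty$ test functions and integrating by parts. The main obstacle is verifying that the sub-linear integral inequality genuinely produces a global-in-time uniform bound without blowup of $U_k$; this is where the critical assumption $\|\rho_0\|_{p_c}\le\delta$ and the careful choice of $\sigma<1$ (forced by $p>p_c$) become essential.
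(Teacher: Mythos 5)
Your overall architecture is the same as the paper's: freeze $\rho$ in one slot of the bilinear form, run the Picard iterates $\rho^{k+1}=S_\alpha^\beta\rho_0+H(\rho,\rho^k)$ (which converge to $\rho$ in $X_T$ because $\rho^{k+1}-\rho=H(\rho,\rho^k-\rho)$ and $\|H(\rho,\cdot)\|\le\tfrac12\|\cdot\|$), interpolate the intermediate Lebesgue norm between $L^1$ and $L^{p_c}$ to get a sublinear Volterra inequality for $\|\rho^k\|_1$, dominate by a globally existing solution of a fractional ODE, pass to the limit by Fatou, and prove \eqref{4.9} by smooth approximation of $\rho B(\rho)$. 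That is exactly the paper's route, including the case $\sigma=0$ (your $p_1=p_c$), which corresponds to the paper's separate case $\frac{n}{n-\alpha+1}\ge p_c$ where no interpolation is needed.

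The one step that does not go through as written is the comparison argument. Because you pair $\|\rho(s)\|_p\le 2\delta\, s^{-a}$ (with $a=\frac{n\beta}{\alpha}(\frac1{p_c}-\frac1p)>0$) against $\|B(\rho^k)(s)\|_{p'}\lesssim\|\rho^k(s)\|_{p_1}$, your Volterra inequality carries the singular weight $s^{-a}$ inside the integral,
\begin{equation*}
U_{k+1}(t)\le \|\rho_0\|_1+C\delta^{2-\sigma}\int_0^t(t-s)^{\beta-\frac{\beta}{\alpha}-1}s^{-a}\,U_k(s)^{\sigma}\,ds,
\end{equation*}
and this is \emph{not} the Volterra form of $_0^cD_t^{\beta-\beta/\alpha}u=C\delta^{2-\sigma}u^{\sigma}$, so Lemma \ref{lmm:comparison} cannot be invoked for that ODE. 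The paper avoids this entirely by the opposite H\"older split: $\|\rho B(\rho^k)\|_1\le\|\rho\|_{p_c}\|B(\rho^k)\|_{p_c/(p_c-1)}\le C(2\delta)\|\rho^k\|_{n/(n-\alpha+1)}$, using the \emph{unweighted} bound $\|\rho(s)\|_{p_c}\le 2\delta$, which yields the clean kernel $(t-s)^{\beta-\beta/\alpha-1}$ and a literal match with the comparison ODE. Your version is repairable — e.g., note that $\int_0^t(t-s)^{b-1}s^{-a}\,ds=Ct^{b-a}$ with $b-a=\frac{\beta}{\alpha}\bigl(1+\frac np-\gamma\bigr)>0$ since $p<\frac{n}{\gamma-1}$, and close the sublinear recursion $U_{k+1}(t)\le\|\rho_0\|_1+C\delta^{2-\sigma}t^{b-a}\sup_{s\le t}U_k(s)^{\sigma}$ by hand — but as stated the domination "by the fractional ODE" is a gap. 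Everything else (the $L^1$ time continuity, the extension to $[0,\infty)$, and the mass conservation via $\int_{\mathbb{R}^n}P\,dx=1$ and $\int_{\mathbb{R}^n}\nabla\cdot(T_\alpha^\beta\zeta)\,dx=0$) is sound and matches the paper.
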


\begin{proof}
As in the proof of Theorem \ref{thm:globalexis}, we pick $p\in (p_c,\nu)\cap [\frac{n}{n-\alpha+1}, \frac{n}{\gamma-1})$ and $\delta>0$ so that the mild solution satisfies
\begin{gather}\label{eq:Xnorm1}
\|\rho\|_X=\sup_{t>0} \|\rho\|_{p_c}+t^{\frac{n\beta}{\alpha}(\frac{1}{p_c}-\frac{1}{p})}\|\rho\|_p\le 2\delta,
\end{gather}
and using \eqref{eq:Xnorm1}, it holds that
\begin{gather}\label{eq:smallH}
\begin{split}
 \|H(\rho, v)\|_{p_c} &\le
 C\int_0^t(t-s)^{-\frac{n\beta}{\alpha}(\frac{1}{q}-\frac{1}{p_c})
-\frac{\beta}{\alpha}+\beta-1}\|\rho(s)B(v(s))\|_q\, ds\\
&\le C\int_0^t(t-s)^{-\frac{n\beta}{\alpha}(\frac{1}{p}-\frac{\gamma-1}{n})
-\frac{\beta}{\alpha}+\beta-1}\|\rho(s)\|_{p}\|v\|_{p_c}\, ds\\
& \le C_1\|\rho\|_X\|v\|_{C([0, t]; L^{p_c})}
\le \frac{1}{2}\|v\|_{C([0, t]; L^{p_c})},
\end{split}
\end{gather}
where the same constraint $\frac{1}{q}=\frac{1}{p_c}+\frac{1}{p}-\frac{\gamma-1}{n}$ holds. The only difference is that we applied $L^{p_c}$ norm on $v$ here while we applied $L^{p_c}$ norm on the first function in the proof of Theorem \ref{thm:globalexis}. Note that $\delta$ is chosen such that $C_12\delta\le\frac{1}{2}$ which is the same as what we did in the proof of Theorem \ref{thm:globalexis}.

Clearly,
\begin{equation}\label{4.12}
 \|S_{\alpha}^{\beta}(t)\rho_0\|_1\le \|\rho_0\|_1.
 \end{equation}

If $p_1=\frac{n}{n-\alpha+1}\ge p_c$ or $n\le 2\alpha+\gamma-3$,
\begin{gather}\label{4.13}
\begin{split}
\|H(\rho, \rho)\|_{1} &\le C\int_0^t(t-s)^{-\frac{\beta}{\alpha}+\beta-1}\|\rho(s)\|_{p_c}\|B(\rho(s))\|_{\frac{p_c}{p_c-1}}\, ds,\\
&\le C\int_0^t(t-s)^{-\frac{\beta}{\alpha}+\beta-1}\|\rho(s)\|_{p_c}\|\rho(s)\|_{\frac{n}{n-\alpha+1}}\, ds\\
& \le C \|\rho\|_{X}^2
\int_0^t(t-s)^{-\frac{\beta}{\alpha}+\beta-1} s^{\frac{n\beta}{\alpha}(\frac{n-\alpha+1}{n}-\frac{1}{p_c})}\,ds,
\end{split}
\end{gather}
where the norm $X$ is in \eqref{eq:Xnorm1}. It is clear that
\begin{equation}\label{4.14}
\frac{n\beta}{\alpha}(\frac{n-\alpha+1}{n}-\frac{1}{p_c})=\frac{\beta}{\alpha}(n-2\alpha-\gamma+3)\ge \frac{\beta}{\alpha}(3-2\alpha)>-1.
\end{equation}
The integral in the third inequality of \eqref{4.13} converges. Hence, \eqref{4.12} and \eqref{4.13} imply that $\rho(t)\in L^1(\mathbb{R}^n)$. It is then easy to show that
\[
\|\rho(t+\Delta t)-\rho(t)\|_1\to 0,~\Delta t\to 0,~\forall t\ge 0,
\]
by using similar controls. Approximating $\rho B(\rho)$ with $C_c^{\infty}([0, t]\times\mathbb{R}^n)$ functions, we can prove easily that
\[
\int_{\mathbb{R}^n} H(\rho, \rho)\,dx=0.
\]
This then proves the claims when $p_1\ge p_c$.

Now, we assume $p_1<p_c$ ($p_1>1$ is clearly true). Then, using interpolation, there exists $\sigma\in (0, 1)$ such that
\begin{equation}\label{4.17}
\|u\|_{p_1}\le \|u\|_1^{\sigma}\|u\|_{p_c}^{1-\sigma}.
\end{equation}
We construct the sequence $\rho^n$ as $\rho^0=S_{\alpha}^{\beta}\rho_0$ and
\begin{equation}\label{4.18}
\rho^{n+1}=S_{\alpha}^{\beta}\rho_0+H(\rho, \rho^n), ~n\ge 0
\end{equation}
where $\rho$ is the mild solution. Inductively, with the help of \eqref{4.18}, \eqref{eq:smallH} and \eqref{eq:Xnorm1}, assumption $\|\rho_0\|_{p_c}\leq\delta$ implies that
\[
\|\rho^{n+1}\|_{p_c}\le \delta+\|H(\rho, \rho^n)\|_{p_c}\le
\delta+\frac{1}{2}2\delta=2\delta.
\]
Similarly, $\rho^{n+1}\in C([0, \infty); L^{p_c}(\mathbb{R}^n))$ can be verified directly.

Further, let $u(t)$ be the solution to the following equation
\[
_0^cD_t^{\beta-\frac{\beta}{\alpha}}u=C(2\delta)^{2-\sigma} u^{\sigma},~u(0)=\|\rho_0\|_1,
\]
which exists globally and is increasing on $[0,\infty)$ by the result in \cite{fllx17}. Clearly, $\|\rho^0\|_1\le \|\rho_0\|_1\le u(t)$.  Then, we prove by  induction that for any positive integer $m$, $\|\rho^m(t)\|_1\le u(t)$. Indeed, using \eqref{eq:Xnorm1}, \eqref{4.17} and induction assumption, we have
\begin{gather*}
\begin{split}
\|\rho^{n+1}\|_1 &\le \|\rho_0\|_1+C\int_0^t(t-s)^{-\frac{\beta}{\alpha}+\beta-1}\|\rho(s)\|_{p_c}\|\rho^n(s)\|_{\frac{n}{n-\alpha+1}}\, ds \\
& \le \|\rho_0\|_1+C(2\delta)^{2-\sigma}\int_0^t(t-s)^{-\frac{\beta}{\alpha}+\beta-1} \|\rho^n(s)\|_1^{\sigma}ds \\
 &\le \|\rho_0\|_1+C(2\delta)^{2-\sigma}\int_0^t(t-s)^{-\frac{\beta}{\alpha}+\beta-1}u^{\sigma}(s)ds= u(t).
\end{split}
\end{gather*}
Again by estimating directly $\|\rho^{n+1}(t+\Delta t)-\rho^{n+1}(t)\|_1$, we find that $\rho^{n+1}\in C([0,\infty), L^1(\mathbb{R}^n))$.

Finally, we have
\[
\rho^{n+1}-\rho=H(\rho,\rho^n-\rho)
\]
and therefore
\[
\|\rho^{n+1}-\rho\|_{p_c}\le \frac{1}{2}\|\rho^n-\rho\|_{p_c}
\]
according to \eqref{eq:smallH}. This implies that $\rho^n\to \rho$ in $C([0,\infty), L^{p_c}(\mathbb{R}^n))$. By a diagonal argument, we can take a subsequence $\rho^{n_k}\to \rho$
a.e. on $[0, \infty)\times \mathbb{R}^n$. Applying Fatou's Lemma to the absolute value of $\rho^{n_k}$ and $\rho$, we have
$\rho(t)\in L^1(\mathbb{R}^n)$ and $\|\rho\|_1\le u(t)$ for a.e. $t\in [0, \infty)$.
Using $\|\rho\|_1\le u(t)$, we can then check directly that
\[
\|\rho(t+\Delta t)-\rho(t)\|_1\to 0,~\Delta t\to 0.
\]
Again, we can prove
\[
\int_{\mathbb{R}^n} H(\rho, \rho)\,dx=0
\]
by approximating $\rho$ with smooth functions.
\end{proof}

\subsection{Existence in the weighted space}

In this subsection, we study the existence of the mild solutions to \eqref{generalized Keller-Segel}  in the weighted spaces (see equation \eqref{WLI} for the definition of $L_{\nu}^{\infty}$)

\begin{gather}
X_T=L^{\infty}([0,T],L^\infty_{n+\alpha}(\mathbb{R}^n)).
\end{gather}
For mild solutions in weighted spaces of some PDEs, one may refer to  \cite{brandolese2008,BK}

It is easy to verify that
\begin{equation}\label{w412}
\|u\|_{L_{n+\alpha}^{\infty}}<\infty  \Rightarrow  u\in L^1(\mathbb{R}^n)\cap L^{\infty}(\mathbb{R}^n).
\end{equation}

First, we establish the $L^\infty$ estimate of linear operator $B$ using the $X_T$-norm.
\begin{lem}\label{lmm:inftyB}
Let $1<\gamma\leq n$, $1<\alpha\leq2$, $n\geq2$. Assume $u\in X_T$. $B(u)$ is defined as (1.3). Then
\begin{equation}\label{B1}
\|B(u)\|_\infty\leq C\|u\|_{X_T}.
\end{equation}
\end{lem}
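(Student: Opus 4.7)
The plan is to prove \eqref{B1} by a standard inner/outer splitting of the singular integral defining $B$, combined with the elementary observation already noted in \eqref{w412} that $L^\infty_{n+\alpha}(\mathbb{R}^n) \hookrightarrow L^1(\mathbb{R}^n)\cap L^\infty(\mathbb{R}^n)$. Since $\|u\|_{X_T}$ controls $\|u(\cdot,t)\|_{L^\infty_{n+\alpha}}$ uniformly in $t\in[0,T]$, it suffices to prove the spatial estimate $\|B(v)\|_\infty\le C\|v\|_{L^\infty_{n+\alpha}}$ for a fixed time slice $v=u(\cdot,t)$, and then take the sup over $t$.

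First I would observe from the explicit formula \eqref{eq:B} that
\[
|B(v)(x)| \le s_{n,\gamma}\int_{\mathbb{R}^n}\frac{|v(y)|}{|x-y|^{n-\gamma+1}}\,dy,
\]
and split the domain of integration into $\{|x-y|\le 1\}$ and $\{|x-y|>1\}$. Near the singularity, since $1<\gamma\le n$, the exponent $n-\gamma+1 < n$, so the kernel is locally integrable and the inner piece is bounded by $C\|v\|_\infty$. On the outer region, the kernel is bounded (its exponent $n-\gamma+1\ge 1>0$) so that piece is bounded by $C\|v\|_1$. Combining, $\|B(v)\|_\infty\le C(\|v\|_1+\|v\|_\infty)$.

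Next I would convert the $L^1\cap L^\infty$ bound into the weighted bound. The inequality $\|v\|_\infty\le \|v\|_{L^\infty_{n+\alpha}}$ is immediate because $(1+|x|)^{n+\alpha}\ge 1$. For the $L^1$-bound, using $|v(x)|\le \|v\|_{L^\infty_{n+\alpha}}(1+|x|)^{-(n+\alpha)}$ and the fact that $n+\alpha>n$ makes $(1+|x|)^{-(n+\alpha)}$ integrable over $\mathbb{R}^n$, one gets $\|v\|_1\le C_n\|v\|_{L^\infty_{n+\alpha}}$. Putting everything together yields
\[
\|B(u(\cdot,t))\|_\infty \le C\|u(\cdot,t)\|_{L^\infty_{n+\alpha}}\le C\|u\|_{X_T},
\]
and taking $\sup$ (or $\mathrm{ess\,sup}$) in $t\in[0,T]$ gives \eqref{B1}.

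There is really no main obstacle here; the lemma is an embedding statement plus a routine kernel split. The only conditions to double-check are the two inequalities $1<\gamma$ (to ensure local integrability of the kernel) and $\gamma\le n$ (which, together with $\gamma>1$, makes the kernel decay at infinity integrable against an $L^1$ function). Both are included in the hypotheses, so the argument is clean.
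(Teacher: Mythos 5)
Your proof is correct, and it takes a somewhat different (and cleaner) route than the paper's. The paper keeps the weight $(1+|y|)^{-(n+\alpha)}$ inside the integral and splits $\mathbb{R}^n$ into three regions adapted to the point $x$, namely $\{|y|\le |x|/2\}$, $\{|x-y|\le |x|/2\}$ and the complement, estimating each piece separately with bounds that depend on $|x|$ but are uniformly controlled. You instead factor the lemma into two modular steps: the generic convolution estimate
\[
\Big\|\,|\cdot|^{-(n-\gamma+1)} * v\,\Big\|_\infty \le C\big(\|v\|_1+\|v\|_\infty\big),
\]
obtained from the fixed-radius split $\{|x-y|\le 1\}\cup\{|x-y|>1\}$ (local integrability of the kernel needs exactly $\gamma>1$, boundedness of the tail needs $n-\gamma+1>0$, i.e.\ $\gamma\le n$ plus $n\ge 1$), followed by the embedding $L^\infty_{n+\alpha}\hookrightarrow L^1\cap L^\infty$ that the paper itself records in \eqref{w412}. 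Both arguments are elementary; yours is shorter, avoids the case analysis on $|x|$, and isolates the only two hypotheses actually used, while the paper's direct three-region estimate gives slightly more refined pointwise information (e.g.\ the factors $|x|^{\gamma-1}$ and $|x|^{\gamma-1}(1+|x|)^{-(n+\alpha)}$ appearing in the intermediate bounds), which is not needed for the stated conclusion.
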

\begin{proof}
According to \eqref{eq:B}, we have
\begin{eqnarray*}
\begin{aligned}
|B(u)(x,t)|&=|\int_{\mathbb{R}^n}\frac{-s_{n,\gamma}(x-y)}{|x-y|^{n-\gamma+2}}u(y,t)\,dy|\\
&\le C\|u\|_{X_T}\int_{\mathbb{R}^n}\frac{1}{|x-y|^{n-\gamma+1}(1+|y|^{n+\alpha})}\,dy.
\end{aligned}
\end{eqnarray*}
Denote $\Omega_1=\{y\in\mathbb{R}^n:|y|\leq\frac{|x|}{2}\}$, $\Omega_2=\{y\in\mathbb{R}^n:|x-y|\leq\frac{|x|}{2}\}$ and $\Omega_3=\{y\in\mathbb{R}^n:|x-y|\geq\frac{|x|}{2}, |y|\geq\frac{|x|}{2}\}$.
Then, we have
\begin{gather}
I=\int_{\mathbb{R}^n}\frac{1}{|x-y|^{n-\gamma+1}(1+|y|^{n+\alpha})}\,dy
=\int_{\Omega_1}+\int_{\Omega_2}+\int_{\Omega_3}=:I_1+I_2+I_3.
\end{gather}

If $y\in \Omega_1$, we have $\frac{1}{2}|x|\le |x-y|\le \frac{3}{2}|x|$. This inequality gives that
\[
I_1\le C\frac{1}{|x|^{n-\gamma+1}}\int_{|y|\le |x|/2}\frac{1}{(1+|y|)^{n+\alpha}}\,dy
=C_1\frac{1}{|x|^{n-\gamma+1}}\int_0^{|x|/2}\frac{r^{n-1}}{(1+r)^{n+\alpha}}dr.
\]
When $|x|\le 1$, this is controlled by
\[
C_1\frac{1}{|x|^{n-\gamma+1}}\int_0^{|x|/2}r^{n-1}dr=C_2|x|^{\gamma-1}\le C_2.
\]
When $|x|>1$, we have trivially $I_1\le C_1\int_0^{\infty}\frac{r^{n-1}}{(1+r)^{n+\alpha}}dr<\infty$.

If $y\in\Omega_2$, we have $|y|\ge |x|-|x-y|\ge \frac{1}{2}|x|$.
Then,
\begin{gather}
\begin{split}
|I_2| &\le C\frac{1}{(1+|x|)^{n+\alpha}}\int_{\Omega_2}\frac{1}{|x-y|^{n-\gamma+1}}\,dy \\
&=C_1\frac{1}{(1+|x|)^{n+\alpha}}\int_0^{|x|/2}\frac{r^{n-1}}{r^{n-\gamma+1}}\,dr \\
&=C_2\frac{|x|^{\gamma-1}}{(1+|x|)^{n+\alpha}}\le C_3.
\end{split}
\end{gather}

If $y\in\Omega_3$, one has $\frac{|x|}{2}\leq |x-y|\leq 3|y|$. Then,
\begin{gather*}
|I_3|\le \int_{|x-y|\le 1}\frac{1}{|x-y|^{n-\gamma+1}}\,dy
+\int_{\Omega_3\cap \{|x-y|>1\}}\frac{1}{|x-y|^{n-\gamma+1}(1+|y|^{n+\alpha})}\,dy.
\end{gather*}
The first term is trivially bounded by a constant, and for the second term we have
\[
\begin{split}
\int_{\Omega_3\cap \{|x-y|>1\}}\frac{1}{|x-y|^{n-\gamma+1}(1+|y|^{n+\alpha})}\,dy
 &\le C_1\int_{\Omega_3\cap \{|x-y|>1\}}\frac{1}{|x-y|^{2n+\alpha-\gamma+1}}\,dy\\
 &\le C_2\int_{\max(1, |x|/2)}^{\infty}\frac{r^{n-1}}{r^{2n+\alpha-\gamma+1}}dr\\
 &\le C_3.
\end{split}
\]
All the bounds are independent of $x$ and the claim is therefore proved.
\end{proof}

Using Lemma \ref{lmm:inftyB}, we establish the existence of mild solution to \eqref{generalized Keller-Segel} in $X_T$.
\begin{thm} \label{thm:weighted}
For $n\geq 2$. Let $0<\beta<1, 1<\alpha \le 2$ and $1<\gamma\leq n$. If $\rho_0\in L^\infty_{n+\alpha}(\mathbb{R}^n)$. Then there exists $T>0$ such that \eqref{generalized Keller-Segel} has a unique mild solution $\rho\in L^{\infty}([0,T];L^\infty_{n+\alpha}(\mathbb{R}^n))$, satisfying
\[
\int_{\mathbb{R}^n}\rho\,dx=\int_{\mathbb{R}^n}\rho_0\,dx.
\]
Define the largest time of existence
\[
T_b^{\alpha}=\sup\{T>0: \text{\eqref{generalized Keller-Segel} has a unique mild solution in }X_T \}.
\]
If $T_b^{\alpha}<\infty$, we then have $\limsup_{t\to T_b^-}\|\rho(\cdot, t)\|_{L_{n+\alpha}^{\infty}}=+\infty$.
Further,  this solution is the same solution as in Theorems \ref{thm:LpL1} on $[0, T_b^{\alpha})$, and
\[
\rho\in C([0, T_b^{\alpha}), L^p(\mathbb{R}^n)),~\forall p\in [1,\infty).
\]
\end{thm}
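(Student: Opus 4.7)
\noindent\textbf{Proof proposal for Theorem \ref{thm:weighted}.}

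The plan is to apply the bilinear fixed point lemma (Lemma \ref{lmm:basicexis}) in the Banach space $X_T=L^{\infty}([0,T];L^{\infty}_{n+\alpha}(\mathbb{R}^n))$, with $v=S_{\alpha}^{\beta}(t)\rho_0$ and $H$ the bilinear form defined in \eqref{eq:BilinearH}. By the weighted estimate \eqref{3.21} in Proposition \ref{pro:weightedfund} together with the embedding \eqref{w412} (which gives $\|\rho_0\|_1\le C\|\rho_0\|_{L^{\infty}_{n+\alpha}}$ since $(1+|x|)^{-(n+\alpha)}\in L^1(\mathbb{R}^n)$ when $\alpha>0$), $S_{\alpha}^{\beta}\rho_0\in X_T$ with norm bounded by $C(1+T^{\beta})\|\rho_0\|_{L^{\infty}_{n+\alpha}}$.

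For the bilinear estimate, I would apply \eqref{3.22} to $g(s):=u(s)B(v(s))$ inside the Duhamel integral. Using Lemma \ref{lmm:inftyB} to bound $\|B(v(s))\|_{\infty}\le C\|v\|_{X_T}$, and again \eqref{w412} for $\|u(s)\|_1\le C\|u(s)\|_{L^{\infty}_{n+\alpha}}$, both $\|g(s)\|_{L^{\infty}_{n+\alpha}}$ and $\|g(s)\|_1$ are controlled by $C\|u\|_{X_T}\|v\|_{X_T}$. Integrating the two singular factors in \eqref{3.22} (both integrable in $s$ since $\beta-\beta/\alpha>0$ and $2\beta-\beta/\alpha>0$), I obtain
\begin{equation*}
\|H(u,v)\|_{X_T}\le C\bigl(T^{\beta-\beta/\alpha}+T^{2\beta-\beta/\alpha}\bigr)\|u\|_{X_T}\|v\|_{X_T}.
\end{equation*}
Choosing $T$ small enough so that $C(T^{\beta-\beta/\alpha}+T^{2\beta-\beta/\alpha})\|S_{\alpha}^{\beta}\rho_0\|_{X_T}<1/4$, Lemma \ref{lmm:basicexis} yields a mild solution in a ball of $X_T$.

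Uniqueness in the full space $X_T$ (not only in the small ball) would be proved directly: for two solutions $\rho_1,\rho_2\in X_T$ set $e(t)=\sup_{0\le s\le t}\|\rho_1-\rho_2\|_{L^{\infty}_{n+\alpha}}$; repeating the bilinear estimate on $H(\rho_1,\rho_1)-H(\rho_2,\rho_2)=H(\rho_1-\rho_2,\rho_1)+H(\rho_2,\rho_1-\rho_2)$ and invoking the fractional Gr\"onwall-type comparison principle from Lemma \ref{lmm:comparison} / Corollary \ref{cor:gronwall} gives $e\equiv 0$. The blow-up alternative for $T_b^{\alpha}$ then follows by the same continuation argument used at the end of Theorem \ref{thm:localexis}.

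For the identification with the mild solutions of Theorem \ref{thm:LpL1} and the continuity in $L^p$, I would observe that $\rho_0\in L^{\infty}_{n+\alpha}\subset L^1\cap L^{\infty}$ by \eqref{w412}, so in particular $\rho_0\in L^1\cap L^p$ for every admissible $p$ in Theorem \ref{thm:LpL1}. The weighted solution $\rho(\cdot,t)$ also lies in $L^1\cap L^{\infty}$ for a.e.\ $t$, so interpolation places $\rho\in L^{\infty}([0,T_b^{\alpha}); L^r(\mathbb{R}^n))$ for every $r\in[1,\infty]$; by the uniqueness statement of Theorem \ref{thm:LpL1} it coincides there with the $L^1\cap L^p$ mild solution. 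The time-continuity $\rho\in C([0,T_b^{\alpha}),L^p(\mathbb{R}^n))$ for every $p\in[1,\infty)$ is then transferred from Theorem \ref{thm:LpL1} (applied with $p$ arbitrary in the admissible range, extended to all $p\in[1,\infty)$ by interpolation with the uniform $L^{\infty}$ bound coming from the weight). Mass conservation follows exactly as in the last paragraph of Theorem \ref{thm:LpL1}: split \eqref{eq:mildreformulation}, use $\int P(x,t)\,dx=1$ from Lemma \ref{lmm:PQ} for the linear part, and approximate $\rho B(\rho)\in L^{\infty}([0,t];L^1)$ by $C_c^{\infty}$ functions to kill the divergence term.

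The main obstacle I anticipate is the bilinear estimate: matching the two singular time factors in \eqref{3.22} with the two ways of controlling $\|u B(v)\|$ (via the weighted norm versus via the $L^1$ norm) so that the integrated result is $o(1)$ as $T\to 0$, and simultaneously keeping the constants independent of $T$ on a fixed compact interval. Everything else is bookkeeping: propagating the $L^1$-integrability and $L^p$-continuity from Theorem \ref{thm:LpL1} via the uniqueness clause, and running the standard extension argument for the blow-up alternative.
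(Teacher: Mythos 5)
Your proposal follows essentially the same route as the paper: Lemma \ref{lmm:basicexis} in $X_T=L^{\infty}([0,T];L^{\infty}_{n+\alpha})$, with \eqref{3.21} for the linear part and \eqref{3.22} combined with Lemma \ref{lmm:inftyB} and the embedding \eqref{w412} for the bilinear estimate, yielding exactly the factor $C(T^{\beta-\beta/\alpha}+T^{2\beta-\beta/\alpha})$; the identification with Theorem \ref{thm:LpL1} and the blow-up alternative are also handled the same way.

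One ordering point needs repair in your last paragraph. The uniqueness class of Theorem \ref{thm:LpL1} is $C([0,T];L^1)\cap C([0,T];L^p)$, so you cannot invoke its uniqueness for the weighted solution while only knowing $\rho\in L^{\infty}([0,T];L^r)$; continuity in time must come \emph{before} the identification, not be ``transferred'' from Theorem \ref{thm:LpL1} afterwards. The paper closes this by first deducing $\rho\in C([0,T],L^p(\mathbb{R}^n))$ for all $p\in[1,\infty)$ directly from the Duhamel representation \eqref{eq:mildreformulation}, using the uniform $L^1\cap L^{\infty}$ bound, and only then applying the uniqueness clause of Theorem \ref{thm:LpL1} (which, as a by-product, also delivers mass conservation without redoing the approximation argument). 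Your direct Gr\"onwall uniqueness in $X_T$ is fine and is in fact the same device the paper uses in Theorem \ref{thm:localexis}; it just does not substitute for the continuity step above.
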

\begin{proof}
We can construct the existence of solution to \eqref{generalized Keller-Segel} in $X_T$ applying Proposition \ref{pro:weightedfund}.

By equation \eqref{3.21} and \eqref{w412}, we find
\begin{equation}\label{w413}
\|S_\alpha^\beta(\cdot)\rho_0\|_{X_T}\le C(1+T^{\beta})\|\rho_0\|_{L_{n+\alpha}^{\infty}}.
\end{equation}

Due to the definition of the bilinear form \eqref{eq:BilinearH} and estimate \eqref{3.22}, we have
\begin{eqnarray}
\begin{aligned}\label{Bilinearform1}
\|H(\rho,\tilde{\rho})\|_{X_T}&=\mbox{ess}\sup_{x\in\mathbb{R}^n}(1+|x|)^{n+\alpha}\left|\int_0^tT_\alpha^\beta(t-\tau)\nabla(\rho B(\tilde{\rho}))(\tau)d\tau \right|\\
&\leq \int_0^t\|\nabla\cdot( T_\alpha^\beta(t-\tau)(\rho B(\tilde{\rho}))(\tau))\|_{L^\infty_{n+\alpha}}d\tau\\
&\leq C\int_0^t(t-\tau)^{-\frac{\beta}{\alpha}+\beta-1}\|(\rho B(\tilde{\rho}))(\tau)\|_{L^\infty_{n+\alpha}}+(t-\tau)^{2\beta-\frac{\beta}{\alpha}-1}\|(\rho B(\tilde{\rho}))(\tau)\|_1 d\tau
\end{aligned}
\end{eqnarray}

By Lemma \ref{lmm:inftyB} and \eqref{w412}, we can deduce
\begin{gather}\label{w415}
\|\rho B(\tilde{\rho})(\tau)\|_{L^\infty_{\alpha+n}}=\|(1+|x|)^{n+\alpha}\rho(\tau)B(\tilde{\rho})(\tau)\|_{\infty}
\leq C\|\rho\|_{X_T}\|\tilde{\rho}\|_{X_T}.
\end{gather}
and
\begin{gather}\label{w416}
\|(\rho B(\tilde{\rho}))(\tau)\|_1 \leq C\|\rho\|_1\| B(\tilde{\rho})(\tau)\|_\infty \leq C\|\rho\|_{X_T}\|\tilde{\rho}\|_{X_T}.
\end{gather}

It follows from \eqref{Bilinearform1}, \eqref{w415} and \eqref{w416} that
\begin{equation}\label{WHE}
\|H(\rho,\tilde{\rho})\|_{X_T}\leq C t^{\beta-\frac{\beta}{\alpha}}\|\rho\|_{X_T}\|\tilde{\rho}\|_{X_T}+C t^{2\beta-\frac{\beta}{\alpha}}\|\rho\|_{X_T}\|\tilde{\rho}\|_{X_T}.
\end{equation}

Combining \eqref{w413} and \eqref{WHE}, Lemma \ref{lmm:basicexis} yields the existence result in $X_T$ .

Since $\rho \in L^{\infty}([0, T], L^1(\mathbb{R}^n))\cap L^{\infty}([0, T], L^{\infty}(\mathbb{R}^n))$, we can deduce that $\rho \in C([0, T], L^p(\mathbb{R}^n))$ for any $p\in [1,\infty)$ using \eqref{eq:mildreformulation}. On the other hand, $\rho_0\in L^\infty_{n+\alpha}(\mathbb{R}^n)$ implies that it is in $L^p(\mathbb{R}^n)$ for any $p\in [1,\infty]$.  Then, Theorem \ref{thm:LpL1}  tells us that there is a unique mild solution $\tilde{\rho}$ in
$C([0, T_1], L^1(\mathbb{R}^n))\cap C([0, T_1], L^{p_1}(\mathbb{R}^n))$ for some $T_1>0$ and $p_1\in (1,\infty)$.
 The uniqueness part of Theorem \ref{thm:LpL1} ensures that $\rho=\tilde{\rho}$ on $[0,\min(T,T_1)]$. Moreover, equation \eqref{w412} and the blowup scenario of Theorem \ref{thm:LpL1} imply that $\tilde{\rho}$ exists on $[0, T_b^{\alpha})$. Theorem \ref{thm:LpL1} again then ensures  the uniqueness of $\rho$ and the integral preservation. Lastly, since $\rho\in C([0, T], L^{1}(\mathbb{R}^n))\cap L^{\infty}([0, T], L^{\infty}(\mathbb{R}^n))$, we deduce that $\rho\in C([0, T], L^{p}(\mathbb{R}^n))$ for all $p\in [1,\infty)$.

 The statement regarding $T_b^{\alpha}$ follows in a similar approach as the argument in Theorem \ref{thm:localexis} and we omit the details.

\end{proof}

\begin{rem}\label{rmk:moment}
\begin{itemize}
\item Note that we usually do not have $\rho\in C([0, T], L_{n+\alpha}^{\infty}(\mathbb{R}^n))$. Consider that $\rho_0$ is compactly supported but essentially discontinuous. It is well-known that the mollification of $\rho_0$ does not converge to $\rho_0$ in $L^{\infty}(\mathbb{R}^n)$. Hence, $S_{\alpha}^{\beta}(t)\rho_0 \not\to \rho_0$ in $L_{n+\alpha}^{\infty}(\mathbb{R}^n)$.

\item
The fact that $\rho\in X_T$ clearly implies that for $\nu<\alpha$
\[
\int_{\mathbb{R}^n} |x|^{\nu}|\rho|\,dx<\infty.
\]
Indeed, according to the asymptotic form of the fundamental solution (equation \eqref{eq:Pfaraway}), if $\alpha\in (1,2)$, we cannot expect the solution to have moment of order $\nu\ge \alpha$ (see also \cite[Section 2]{BK}).
\end{itemize}
\end{rem}

\section{Nonnegativity preservation and conservation of mass}\label{sec:positivity}
In this subsection, we intend to discuss the problem that if the fact that the initial value $\rho_0$ of \eqref{generalized Keller-Segel} is nonnegative can imply that $\rho(x,t)$ remains nonnegative for every $0<t<T$.

Given a function $u$, we recall $u^-=-\min(u, 0)\ge 0$, $u^+=\max(u, 0)$ so that
\[
u=u^+-u^-.
\]

We first have the following claim, which essentially follows from \cite[Theorem 1.1]{bouleau1986}.
\begin{lem}\label{lmm:contraction}
Let $A=(-\Delta)^{\frac{\alpha}{2}}$. Suppose $\rho\in L^2(\mathbb{R}^n)$, then for any $t>0$,
\begin{gather}
\langle e^{-tA}\rho, \rho^+\rangle\le \|\rho^+\|_2^2=\int_{\mathbb{R}^+}\rho \rho^+\,dx.
\end{gather}
\end{lem}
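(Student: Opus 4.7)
The plan is to exploit two fundamental properties of $e^{-tA}$ with $A=(-\Delta)^{\alpha/2}$: positivity preservation (the kernel $P(x,t)\ge 0$ by Lemma \ref{lmm:PQ}) and $L^{2}$-contractivity (from Plancherel, since the Fourier multiplier $e^{-t|\xi|^{\alpha}}\le 1$; equivalently, from Young's inequality combined with $\|P(\cdot,t)\|_{1}=1$). With these in hand, the inequality should fall out in two short lines; there is no need to invoke the full Bouleau--Hirsch machinery cited in the statement.

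The main step is a sign-splitting. Decompose $\rho=\rho^{+}-\rho^{-}$ with $\rho^{+},\rho^{-}\ge 0$ having disjoint supports and write
\[
\langle e^{-tA}\rho,\rho^{+}\rangle = \langle e^{-tA}\rho^{+},\rho^{+}\rangle - \langle e^{-tA}\rho^{-},\rho^{+}\rangle.
\]
The second term is nonnegative, because $e^{-tA}\rho^{-}=P(\cdot,t)\ast \rho^{-}$ is a convolution of two nonnegative functions and is therefore pointwise nonnegative, which pairs with $\rho^{+}\ge 0$ to give a nonnegative quantity. Consequently
\[
\langle e^{-tA}\rho,\rho^{+}\rangle\le \langle e^{-tA}\rho^{+},\rho^{+}\rangle\le \|e^{-tA}\rho^{+}\|_{2}\|\rho^{+}\|_{2}\le \|\rho^{+}\|_{2}^{2},
\]
where the second inequality is Cauchy--Schwarz and the third is $L^{2}$-contractivity of $e^{-tA}$. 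The trailing identity $\|\rho^{+}\|_{2}^{2}=\int \rho\rho^{+}\,dx$ is immediate from the pointwise equality $\rho\rho^{+}=(\rho^{+})^{2}$, which holds because $\rho^{+}$ and $\rho^{-}$ have disjoint supports.

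There is no serious obstacle: the argument only needs $\rho^{\pm}\in L^{2}$, which is automatic from $\rho\in L^{2}$ and $|\rho^{\pm}|\le|\rho|$. The single point requiring care is the pointwise nonnegativity of $P(\cdot,t)\ast\rho^{-}$, which is standard for convolutions of nonnegative integrable kernels with nonnegative $L^{2}$ functions (by Tonelli's theorem applied to the defining integral); if one prefers to avoid any a.e. subtleties, one can first regularize $\rho^{-}$ by a nonnegative mollifier, perform the estimate, and then pass to the limit using $L^{2}$-continuity. This direct route makes clear that the stated inequality is just a manifestation of the sub-Markovian character of the fractional heat semigroup associated with $(-\Delta)^{\alpha/2}$.
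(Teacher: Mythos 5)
Your proof is correct and follows essentially the same route as the paper: split $\rho=\rho^{+}-\rho^{-}$, drop the term $\langle e^{-tA}\rho^{-},\rho^{+}\rangle\ge 0$ using positivity of the kernel $P$, and then bound $\langle e^{-tA}\rho^{+},\rho^{+}\rangle$ by $\|\rho^{+}\|_{2}^{2}$ via $L^{2}$-contractivity. The only cosmetic difference is that the paper handles the last step by writing $\langle e^{-tA}\rho^{+},\rho^{+}\rangle=\|e^{-tA/2}\rho^{+}\|_{2}^{2}$ (semigroup splitting plus self-adjointness) whereas you use Cauchy--Schwarz; both are equally valid.
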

\begin{proof}
The kernel $P$ corresponding to $e^{-sA}$ is non-negative and integrates to $1$ for any $s>0$, which is the $\beta\to 1$ limit of Lemma \ref{lmm:PQ}. Then, it follows that
\begin{gather*}
\langle  e^{-tA}\rho, \rho^+\rangle
=\langle e^{-tA}\rho^+, \rho^+\rangle -\langle e^{-tA}\rho^-, \rho^+\rangle
\le \langle e^{-tA}\rho^+, \rho^+\rangle=\|e^{-\frac{tA}{2}}\rho^+\|_2^2\le \|\rho^+\|_2^2.
\end{gather*}
The first inequality follows because $e^{-tA}\rho^-$ is a non-negative function since $P\ge 0$, and the second inequality follows from the fact that for any $p\ge 1$, $e^{-\frac{tA}{2}}$ is a contraction in $L^p(\mathbb{R}^n)$ space (because $\|P\|_1=1$).
\end{proof}

\begin{cor}\label{cor:lemfrac}
Let $A=(-\Delta)^{\frac{\alpha}{2}}$. If $\rho\in H^{\delta,2}(\mathbb{R}^n)$ for $\delta\in [0, 1]$ and $A\rho\in H^{-\delta,2}(\mathbb{R}^n)$, then we have
\begin{gather}
\langle A\rho, \rho^+\rangle \ge 0,~~~\langle A\rho, \rho^-\rangle \le 0.
\end{gather}
\end{cor}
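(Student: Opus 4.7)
The plan is to obtain the corollary as an infinitesimal consequence of Lemma \ref{lmm:contraction} by differentiating the semigroup $e^{-tA}$ at $t=0^+$, tested against $\rho^+$. First I would verify that the pairing $\langle A\rho,\rho^+\rangle$ is well-defined. Since $\rho\in H^{\delta,2}(\mathbb{R}^n)$ with $\delta\in[0,1]$, the truncation $\rho^+$ also lies in $H^{\delta,2}(\mathbb{R}^n)$: for $\delta=0$ this is trivial, for $\delta=1$ it is the standard Stampacchia truncation lemma, and for $\delta\in(0,1)$ it follows by real interpolation of the sublinear mapping $\rho\mapsto\rho^+$ between $L^2$ and $H^{1,2}$. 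With $\rho^+\in H^{\delta,2}$ and $A\rho\in H^{-\delta,2}$ (by hypothesis), the expression $\langle A\rho,\rho^+\rangle$ is unambiguous as a $H^{-\delta,2}\times H^{\delta,2}$ duality pairing.

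Second, I would invoke the generator identity
\[
A\rho=\lim_{t\to 0^+}\frac{\rho-e^{-tA}\rho}{t}=\lim_{t\to 0^+}\frac{1}{t}\int_0^t e^{-sA}(A\rho)\,ds,
\]
where the convergence holds in $H^{-\delta,2}(\mathbb{R}^n)$. This uses that $\{e^{-sA}\}_{s\ge 0}$ is a strongly continuous semigroup on $H^{-\delta,2}$ (a consequence of self-adjointness of $A$ in $L^2$ and the Bessel-potential isometry $H^{s,2}\cong L^2$). Since the pairing is continuous on $H^{-\delta,2}\times H^{\delta,2}$, I may pass to the limit and obtain
\[
\langle A\rho,\rho^+\rangle=\lim_{t\to 0^+}\frac{1}{t}\langle\rho-e^{-tA}\rho,\rho^+\rangle.
\]

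Third, for each fixed $t>0$ the right-hand side is an honest $L^2$ inner product, since $\rho,e^{-tA}\rho,\rho^+\in L^2$. Using $\langle\rho,\rho^+\rangle=\int(\rho^+)^2\,dx=\|\rho^+\|_2^2$ and Lemma \ref{lmm:contraction} to bound $\langle e^{-tA}\rho,\rho^+\rangle\le \|\rho^+\|_2^2$, every approximant
\[
\frac{1}{t}\langle\rho-e^{-tA}\rho,\rho^+\rangle=\frac{1}{t}\bigl(\|\rho^+\|_2^2-\langle e^{-tA}\rho,\rho^+\rangle\bigr)
\]
is non-negative, so the limit $\langle A\rho,\rho^+\rangle\ge 0$. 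For the companion inequality, apply the first statement to $-\rho$, which has the same regularity, and use $(-\rho)^+=\rho^-$ together with linearity of $A$ to conclude $\langle A\rho,\rho^-\rangle=-\langle A(-\rho),(-\rho)^+\rangle\le 0$.

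The main obstacle is the truncation regularity $\rho^+\in H^{\delta,2}$ for fractional $\delta\in(0,1)$; all the remaining pieces are routine semigroup calculus. If the interpolation argument feels too heavy, an alternative is to replace $\rho^+$ by a smooth approximation $\phi_\varepsilon(\rho)$ with $\phi_\varepsilon\in C^1(\mathbb{R})$, $\phi_\varepsilon\ge 0$, $\phi_\varepsilon(s)\to s^+$, and $0\le\phi_\varepsilon'\le 1$, run the same semigroup argument where the chain rule and $\phi_\varepsilon'\in L^\infty$ keep everything inside $H^{\delta,2}$, and take $\varepsilon\to 0$ at the end by dominated convergence.
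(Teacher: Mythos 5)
Your proposal is correct and follows essentially the same route as the paper: both derive the inequality by pairing the difference quotient $\frac{1}{t}(\rho-e^{-tA}\rho)$ with $\rho^+$, invoking Lemma \ref{lmm:contraction} to see each approximant is nonnegative, passing to the limit via continuity of the $H^{-\delta,2}\times H^{\delta,2}$ duality, and obtaining the second inequality by applying the first to $-\rho$. The only difference is that you spell out the truncation regularity $\rho^+\in H^{\delta,2}$ (which the paper leaves implicit); this is a worthwhile addition, and for $\delta\in(0,1)$ it follows most directly from the Gagliardo seminorm bound $|\rho^+(x)-\rho^+(y)|\le|\rho(x)-\rho(y)|$.
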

\begin{proof}
By Lemma \ref{lmm:contraction} and the fact $L^2(\mathbb{R}^n)\subset H^{\delta,2}(\mathbb{R}^n)$, we have
\begin{equation}\label{5.3}
\langle (e^{-tA}\rho-\rho), \rho^+\rangle\le 0.
\end{equation}
Hence, using \eqref{5.3}, we find
\begin{equation}\label{5.4}
\langle A\rho, \rho^+\rangle=-\left\langle \lim_{t\to 0}\frac{1}{t}(e^{-tA}\rho-\rho), \rho^+\right\rangle
\ge -\limsup_{t\to 0}\frac{1}{t}\langle e^{-tA}\rho-\rho, \rho^+\rangle \ge 0.
\end{equation}

Inequality \eqref{5.4} implies that
\[
-\langle e^{-tA}\rho, \rho^-\rangle =\langle e^{-tA}(-\rho), (-\rho)^+\rangle \ge 0,
\]
and the second claim follows.
\end{proof}

\begin{thm}\label{thm:positivity}
In Theorem \ref{thm:localexis} (or Theorem \ref{thm:globalexis}, Theorem \ref{thm:LpL1}, Theorem \ref{thm:L1critical}, Theorem \ref{thm:weighted}), if we also have  $\rho_0\ge 0$, then for all $t$ in the interval of existence we have
\begin{gather}
\rho(x, t)\ge 0.
\end{gather}
\end{thm}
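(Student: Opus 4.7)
The plan is to set $u(t):=\|\rho^-(t)\|_2^2$, derive a fractional differential inequality of the form $^c_0D_t^\beta u\le Cu$ on the existence interval, and then invoke Corollary \ref{cor:gronwall} with $a=0$, $b=C$ and $u(0)=\|\rho_0^-\|_2^2=0$ to conclude $u\equiv 0$, hence $\rho^-\equiv 0$. The starting point is a formal $L^2$ energy identity: pair the equation $^c_0D_t^\beta\rho+(-\Delta)^{\alpha/2}\rho+\nabla\cdot(\rho B(\rho))=0$ against $\rho^-$ and use the two building blocks highlighted in the introduction, namely Corollary \ref{cor:lemfrac} (giving $\int ((-\Delta)^{\alpha/2}\rho)\rho^-\,dx\le 0$) and the Caputo chain-rule inequality $\int ({}^c_0D_t^\beta\rho)\rho^-\,dx\le -\tfrac{1}{2}{}^c_0D_t^\beta\|\rho^-\|_2^2$. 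Combining these yields
\[
\tfrac{1}{2}\,{}^c_0D_t^\beta\|\rho^-\|_2^2\ \le\ -\int_{\mathbb{R}^n}\nabla\cdot(\rho B(\rho))\,\rho^-\,dx.
\]

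For the aggregation term I would integrate by parts and use $\nabla\rho^-=-\mathbf{1}_{\{\rho<0\}}\nabla\rho$, together with the distributional identity $\nabla\cdot B(\rho)=-(-\Delta)^{1-\gamma/2}\rho$, to obtain
\[
-\int\nabla\cdot(\rho B(\rho))\rho^-\,dx\ =\ -\tfrac{1}{2}\int\big((-\Delta)^{1-\gamma/2}\rho\big)(\rho^-)^2\,dx.
\]
Applying H\"older's inequality together with the Hardy--Littlewood--Sobolev estimate of Lemma \ref{lmm:hls} (relevant when $\gamma<2$, so that $(-\Delta)^{1-\gamma/2}$ is a positive-order Riesz-type operator), or a direct smoothing bound when $\gamma\ge 2$, and exploiting the $L^p$ or $L^\infty_{n+\alpha}$ integrability of $\rho$ supplied by whichever existence theorem is in force, one bounds this term by $C(T)\|\rho^-\|_2^2$ on every $[0,T]$ inside the existence interval. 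Hence $^c_0D_t^\beta u\le 2C(T)u$, which combined with $u(0)=0$ forces $u\equiv 0$ by Corollary \ref{cor:gronwall}.

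The main obstacle is making the formal $L^2$ pairing and the integration by parts rigorous: the mild solution is a priori only defined through the Duhamel identity \eqref{eq:mildreformulation} and does not automatically carry enough time/spatial regularity to apply the Caputo chain rule, to move $(-\Delta)^{\alpha/2}$ onto the nonsmooth test function $\rho^-$, or to integrate by parts the nonlinear flux against $\rho^-$. Following the indication in the introduction I would handle this by a regularization argument --- for instance spatially mollifying $\rho_\varepsilon=\rho\ast\phi_\varepsilon$ (so that $\rho_{0,\varepsilon}=\rho_0\ast\phi_\varepsilon\ge 0$), deriving the fractional differential inequality at the level of $\|\rho_\varepsilon^-\|_2^2$ with a commutator remainder that vanishes as $\varepsilon\to 0$, and passing to the limit via the time-continuity provided by Proposition \ref{pro:timecontinuity} and the existence theorems. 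A secondary subtlety is that in the setting of Theorem \ref{thm:localexis} the exponent $p$ may lie below $2$; in that case the same scheme should be executed with the energy $\|\rho^-\|_r^r$ for a suitable $r\in(1,p]$, noting that the analogues of Corollary \ref{cor:lemfrac} and of the Caputo chain-rule inequality remain valid for every convex power, so the structure of the argument is unchanged.
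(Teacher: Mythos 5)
Your overall architecture (pair against $\rho^-$, use Corollary \ref{cor:lemfrac} and the Caputo chain--rule inequality, close with the fractional Gronwall lemma from $\|\rho_0^-\|_2^2=0$) matches the paper's, and the two sign/Caputo ingredients are exactly the ones used there. But there is a genuine gap in your treatment of the aggregation term, and it is precisely the step the paper's proof is engineered to avoid. After your integration by parts you must estimate
$\bigl|\int ((-\Delta)^{1-\gamma/2}\rho)(\rho^-)^2\,dx\bigr|$ by $C\|\rho^-\|_2^2$. Since $(\rho^-)^2$ is only in $L^1$ (plus some $L^{p/2}$ by interpolation), closing the Gronwall inequality in the variable $u=\|\rho^-\|_2^2$ forces you to put $\nabla\cdot B(\rho)=-(-\Delta)^{1-\gamma/2}\rho$ in $L^\infty$. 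For $1<\gamma<2$ this is a \emph{positive}-order operator, and Lemma \ref{lmm:hls} cannot help: Hardy--Littlewood--Sobolev controls convolutions with $|x|^{-(n-\ell)}$, i.e.\ negative-order Riesz potentials, and gives no bound on $(-\Delta)^{1-\gamma/2}\rho$ in terms of any $L^p$ norm of $\rho$ supplied by the existence theorems. If instead you pair $(-\Delta)^{1-\gamma/2}\rho\in L^{q'}$ against $(\rho^-)^2\in L^{q}$ with $q>1$, you obtain $\|\rho^-\|_{2q}^2$, which either does not close, or after interpolation yields ${}^c_0D_t^\beta u\le Cu^{\theta}$ with $\theta<1$ --- and such an inequality with $u(0)=0$ does \emph{not} force $u\equiv 0$. (The only case where your computation genuinely works is $\gamma=2$, where $\nabla\cdot B(\rho)=-\rho$ and the term becomes $-\tfrac12\int(\rho^-)^3\le 0$.)

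The paper sidesteps this entirely: it introduces the auxiliary problem $\varrho^{(n)}=S_\alpha^\beta\rho_0-\int_0^tT_\alpha^\beta(t-s)\nabla\cdot\bigl((\varrho^{(n)})^+\,B_{1/n}(\rho)\bigr)\,ds$, with the drift $B_{1/n}(\rho)$ frozen and mollified (so it is smooth and bounded, giving the $H^{\sigma,2}$ regularity needed to justify all pairings) and, crucially, with the \emph{positive part} $(\varrho^{(n)})^+$ in the flux. Then $\langle\nabla\cdot((\varrho^{(n)})^+a_n),(\varrho^{(n)})^-\rangle=0$ identically, because $(\varrho^{(n)})^+\nabla(\varrho^{(n)})^-=0$ pointwise; no bound on $\nabla\cdot B$ is ever required. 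Nonnegativity of $\varrho^{(n)}$ follows from your energy scheme applied to this modified equation, after which $(\varrho^{(n)})^+=\varrho^{(n)}$ and a Gronwall/comparison argument shows $\varrho^{(n)}\to\rho$ in $C([0,T];L^p)$, transferring nonnegativity to $\rho$. Your proposed regularization (mollifying $\rho$ itself and tracking a commutator) does not repair the defect, since the problematic estimate on the nonlinear term survives the mollification. A minor additional point: your first display has the wrong sign on the right-hand side (it should read $\le+\int\nabla\cdot(\rho B(\rho))\rho^-\,dx$ after discarding the nonnegative term $-\langle A\rho,\rho^-\rangle$), though this does not affect the substance of the criticism.
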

\begin{proof}

We will just prove the case that the solutions satisfy the assumption in Theorem \ref{thm:localexis} here. The proof for other cases are similar.

We introduce a mollifier $J_{\epsilon}(x)=\frac{1}{\epsilon^n}J(\frac{x}{\epsilon})$ and consider the operator $B_{\epsilon}$ defined by
\[
B_{\epsilon}(u):=B(J_{\epsilon}*u)=J_{\epsilon}*B(u).
\]
Recall that $T_b$ is the largest time of existence. We fix $T\in (0, T_b)$ and let $\rho$ be the mild solution on $[0, T]$. We define $\rho(t)=\rho(T)$ for $t\ge T$.

Now, we first pick approximating sequence $\varrho_0^{(n)}\in L^p(\mathbb{R}^n)\cap L^2(\mathbb{R}^n)$ such that $\varrho_0^{(n)}\ge 0$ and $\varrho_0^{(n)}\to \rho_0$ in $L^p(\mathbb{R}^n)$. For example, we can choose $f_n\in C_c^{\infty}(\mathbb{R}^n)$ such that $f_n\to \rho_0$ in $L^p(\mathbb{R}^n)$. Denote $f_n\vee0:=\max(f_n,0)$ and picking $\varrho_0^{(n)}=f_n\vee 0$ suffices because $|\rho_0-f_n\vee 0|\le |\rho_0-f_n|$ due to the fact $\rho_0\ge 0$.

For the purpose of the proof, we consider the following auxiliary problem
\begin{gather}\label{eq:rhon}
 \varrho^{(n)}(t)=S_{\alpha}^{\beta}(t)\rho_0-\int_0^t  T_{\alpha}^{\beta}(t-s)\nabla\cdot((\varrho^{(n)})^+ B_{\frac{1}{n}}(\rho))\,ds.
\end{gather}

We denote
\[
a_n(x,t):=B_{\frac{1}{n}}(\rho),
\]
and it is  a smooth function with derivatives bounded in $[0,\infty)\times\mathbb{R}^n$ by the properties of mollification.

For $t>0$, since $E_{\beta}(-s)\sim C_1s^{-1}$ as $s\to\infty$, we observe
\begin{gather}\label{eq:estH1}
\|S_{\alpha}^{\beta}u\|_{H^{\sigma,2}}^2=\int_{\mathbb{R}^n}(1+|\xi|^{2\sigma}) (E_{\beta}(-t^{\beta}|\xi|^{\alpha}))^2|\hat{u}_{\xi}|^2d\xi\le C\|u\|_2^2t^{-\frac{2\sigma \beta}{\alpha}},~\sigma\in [0,\alpha].
\end{gather}
To deal with the second term in \eqref{eq:rhon}, due to the definition of $T_\alpha^\beta$, we note that
\[
\|\int_0^t T_{\alpha}^{\beta}(t-s)\nabla\cdot v\,ds\|
\le \int_0^t (t-s)^{\beta-1}\|E_{\beta,\beta}(-(t-s)^{\beta}A)\nabla\cdot v(s)\|\,ds
\]
holds for any norm. Hence, aiming to compute $H^{\sigma,2}$ norm, we have
\begin{eqnarray}
\begin{aligned}\label{5.5}
&\|E_{\beta,\beta}(-(t-s)^{\beta}A)\nabla\cdot v(s)\|_{H^{\sigma,2}}^2\\
=&\int_{\mathbb{R}^n}(1+|\xi|^{2\sigma}) \left|\xi E_{\beta,\beta}(-(t-s)^{\beta}|\xi|^{\alpha})\hat{v}_\xi \right|^2d\xi \\
\le&\int_{\mathbb{R}^n}(1+|\xi|^{2\sigma+2-2\delta})E_{\beta,\beta}^2(-(t-s)^{\beta}|\xi|^{\alpha})|\xi|^{2\delta}|\hat{v}_\xi|^2\, d\xi \\
\le&C(1+(t-s)^{-\frac{\beta(2\sigma+2-2\delta)}{\alpha}})\|v\|_{H^{\delta,2}}^2.
\end{aligned}
\end{eqnarray}
Since $E_{\beta,\beta}(-s)\sim C_2s^{-2}$ as $s\to\infty$, the last inequality is valid
if  $2\sigma+2-2\delta\le 4\alpha$.This computation implies that
\begin{gather}\label{eq:estH2}
\|\int_0^t T_{\alpha}^{\beta}(t-s)\nabla\cdot v\,ds\|_{H^{\sigma,2}}
\le C(T)(1+\int_0^t(t-s)^{\beta-1-\beta\frac{2\sigma+2-2\delta}{2\alpha}})\|v\|_{H^{\delta,2}}
\end{gather}
Picking $\sigma=\delta=0$ in \eqref{eq:estH1}-\eqref{eq:estH2} (with $u=\rho, v=a_n\rho^+$), and applying the method in \cite[Appendix]{llljg17}, we find that
\begin{gather}\label{eq:regularityrhon}
\varrho^{(n)}\in C^{\beta}([0,\infty), L^2(\mathbb{R}^n))\cap C^{\infty}((0,\infty), L^2(\mathbb{R}^n)).
\end{gather}
Using this time regularity, we have that in $C([0,\infty), H^{-\alpha, 2}(\mathbb{R}^n))$, $\varrho^{(n)}$ solves the following initial value problem in strong sense:
\begin{equation}\label{modifiedKS}
\left\{
  \begin{aligned}
  &^c_0D_t^\beta  \varrho^{(n)}+(-\triangle)^{\frac{\alpha}{2}}\varrho^{(n)}=-\nabla\cdot((\varrho^{(n)})^+ a_n(x,t))  && \mbox{in } (x,t)\in \mathbb{R}^n\times(0,\infty),\\
  &\varrho(x,0)=\varrho_0^{(n)}(x)\ge 0,\\
  \end{aligned}
     \right.
\end{equation}
With the time regularity and \eqref{Integrationby} in Lemma \ref{lmm:defconsistency}, we find that in $H^{-\alpha,2}(\mathbb{R}^n)$
\begin{equation}\label{eq:Capvarrho}
^c_0D_t^\beta \varrho^{(n)}=\frac{1}{\Gamma(1-\beta)}\left(\frac{\varrho^{(n)}-\varrho_0^{(n)}}{t^\beta}+\beta\int_0^t\frac{\varrho^{(n)}-\varrho^{(n)}(x,s)}{(t-s)^{\beta+1}}ds \right).
\end{equation}

It is clear that
\[
\rho \mapsto a_n\rho^+
\]
is bounded in $L^2(\mathbb{R}^n)$ and $H^{1,2}(\mathbb{R}^n)$. By interpolation, this mapping is bounded in $H^{\delta,2}, 0\le \delta\le 1$.
We now denote
\[
v^n=(\varrho^{(n)})^+a_n.
\]
We know $\rho^{n}\in C([0, T]; L^2(\mathbb{R}^n))$. Hence,
we can pick $\delta=0$, $2\sigma+2-2\delta<2\alpha$, or $\sigma<\alpha-1$ in \eqref{eq:estH1}-\eqref{eq:estH2} to find
\[
\|\rho^n\|_{H^{\sigma,2}}\le Ct^{-\frac{\sigma \beta}{\alpha}}+
C(T)(1+\int_0^t(t-s)^{\beta-1-\beta\frac{2\sigma+2-2\delta}{2\alpha}})\|v^n\|_{2}\le Ct^{-\frac{\sigma \beta}{\alpha}}.
\]
This allows us to pick $\delta\in (0, \alpha-1)$ and  $\sigma<\alpha+\delta-1$, which then improves the regularity of $\rho^n$. Clearly, we can continue this process in finite steps so that $\delta=1$ and $\sigma<\alpha$. Hence, for $t>0$,
\[
\varrho^{(n)}\in H^{\sigma, 2}(\mathbb{R}^n),~~\sigma\in [0, \alpha).
\]
 Consequently, for $t>0$, $\varrho^{(n)}\in H^{1,2}(\mathbb{R}^n)$ and $(-\Delta)^{\frac{\alpha}{2}}\varrho^{(n)}\in H^{-\epsilon,2}(\mathbb{R}^n)$ for any $\epsilon>0$.

The right hand side of \eqref{eq:Capvarrho} also makes sense in $C([0,\infty), L^2(\mathbb{R}^n))$. Since $(\varrho^{(n)})^-\in L^2(\mathbb{R}^n)$, we can multiply $(\varrho^{(n)})^-$
on both sides of \eqref{eq:Capvarrho} for $t>0$ and take integral with respect to $x$. Together with the facts $\varrho^{(n)}\in H^{1,2}(\mathbb{R}^n)$ and $(-\Delta)^{\frac{\alpha}{2}}\varrho^{(n)}\in H^{-1,2}(\mathbb{R}^n)$ for $t>0$, we can pair both sides of \eqref{modifiedKS} with $(\varrho^{(n)})^-$ with respect to $x$ to get for any $t>0$:
\begin{equation}\label{modifiedKS1}
  \int_{\mathbb{R}^n} {^c_0D_t^\beta}  \varrho^{(n)}(\varrho^{(n)})^- dx+\langle (-\triangle)^{\frac{\alpha}{2}}\varrho^{(n)}, (\varrho^{(n)})^-\rangle=-\int_{\mathbb{R}^n}\nabla\cdot((\varrho^{(n)})^+ a_n(x,t))(\varrho^{(n)})^-dx.
\end{equation}

For the term  $ \int_{\mathbb{R}^n} {^c_0D_t^\beta}  \varrho^{(n)}(\varrho^{(n)})^- dx$ (for notational convenience, we use $\varrho$ to represent a general $\varrho^{(n)}$), \eqref{Integrationby} in Lemma \ref{lmm:defconsistency} gives that
\begin{multline}\label{5.7}
\int_{\mathbb{R}^n}(^c_0D_t^\beta\varrho)\varrho^-dx=\frac{1}{\Gamma(1-\beta)}\Big(\int_{\mathbb{R}^n}\frac{-|\varrho^-(t)|^2-\rho_0(x)\varrho^-(x,t)}{t^\beta}dx \\
-\beta\int_{\mathbb{R}^n}\int_0^t\frac{|\varrho^+(x,s)\varrho^-(x,t)|}{(t-s)^{\beta+1}}dsdx
-\beta\int_{\mathbb{R}^n}\int_0^t  \frac{(\varrho^-(x,t)-\varrho^-(x,s))\varrho^-(x,t)}{(t-s)^{\beta+1}}
dsdx\Big) \\
 \le \frac{1}{\Gamma(1-\beta)}
\left(-\frac{\|\varrho^-(x,t)\|_2^2}{2t^{\beta}}
-\beta \int_0^t
\frac{(\varrho^-(x,t)-\varrho^-(x,s))\varrho^-(x,t)\,dx}{(t-s)^{\beta+1}}\,ds\right).
\end{multline}
Here we have used the nonnegativity of $\varrho_0$ and $\varrho^-$. Note that
\[
-(a-b)a\le -\frac{a^2-b^2}{2},
\]
Applying the above inequality to \eqref{5.7}, we have
\begin{equation}\label{5.8}
\int_{\mathbb{R}^n}(^c_0D_t^\beta\varrho)\varrho^-dx
\le
 \frac{1}{\Gamma(1-\beta)}
\left(-\frac{\|\varrho^-\|_2^2}{2t^{\beta}}
-\beta \int_0^t
\frac{\|\varrho^-(t)\|_2^2-\|\varrho^-(s)\|_2^2)}{2(t-s)^{\beta+1}}\,ds\right).
\end{equation}
Note that $\|\varrho_0^-\|_2^2=0$. Applying Equation \eqref{Integrationby} to $\|\varrho^-\|_2^2$ (due to the regularity results \eqref{eq:regularityrhon}), \eqref{5.8} implies that for any $n\geq1$:
\begin{equation}\label{5.9}
\int_{\mathbb{R}^n}(^c_0D_t^\beta\varrho^{(n)})(\varrho^{(n)})^-dx \le -\frac{1}{2}(^c_0D_t^\beta\|(\varrho^{(n)})^-\|_2^2).
\end{equation}

With Corollary \ref{cor:lemfrac}, we have that for any $t>0$
\begin{equation}\label{5.10}
\langle (-\Delta)^{\frac{\alpha}{2}}\varrho^{(n)}, (\varrho^{(n)})^-\rangle\le 0,
\end{equation}
since $(-\Delta)^{\frac{\alpha}{2}}\varrho^{(n)}\in H^{-1,2}(\mathbb{R}^n)$ while $\rho^{n}\in H^{1,2}$ for $t>0$.

Further, since for any $t>0$, $\varrho^{(n)}\in H^{1,2}(\mathbb{R}^n)$, we have $(\varrho^{(n)})^-\in H^{1,2}(\mathbb{R}^n)$ and $\nabla (\varrho^{(n)})^-=-1_{\varrho^{(n)}\le 0}\nabla \varrho^{(n)}$. Then, we obtain that
\begin{equation}\label{5.11}
-\langle \nabla\cdot((\varrho^{(n)})^+a_n(x,t)), (\varrho^{(n)})^-\rangle
=0.
\end{equation}

Combining \eqref{modifiedKS1},  \eqref{5.9}, \eqref{5.10} and \eqref{5.11}, for $t>0$, it holds that
\[
0\le -\frac{1}{2}(^c_0D_t^\beta\|(\varrho^{(n)})^-\|_2^2).
\]
Since $\|(\varrho^{(n)})^-\|_2^2$ is continuous in time, $(\varrho^{(n)})^-=0$ follows from Lemma \ref{lmm:volterra}.

This means that $\varrho^{(n)}$ indeed satisfies the following equation
 \begin{gather}
\varrho^{(n)}(t)=S_{\alpha}^{\beta}(t)\varrho_0^{(n)}-\int_0^T  T_{\alpha}^{\beta}(t-s)\nabla\cdot((\varrho^{(n)}) B_{\frac{1}{n}}(\rho)))\,ds,
\end{gather}
and $\varrho^{(n)}\ge 0$.

Note that $\varrho_0^{(n)}\in L^p(\mathbb{R}^n)$. Since $B_{\frac{1}{n}}(\rho)$ is smooth and bounded, we find that
$\varrho^{(n)}\in C([0,\infty), L^p(\mathbb{R}^n))$. Then, for $t\in [0, T]$,
\[
\begin{split}
&\|\varrho^{(n)}(t)-\rho(t)\|_p\\
\le &\|\varrho_0^{(n)}-\rho_0\|_p+
\int_0^t(t-s)^{-\frac{n\beta}{\alpha}(\frac{1}{q}-\frac{1}{p})
-\frac{\beta}{\alpha}+\beta-1}\|\varrho^{(n)} B_{\frac{1}{n}}(\rho)
-\rho B(\rho)\|_q\,ds\\
\le &\|\varrho_0^{(n)}-\rho_0\|_p+
\int_0^t(t-s)^{-\frac{n\beta}{\alpha}(\frac{1}{q}-\frac{1}{p})
-\frac{\beta}{\alpha}+\beta-1}\|\varrho^{(n)}-\rho\|_p(s) \|B_{\frac{1}{n}}(\rho(s))\|_{\frac{pq}{p-q}}\,ds\\
+ &\int_0^t(t-s)^{-\frac{n\beta}{\alpha}(\frac{1}{q}-\frac{1}{p})
-\frac{\beta}{\alpha}+\beta-1}\|\rho\|_p\|J_{\frac{1}{n}}*B(\rho)-B(\rho)\|_{\frac{pq}{p-q}}\,ds\\
\le & C\int_0^t(t-s)^{-\frac{n\beta}{\alpha}(\frac{1}{q}-\frac{1}{p})
-\frac{\beta}{\alpha}+\beta-1}\|\varrho^{(n)}-\rho\|_p(s)\,ds
+\delta_n(T)
\end{split}
\]
where
\[
\delta_n(T)=\|\varrho_0^{(n)}-\rho_0\|_p+\sup_{0\le t\le T}\int_0^t(t-s)^{-\frac{n\beta}{\alpha}(\frac{1}{q}-\frac{1}{p})
-\frac{\beta}{\alpha}+\beta-1}\|\rho\|_p\|J_{\frac{1}{n}}*B(\rho)-B(\rho)\|_{\frac{pq}{p-q}}\,ds
\] goes to zero as $n\to\infty$.

By the comparison principle (Lemma \ref{lmm:comparison}), we have
\[
\|\varrho^{(n)}(t)-\rho(t)\|_p \le u(t),
\]
where $u(t)$ solves the equation
\[
_0^cD_t^{-\frac{n\beta}{\alpha}(\frac{1}{q}-\frac{1}{p})
-\frac{\beta}{\alpha}+\beta}u_n(t)=Cu_n(t), ~~
u_n(0)=\delta_n(T).
\]
Clearly, as $n\to\infty$, $u_n(t)\to 0$ for $t\in [0, T]$, and then we conclude that
\[
\varrho^{(n)}(t)\to \rho(t),~~\mbox{in}~~C([0, T], L^p(\mathbb{R}^n)).
\]
 Since $\varrho^{(n)}(t)\ge 0$, we then have $\rho(t)\ge 0$ for $t\in [0, T]$. Since $T\in (0, T_b)$ is arbitrary, the claim follows.
\end{proof}

\begin{cor}
In Theorem \ref{thm:LpL1} (or Theorem \ref{thm:L1critical},Theorem \ref{thm:weighted}), besides the conditions listed there, if further we have $\rho_0\ge 0$, then $\rho(x, t)\ge 0$ and the total mass is conserved, that is
\[
M=\int_{\mathbb{R}^n}\rho\,dx=\int_{\mathbb{R}^n}\rho_0\,dx.
\]
\end{cor}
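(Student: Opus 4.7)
The plan is to observe that this corollary is essentially an assembly of two facts already established elsewhere in the paper, so the proof reduces to citing them and checking that the hypotheses of each are simultaneously met.

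First, I would invoke Theorem~\ref{thm:positivity}. In each of the three contexts (Theorem~\ref{thm:LpL1}, Theorem~\ref{thm:L1critical}, Theorem~\ref{thm:weighted}) the hypotheses of Theorem~\ref{thm:positivity} are satisfied the moment we also assume $\rho_0 \ge 0$, so we conclude immediately that $\rho(x,t) \ge 0$ for all $t$ in the interval of existence. Nothing new has to be proved here; I would just note which branch of Theorem~\ref{thm:positivity} applies in each of the three cases.

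Next, for the mass identity, I would appeal to the integral-preservation statements already built into the three theorems: equation~\eqref{4.9} in Theorem~\ref{thm:L1critical}, the corresponding identity $\int \rho(x,t)\,dx = \int \rho_0(x)\,dx$ stated in Theorem~\ref{thm:LpL1}, and the identical statement in Theorem~\ref{thm:weighted} (noting that $L^\infty_{n+\alpha} \subset L^1 \cap L^\infty$ by \eqref{w412}, which places us in the integrability setting). Combining these with the nonnegativity obtained in the previous step yields
\[
M \;=\; \int_{\mathbb{R}^n} \rho(x,t)\,dx \;=\; \int_{\mathbb{R}^n} \rho_0(x)\,dx,
\]
and because $\rho \ge 0$ this $M$ coincides with $\|\rho(\cdot,t)\|_1 = \|\rho_0\|_1$, i.e.\ the total mass is conserved in time.

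There is no genuine obstacle; the only point of care is matching the hypotheses of Theorem~\ref{thm:positivity} with the regularity class in which each existence theorem places the solution. In the weighted case this amounts to using the embedding $L^\infty_{n+\alpha} \hookrightarrow L^1 \cap L^\infty$ to reduce to the $L^1 \cap L^p$ framework where the integral-preservation identity was proved via the approximation argument of Theorem~\ref{thm:LpL1}. Once that is noted, the corollary follows in a few lines.
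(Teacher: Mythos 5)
Your proposal is correct and follows exactly the route the paper intends: the corollary is an immediate combination of Theorem \ref{thm:positivity} (nonnegativity under $\rho_0\ge 0$) with the integral-preservation identities already established inside Theorems \ref{thm:LpL1}, \ref{thm:L1critical} and \ref{thm:weighted}, and nonnegativity turns the preserved integral into conserved mass. The paper offers no separate proof precisely because the argument is this assembly of prior results, so nothing further is needed.
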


%================================================================================================================================
\section{Finite time blow up of solutions }\label{sec:blowup}
In this section, we investigate the blowup behaviors for \eqref{generalized Keller-Segel}. For the usual parabolic-elliptic Keller-Segel equations, a strategy of proof for blowup relies on the second moment method (see, for example, the celebrated work of Nagai \cite{nagai2001}).
As mentioned in Remark \ref{rmk:moment} (see also \cite[Section 2]{BK}), if $\alpha\in (1,2)$, the solution usually does not have moment of order $\nu\ge \alpha$. Hence, the standard technique using second moment does not work for $\alpha\in (1, 2)$.
If we focus on the fundamental solution $P$, a straightforward corollary of Lemma \ref{lmm:asymp} is
\begin{gather}
\int_{\mathbb{R}^n} |x|^{\nu}P\,dx\le Ct^{\frac{\beta\nu}{\alpha}},
\end{gather}
for $\nu\in (1, \alpha)$ if $\alpha\in (1,2)$ or  for $\nu\in (1,2]$ if $\alpha=2$.   Hence moment of order $\nu\in (1, \alpha)$ might work. (Of course for $\alpha=2$, one can consider second moment.)

Using the $\nu$-moment to prove the blowup with Caputo time fractional derivative follows from the similar approach as the proof of \cite[Theorem 2.3]{BK}. For the sake of completeness, we provide a detailed proof here.

Consider the following function which will be used to construct the moment:
\begin{equation}\label{FI}
\varphi(x):=(1+|x|^2)^{\frac{\nu}{2}}-1.
\end{equation}
 The following lemma has been proved in \cite{BK} to justify that $\varphi$ is equivalent to $|x|^\nu$ and has some good properties :
\begin{lem}\cite{BK}\label{lmm:testvarphi}
\begin{enumerate}[(i)]
\item For $\varepsilon>0$, there exists $C(\varepsilon)>0$ such that
\begin{equation}\label{FI1}
\varphi(x)\leq |x|^\nu\leq \varepsilon+C(\varepsilon)\varphi(x)
\end{equation}

\item For $1<\nu<\alpha<2$, or $1<\nu\le\alpha=2$,
\begin{equation}\label{FI2}
(-\triangle)^{\frac{\alpha}{2}}\varphi(x)\in L^\infty(\mathbb{R}^n).
\end{equation}

\item For $1<\nu\leq 2$, there exists $K=K(\nu)>0$ such that the following inequality
\begin{equation}\label{FI3}
\frac{|x-y|^2}{1+|x|^{2-\nu}+|y|^{2-\nu}}\leq\frac{1}{K}\big(\nabla\varphi(x)-\nabla\varphi(y)\big)\cdot(x-y),~\forall x, y\in \mathbb{R}^n.
\end{equation}
\end{enumerate}
\end{lem}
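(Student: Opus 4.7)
\textbf{Plan for Lemma \ref{lmm:testvarphi}.} The three parts are largely independent, and I would address them in order, each requiring a different technique.

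For part (i), I would use elementary inequalities. The lower bound $\varphi(x)\leq |x|^\nu$ follows from $(1+t)^{\nu/2}\leq 1+t^{\nu/2}$ valid for all $t\geq 0$ and $\nu/2\leq 1$, which is a consequence of concavity of $s\mapsto s^{\nu/2}$. For the upper bound, split $\mathbb{R}^n$ at radius $R$: for $|x|>R$ large enough one has $|x|^\nu/\varphi(x)\to 1$, so $|x|^\nu\leq 2\varphi(x)$; on $|x|\leq R$, $|x|^\nu$ is bounded by the constant $R^\nu$. Choosing $R=R(\varepsilon)$ so that the compact-region contribution fits under $\varepsilon$ (and absorbing the rest into a constant $C(\varepsilon)$ multiplying $\varphi$) gives the claim.

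For part (ii), I would split into the two cases. When $\alpha=2$, direct differentiation yields
\begin{equation*}
\Delta\varphi(x)=n\nu(1+|x|^2)^{\nu/2-1}+\nu(\nu-2)(1+|x|^2)^{\nu/2-2}|x|^2,
\end{equation*}
which is continuous and decays like $|x|^{\nu-2}$ at infinity, hence in $L^{\infty}$. When $1<\alpha<2$, I would use the singular integral representation $(-\Delta)^{\alpha/2}\varphi(x)=c_{n,\alpha}\,\mathrm{P.V.}\int\frac{\varphi(x)-\varphi(y)}{|x-y|^{n+\alpha}}\,dy$ and split into $|x-y|\leq 1$ and $|x-y|>1$. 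The near-diagonal piece is controlled by a second-order Taylor expansion (the antisymmetric first-order term integrates to zero, and the boundedness of $D^2\varphi$ makes the remainder absolutely integrable against $|x-y|^{-n-\alpha+2}$). The far-field piece uses the mean-value estimate $|\varphi(x)-\varphi(y)|\lesssim |x-y|(1+|x|+|y|)^{\nu-1}$ together with $\nu<\alpha$ to get absolute convergence uniformly in $x$.

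For part (iii), compute $\nabla\varphi(x)=\nu(1+|x|^2)^{\nu/2-1}x$ and apply the fundamental theorem of calculus along the segment $z_t=y+t(x-y)$:
\begin{equation*}
(\nabla\varphi(x)-\nabla\varphi(y))\cdot(x-y)=\int_0^1 (x-y)^{T}D^2\varphi(z_t)(x-y)\,dt.
\end{equation*}
Since $D^2\varphi(z)=\nu(1+|z|^2)^{\nu/2-1}I+\nu(\nu-2)(1+|z|^2)^{\nu/2-2}z\otimes z$ and $\nu-2\leq 0$, Cauchy--Schwarz $(v\cdot z)^2\leq |v|^2|z|^2$ gives
\begin{equation*}
v^{T}D^2\varphi(z)v\geq \nu(1+|z|^2)^{\nu/2-2}\bigl[1+(\nu-1)|z|^2\bigr]|v|^2\geq \nu(\nu-1)(1+|z|^2)^{\nu/2-1}|v|^2,
\end{equation*}
where the last step uses $1+(\nu-1)|z|^2\geq (\nu-1)(1+|z|^2)$ for $\nu\in(1,2]$. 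Then $(1+|z|^2)^{\nu/2-1}\gtrsim (1+|z|^{2-\nu})^{-1}$, and subadditivity $|a+b|^{2-\nu}\leq |a|^{2-\nu}+|b|^{2-\nu}$ (valid since $2-\nu\in[0,1)$) applied to $z_t=(1-t)y+tx$ yields $(1+|z_t|^{2-\nu})^{-1}\gtrsim (1+|x|^{2-\nu}+|y|^{2-\nu})^{-1}$ uniformly in $t\in[0,1]$, from which the stated inequality follows after integration in $t$.

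The main obstacle is in part (ii) for $\alpha\in(1,2)$: one has to simultaneously control the principal-value singularity near the diagonal and the polynomial growth at infinity, uniformly in $x$. The key is that boundedness of $D^2\varphi$ turns the near-diagonal P.V.\ into an absolutely convergent integral once the antisymmetric first-order Taylor term is cancelled, while the hypothesis $\nu<\alpha$ is precisely what makes the tail integral converge. A subtler point in part (iii) is that $\nu>1$ enters exactly to ensure the coefficient $\nu(\nu-1)$ in the quadratic-form lower bound is strictly positive.
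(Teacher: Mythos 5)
The paper does not actually prove this lemma — it is quoted from Biler--Karch \cite{BK} — so your argument is being checked against the standard one. Parts (i) and (iii) are essentially correct. In (i) the only blemish is that a single radius $R$ cannot simultaneously be ``large enough'' that $|x|^\nu\le 2\varphi(x)$ outside $B_R$ and small enough that $R^\nu\le\varepsilon$ inside; the clean version takes $R=\varepsilon^{1/\nu}$ small and observes that $|x|^\nu/\varphi(x)$ is continuous on $\{|x|\ge R\}$ and tends to $1$ at infinity, hence bounded there. Part (iii) is correct as written, with $K=\nu(\nu-1)$, and you correctly identify $\nu>1$ as the hypothesis that makes this constant positive.

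Part (ii) for $1<\alpha<2$ has a genuine gap: your far-field bound is not uniform in $x$. Bounding the integrand by its absolute value gives
\[
\int_{|x-y|>1}\frac{|x-y|\,(1+|x|+|y|)^{\nu-1}}{|x-y|^{n+\alpha}}\,dy
\lesssim (1+|x|)^{\nu-1}\int_{|z|>1}|z|^{1-n-\alpha}\,dz+\int_{|z|>1}|z|^{\nu-n-\alpha}\,dz
\lesssim (1+|x|)^{\nu-1},
\]
which converges for each fixed $x$ but grows like $|x|^{\nu-1}$, and since $\nu>1$ this does not yield $(-\Delta)^{\frac{\alpha}{2}}\varphi\in L^\infty(\mathbb{R}^n)$. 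The growth is not an artifact of a crude estimate: for $|x|=R$ large and $y$ in the unit annulus $1<|x-y|<2$ one has $|\varphi(x)-\varphi(y)|\approx R^{\nu-1}$, so $\int_{|x-y|>1}|\varphi(x)-\varphi(y)|\,|x-y|^{-n-\alpha}\,dy\gtrsim R^{\nu-1}$; no absolute-value bound over a fixed far region can be uniform. The repair is to retain the second-order cancellation out to a radius comparable with $|x|$: write the operator with the symmetric difference $\varphi(x+z)+\varphi(x-z)-2\varphi(x)$ and split at $|z|=(1+|x|)/2$. For $|z|\le(1+|x|)/2$ the segment $[x-z,x+z]$ stays in the region $|w|\gtrsim |x|$, where $\|D^2\varphi(w)\|\lesssim(1+|x|)^{\nu-2}$, giving a contribution $\lesssim(1+|x|)^{\nu-2}(1+|x|)^{2-\alpha}=(1+|x|)^{\nu-\alpha}$; for $|z|>(1+|x|)/2$ one uses $|\varphi(x\pm z)|+|\varphi(x)|\lesssim|z|^\nu$ and $\int_{|z|>(1+|x|)/2}|z|^{\nu-n-\alpha}\,dz\lesssim(1+|x|)^{\nu-\alpha}$. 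Both pieces are bounded (indeed decay) precisely because $\nu<\alpha$. As written, your fixed-radius splitting proves only that the principal value exists pointwise with a bound $O(|x|^{\nu-1})$.
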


In order to study the finite time blowup of solutions to equation \eqref{generalized Keller-Segel}, we need some auxiliary results. We then have the following claim
\begin{prop}\label{pro:moments}
Let $\rho$ be the mild solutions in Theorem \ref{thm:LpL1} (or Theorem \ref{thm:L1critical}, or Theorem \ref{thm:weighted}). Let $\nu\in (1, \alpha)$ if $\alpha\in (1,2)$ and $\nu\in (1,\alpha]$ if $\alpha=2$. In addition, if $\rho_0\ge 0$ and $\rho_0\in L^1(\mathbb{R}^n, (1+|x|^2)^{\nu/2}dx)$, then in the interval of existence of the mild solutions,
\[
\omega(t):=\int_{\mathbb{R}^n}\varphi(x)\rho(x,t)\,dx=\int_{\mathbb{R}^n}((1+|x|^2)^{\frac{\nu}{2}}-1)\rho(x,t)\,dx<\infty.
\]
Further, $\omega(t)$ is continuous and satisfies the equation
\begin{eqnarray}
\begin{aligned}\label{6.1}
_0^cD_t^{\beta}\omega(t)&=-\int_{\mathbb{R}^n}\rho(x,s)(-\Delta)^{\frac{\alpha}{2}}\varphi\,dx\\
&-\frac{s_{n,\gamma}}{2}\iint_{\mathbb{R}^n\times\mathbb{R}^n}
\frac{\rho(x,s)\rho(y,s)}{|x-y|^{n-\gamma+2}}(\nabla\varphi(x)-\nabla\varphi(y))\cdot(x-y)\,dxdy.
\end{aligned}
\end{eqnarray}
\end{prop}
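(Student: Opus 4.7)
The plan is to test the mild formulation \eqref{eq:mildreformulation} against $\varphi$ to obtain the Volterra identity $\omega(t)-\omega(0)=g_\beta\!\ast\! F(t)$ with $F$ equal to the right-hand side of \eqref{6.1}, and then invoke Lemma~\ref{lmm:volterra}(ii) to conclude $_0^cD_t^\beta\omega=F$. Decompose $\rho(t)=S_\alpha^\beta(t)\rho_0+v(t)$, where $v(t)=-\int_0^t\nabla\!\cdot\!(T_\alpha^\beta(t-s)(\rho B(\rho))(s))\,ds$. A Laplace-transform computation giving $\widehat{T_\alpha^\beta}(z)=(z^\beta+A)^{-1}$, with $A=(-\Delta)^{\alpha/2}$, identifies $v$ as the Volterra-form solution of $_0^cD_t^\beta v+Av=-\nabla\!\cdot\!(\rho B(\rho))$ with $v(0)=0$; combined with the analogous Volterra identity for the homogeneous term $S_\alpha^\beta(\cdot)\rho_0$ (satisfying $_0^cD_t^\beta w+Aw=0$), one obtains
\begin{equation*}
\rho(t)-\rho_0=-\frac{1}{\Gamma(\beta)}\int_0^t(t-s)^{\beta-1}\bigl[A\rho(s)+\nabla\!\cdot\!(\rho B(\rho))(s)\bigr]\,ds
\end{equation*}
distributionally. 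Pairing against $\varphi$, transferring $A$ onto $\varphi$ by self-adjointness, integrating by parts on the divergence, and performing the $(x\leftrightarrow y)$ symmetrization on $\int\nabla\varphi\cdot\rho B(\rho)\,dx$ using the explicit kernel \eqref{eq:B} produces exactly $F(t)$.

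Since $\varphi$ grows like $|x|^\nu$ at infinity and is not admissible as a test function, every pairing above must first be performed against $\varphi_R:=\varphi\chi_R\in C_c^\infty(\mathbb{R}^n)$, where $\chi_R(x)=\chi(x/R)$ for a fixed $\chi\in C_c^\infty$ with $\chi\equiv 1$ on $B_1$, and then one passes $R\to\infty$. Denote the corresponding truncated moment and forcing by $\omega_R$ and $F_R$. The crucial observation is that $\rho\ge 0$ by Theorem~\ref{thm:positivity} and the symmetric integrand in \eqref{6.1} is non-negative by Lemma~\ref{lmm:testvarphi}(iii), so the nonlinear contribution to $F_R$ is non-positive and
\begin{equation*}
F_R(s)\le\|(-\Delta)^{\alpha/2}\varphi_R\|_\infty\,\|\rho(s)\|_1=\|(-\Delta)^{\alpha/2}\varphi_R\|_\infty\,\|\rho_0\|_1
\end{equation*}
by mass conservation. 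If $\|(-\Delta)^{\alpha/2}\varphi_R\|_\infty\le C$ uniformly in $R$, then the Volterra bound for $\omega_R$ gives $\omega_R(t)\le\omega_R(0)+C\|\rho_0\|_1 t^\beta$; Fatou's lemma in $R$ delivers $\omega(t)<\infty$, and monotone/dominated convergence transfers both the integral identity and its continuity in $t$ from $\omega_R$ to $\omega$.

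The main obstacle is the uniform commutator bound $\|(-\Delta)^{\alpha/2}\varphi_R\|_\infty\le C$, because the cutoff $\chi_R$ does not commute with the nonlocal operator. Writing $(-\Delta)^{\alpha/2}\varphi_R=\chi_R(-\Delta)^{\alpha/2}\varphi+[(-\Delta)^{\alpha/2},\chi_R]\varphi$, the first summand is uniformly bounded by Lemma~\ref{lmm:testvarphi}(ii), and the commutator is controlled by a direct singular-integral estimate that exploits the smoothness of $\chi_R$ (whose derivatives of order $k$ decay like $R^{-k}$) and the polynomial growth of $\varphi$; the $R$-dependence vanishes on any compact set where the bound is needed. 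The second obstacle is the passage $R\to\infty$ in the symmetric double integral of \eqref{6.1}, which is resolved by monotone convergence since the integrand is non-negative by Lemma~\ref{lmm:testvarphi}(iii) and its $R\to\infty$ limit is a priori controlled by the uniform bound on $\omega_R$. Time continuity of $\omega$ finally follows by running the same estimates on $\omega(t+h)-\omega(t)$ and invoking the $L^r$-time continuity of $P,Y,\nabla Y$ from Proposition~\ref{pro:timecontinuity}, together with the $L^1\cap L^p$ (respectively weighted) regularity of $\rho$ furnished by Theorem~\ref{thm:LpL1} or Theorem~\ref{thm:weighted}.
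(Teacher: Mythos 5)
Your overall strategy (pass from the mild formulation to a Volterra identity $\omega(t)-\omega(0)=g_\beta * F(t)$, test against cutoffs of $\varphi$, and pass to the limit) is the same as the paper's, but there is a genuine gap in how you treat the nonlinear term under truncation. You claim that after testing against $\varphi_R=\varphi\chi_R$ the symmetrized double integral is still signed, ``the symmetric integrand in \eqref{6.1} is non-negative by Lemma \ref{lmm:testvarphi}(iii), so the nonlinear contribution to $F_R$ is non-positive,'' and later you invoke monotone convergence in $R$ for the same reason. But Lemma \ref{lmm:testvarphi}(iii) applies to $\varphi$, not to $\varphi_R$: multiplying by the cutoff destroys the monotonicity of the gradient, and in the transition region $|x|\sim R$ the quantity $(\nabla\varphi_R(x)-\nabla\varphi_R(y))\cdot(x-y)$ can be negative. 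So neither the one-sided bound on $F_R$ nor the monotone-convergence passage $R\to\infty$ in the double integral is justified as written. The correct route — and the one the paper takes — is the uniform Hessian bound $\sup_R\|\nabla^2\varphi_R\|_\infty<\infty$ (cf.\ \eqref{eq:boundHess}), which gives $|(\nabla\varphi_R(x)-\nabla\varphi_R(y))\cdot(x-y)|\le C|x-y|^2$ uniformly in $R$, combined with the Hardy--Littlewood--Sobolev bound $\iint\rho(x)\rho(y)|x-y|^{-(n-\gamma)}\,dxdy\le C\|\rho\|_{2n/(n+\gamma)}^2$ (Corollary \ref{cor:hls}, using $\tfrac{2n}{n+\gamma}\in[1,p_c]$). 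This yields absolute integrability of the limiting double integral and lets you pass $R\to\infty$ by dominated convergence; the sign of the integrand is then only needed for $\varphi$ itself, in the blowup argument, not here. Monotone convergence is reserved (as in the paper) for the left-hand side $\langle\rho,\varphi_R\rangle\uparrow\omega(t)$, where it is legitimate because $\rho\ge0$ and $\varphi_R\uparrow\varphi$.

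A secondary weakness: you derive the distributional identity $\rho(t)-\rho_0=-g_\beta*[A\rho+\nabla\cdot(\rho B(\rho))]$ directly from the mild formulation by ``a Laplace-transform computation.'' For the mild solution itself this is only formal; the paper justifies the weak formulation \eqref{6.6} by first mollifying the drift ($B_{1/n}(\rho)$), working with the regularized solutions $\rho^n$ for which the equation holds strongly in $C([0,T];H^{-\alpha,p}(\mathbb{R}^n))$, and then passing $n\to\infty$ using $\rho^n\to\rho$ in $C([0,T];L^p(\mathbb{R}^n))$. This regularization also supplies $\rho^n\ge0$ and mass conservation, which you use implicitly. Your uniform bound on $\|(-\Delta)^{\alpha/2}\varphi_R\|_\infty$ is plausible and roughly at the same level of rigor as the paper's assertion that $J_{1,m}\to I_1$, though the phrase ``the $R$-dependence vanishes on any compact set where the bound is needed'' is not meaningful for a global sup-norm bound; the actual estimate uses $\nu<\alpha$ to control the tail $\int_{|y-x|\gtrsim R}|\varphi_R(x)-\varphi_R(y)||x-y|^{-n-\alpha}\,dy\lesssim R^{\nu-\alpha}$.
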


\begin{proof}
Below, we provide a uniform proof for all the conditions in the theorems (although the first part  of the claims is trivial for Theorem \ref{thm:weighted}. ).

Let $T_b$ be the largest time of existence. We fix $T\in (0, T_b)$ and let $\rho$ be the mild solution on $[0, T]$. Define $\rho(t)=\rho(T)$ for $t\ge T$. We consider the following regularized system
\begin{gather}\label{eq:rhonreg}
 \rho^{n}(t)=S_{\alpha}^{\beta}(t)\rho_0-\int_0^t  T_{\alpha}^{\beta}(t-s)\nabla\cdot(\rho^{n} B_{\frac{1}{n}}(\rho))\,ds,
\end{gather}
where $B_{\frac{1}{n}}(\rho)=J_{\frac{1}{n}}*B(\rho)$ with $J_{\epsilon}=\frac{1}{\epsilon^n}J(\frac{x}{\epsilon})$.
Repeat all the arguments as in the proof of Theorem \ref{thm:positivity}, we have
\[
\rho^n\ge 0, ~~\int_{\mathbb{R}^n} \rho^n(x,t)\,dx=\int_{\mathbb{R}^n}\rho_0\,dx=:M.
\]
More importantly, by taking the difference directly, we find for some $p\ge p_c$ (depending on which theorems we are considering)
\[
\rho^n\to \rho~~\mbox{in}~~C([0, T]; L^p(\mathbb{R}^n)),~~\mbox{as}~~ n\rightarrow \infty,
\]

By equation \eqref{eq:rhonreg}, we find that in $C([0, T]; H^{-\alpha,p}(\mathbb{R}^n))$, the following equality holds in strong sense:
\begin{equation}\label{6.2}
_0^cD_t^{\beta}\rho^n=-(-\Delta)^{\frac{\alpha}{2}}\rho^n-\nabla\cdot(\rho^n B_{\frac{1}{n}}(\rho)).
\end{equation}
Testing \eqref{6.2} using $\psi\in C_c^{\infty}(\mathbb{R}^n)$, we have
\begin{gather}\label{eq:weakrhon}
_0^cD_t^{\beta}\langle \rho^n, \psi\rangle=-\int_{\mathbb{R}^n} \rho^n (-\Delta)^{\frac{\alpha}{2}}\psi\,dx+\int_{\mathbb{R}^n}\rho^n B_{\frac{1}{n}}(\rho)\cdot\nabla\psi \,dx.
\end{gather}

Now, we take $n\to\infty$ in \eqref{eq:weakrhon} which is valid since $\psi\in C_c^{\infty}(\mathbb{R}^n)$ and $\rho^n\to \rho$ in $C([0, T], L^p(\mathbb{R}^n))$, and have:
\begin{gather}\label{6.6}
_0^cD_t^{\beta}\langle \rho, \psi\rangle=-\int_{\mathbb{R}^n} \rho (-\Delta)^{\frac{\alpha}{2}}\psi\,dx+\int_{\mathbb{R}^n}\rho B(\rho)\cdot\nabla\psi \,dx.
\end{gather}
Then, \eqref{6.6} gives that
\begin{equation}\label{6.7}
\langle \rho,\psi\rangle(t)=\langle \rho,\psi\rangle(0)+\frac{1}{\Gamma(\beta)}\int_0^t(t-s)^{\beta-1}(J_1(s)+J_2(s))ds,
\end{equation}
where
\[
\begin{split}
&J_1(s)=-\int_{\mathbb{R}^n}\rho(x,s)(-\Delta)^{\frac{\alpha}{2}}\psi\,dx,\\
&J_2(s)=\int_{\mathbb{R}^n}\rho B(\rho)\cdot\nabla\psi \,dx\\
&=-\frac{s_{n,\gamma}}{2}\iint_{\mathbb{R}^n\times\mathbb{R}^n}
\frac{\rho(x,s)\rho(y,s)}{|x-y|^{n-\gamma+2}}(\nabla\psi(x)-\nabla\psi(y))\cdot(x-y)\,dxdy.
\end{split}
\]

Choose $\zeta\in C_c^{\infty}(\mathbb{R}^n)$ and $0\le \zeta\le 1$, that is, for $|x|\le 1$, $\zeta=1$, for $|x|\ge 2$, $\zeta=0$ .  Now we take $\psi_m=\varphi\zeta_m,$  where $\zeta_m=\zeta(\frac{x}{m})$. The corresponding $J_1$ and $J_2$ will be denoted by $J_{1,m}$ and $J_{2,m}$.
Direct computation verifies that $\|\nabla^2\varphi\|_{\infty}<\infty$, and consequently
\begin{gather}\label{eq:boundHess}
\begin{split}
\sup_{m\ge 1}\|\nabla^2\psi_m\|_{\infty}&=\sup_{m\ge 1}\left(\|\nabla^2\varphi\|_{\infty}\|\zeta\|_{\infty}
+2\frac{1}{m}\sup_{m\le |x|\le 2 m}|\nabla\varphi|\|\nabla\zeta\|_{\infty}+\frac{1}{m^2}\|\varphi \nabla^2\zeta\|_{\infty}\right)\\
&<\infty.
\end{split}
\end{gather}
Note that for $|x|\in [m, 2m]$, $\nabla\varphi\sim (1+|x|)^{\nu-1}\le Cm^{\nu-1}$. By Corollary \ref{cor:hls},
\begin{gather}\label{eq:ineqfordoubleint}
\iint_{\mathbb{R}^n\times\mathbb{R}^n}
\frac{\rho(x,s)\rho(y,s)}{|x-y|^{n-\gamma}}\, dxdy \le C\|\rho\|_{\frac{2n}{n+\gamma}}^2(s)<\infty,
\end{gather}
where we have used the fact that $\frac{2n}{n+\gamma}\in [1, p_c]$.

Using \eqref{eq:boundHess} and \eqref{eq:ineqfordoubleint}, we find
\[
|J_{2,m}|\le C\|\rho\|_{\frac{2n}{n+\gamma}}^2(s)<\infty
\]
and similarly
\[
I_2:=-\frac{s_{n,\gamma}}{2}\iint_{\mathbb{R}^n\times\mathbb{R}^n}
\frac{\rho(x,s)\rho(y,s)}{|x-y|^{n-\gamma+2}}(\nabla\varphi(x)-\nabla\varphi(y))\cdot(x-y)\,dxdy<\infty.
\]
Consequently, we find that as $m\to\infty$, uniformly on $[0, T]$:
\[
|I_2-J_{2,m}|\le C\iint_{\mathbb{R}^n\times\mathbb{R}^n\setminus B(0,m)\times B(0, m)}\frac{\rho(x,s)\rho(y,s)}{|x-y|^{n-\gamma}}\, dxdy\to 0.
\]

It is also clear that uniformly for $s\in [0, T]$,
\[
J_{1,m}\to I_1:=-\int_{\mathbb{R}^n}\rho(x,s)(-\Delta)^{\frac{\alpha}{2}}\varphi\,ds.
\]

Replacing $\psi$ with $\psi_m$, $J_{i}$ with $J_{i,m}$ ($i=1,2$) in  \eqref{6.7} and taking $m\to \infty$, we obtain on $[0, T]$:
\[
\lim_{m\to\infty}\langle \rho,\psi_m\rangle=\omega(0)+\frac{1}{\Gamma(\beta)}\int_0^t(t-s)^{\beta-1}(I_1(s)+I_2(s))\,ds,
\]
which is continuous and finite. However, for left hand side, we apply monotone convergence theorem and find that the limit must be $\omega(t)$. This is implies that $\omega(t)$ is continuous on $[0, T]$ and the equation \eqref{6.1} is valid.
Since $T\in (0, T_b)$ is arbitrary, the claim follows.
\end{proof}

\begin{thm}\label{thm:blowup}
Assume $n\ge 2$, $0<\beta<1, 1<\alpha\le 2, 1<\gamma\leq n$. Assume $\rho_0\ge 0$ and also the conditions in Theorem \ref{thm:LpL1} (or Theorem \ref{thm:weighted}) hold to ensure the existence of mild solutions. If one of the following conditions are satisfied,
\begin{enumerate}[(i)]
\item For $\alpha=2, \gamma=n,\nu=2$, if $\rho_0\in L^1(\mathbb{R}^n, (1+|x|^{\nu})dx)$ so that
\[
\|\rho_0\|_1>\frac{2n}{s_{n,\gamma}}.
\]

\item For some $\nu\in (1, \alpha)$ when $\alpha<2$  or $\nu\in (1, 2]$ when $\alpha=2$, with $\frac{n-\gamma+2}{\nu}>1$, if $\rho_0\in L^1(\mathbb{R}^n, (1+|x|^{\nu})dx)$ and
\[
\|\rho_0\|_1>M^*,~\int_{\mathbb{R}^n} |x|^{\nu}\rho_0(x)\,dx<\delta,
\]
for certain constants $M^*(\nu,n,\alpha,\gamma)$ and $\delta(\nu,n, \alpha,\gamma)$.

\item Suppose $\alpha+\gamma<n+2$ (or $p_c>1$) and for some $\nu\in (1, \alpha)$ when $\alpha<2$  or $\nu\in (1, 2]$ when $\alpha=2$. If $\rho_0\in L^1(\mathbb{R}^n, (1+|x|^{\nu})dx)$ satisfying
\begin{equation}\label{BlowupC}
\frac{\int_{\mathbb{R}^n}|x|^\nu \rho_0(x)dx}{\int_{\mathbb{R}^n}\rho_0(x)dx}\leq \chi\left(\int_{\mathbb{R}^n}\rho_0(x)dx\right)^{\frac{\nu}{n+2-\alpha-\gamma}},
\end{equation}
where $\chi=\delta (M^*)^{-1+\frac{\nu}{\alpha+\gamma-2-n}}$ ($M^*$, $\delta$ are the constants in (ii)).
\end{enumerate}
then the mild solution of \eqref{generalized Keller-Segel} will blow up in a finite time.
\end{thm}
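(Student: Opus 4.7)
The plan is to track the $\nu$-moment
\[
\omega(t):=\int_{\mathbb{R}^n}\varphi(x)\rho(x,t)\,dx,\qquad \varphi(x)=(1+|x|^2)^{\nu/2}-1,
\]
and derive from Proposition~\ref{pro:moments} a fractional differential inequality that is incompatible with $\omega\ge 0$. Because $\rho_0\ge 0$ implies $\rho\ge 0$ (Theorem~\ref{thm:positivity}) and the mass $M:=\|\rho_0\|_1$ is conserved, Proposition~\ref{pro:moments} gives continuity of $\omega$ and the identity \eqref{6.1}. The first term in \eqref{6.1} is bounded by $C_1 M$ using $(-\Delta)^{\alpha/2}\varphi\in L^\infty$ from Lemma~\ref{lmm:testvarphi}(ii). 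Applying Lemma~\ref{lmm:testvarphi}(iii) to the second term produces the common starting inequality
\[
{}^c_0D_t^\beta\omega(t)\le C_1 M-\tfrac{K s_{n,\gamma}}{2}\iint\frac{\rho(x,t)\rho(y,t)}{|x-y|^{n-\gamma}\bigl(1+|x|^{2-\nu}+|y|^{2-\nu}\bigr)}\,dx\,dy.
\]

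For case (i), $\varphi(x)=|x|^2$ yields the exact identities $(-\Delta)\varphi\equiv-2n$ and $(\nabla\varphi(x)-\nabla\varphi(y))\cdot(x-y)=2|x-y|^2$, and $\gamma=n$ makes $|x-y|^{n-\gamma+2}=|x-y|^2$; substituting into \eqref{6.1} gives the clean equality ${}^c_0D_t^\beta\omega(t)=2nM-s_{n,\gamma}M^2$. When $M>2n/s_{n,\gamma}$ the right side is a negative constant, so Corollary~\ref{cor:gronwall} yields $\omega(t)\le\omega(0)+(2nM-s_{n,\gamma}M^2)t^\beta/\Gamma(1+\beta)$, which becomes negative in finite time---impossible since $\varphi\ge 0$. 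The mild solution must therefore break down before that time.

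For case (ii), I would estimate the dissipation integral from below by restricting to $\{|x|,|y|\le R\}$, using the crude bounds $|x-y|\le 2R$ and $1+|x|^{2-\nu}+|y|^{2-\nu}\le 3\max(1,R^{2-\nu})$ together with Chebyshev's inequality $\int_{|x|>R}\rho\le\omega/R^\nu$, so that $\iint_{|x|,|y|\le R}\rho(x)\rho(y)\,dxdy\ge(M-\omega/R^\nu)_+^2$. Picking $R=(2\omega/M)^{1/\nu}$ and invoking the hypothesis $\frac{n-\gamma+2}{\nu}>1$ to select the dominant exponent gives a differential inequality of the form
\[
{}^c_0D_t^\beta\omega(t)\le C_1M-c_2 M^{p_1}\omega(t)^{-p_2},
\]
with positive $p_1,p_2$ depending on $n,\gamma,\nu$. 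Setting $\omega^{\ast}=2\omega(0)$ and choosing $M^{\ast}$, $\delta$ so that $M>M^{\ast}$ and $\omega(0)<\delta$ force the right-hand side to be $\le-c<0$ whenever $\omega\le\omega^{\ast}$, I would then bootstrap: if $t_0$ were the first time with $\omega(t_0)=\omega^{\ast}$, Corollary~\ref{cor:gronwall} applied on $[0,t_0]$ would yield $\omega(t_0)\le\omega(0)-ct_0^\beta/\Gamma(1+\beta)<\omega(0)<\omega^{\ast}$, a contradiction. Hence $\omega<\omega^{\ast}$ throughout the existence interval, so ${}^c_0D_t^\beta\omega\le-c$ globally and $\omega$ is driven below zero in finite time, forcing blow-up.

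For case (iii), I would exploit the scaling $\rho^\lambda(x,t):=\lambda^{\alpha+\gamma-2}\rho(\lambda x,\lambda^{\alpha/\beta}t)$, which maps mild solutions of \eqref{generalized Keller-Segel} to mild solutions with new mass $\lambda^{-a}M$ and new moment $\lambda^{-a-\nu}\omega(0)$, where $a:=n+2-\alpha-\gamma>0$ thanks to $p_c>1$. A direct elimination shows that there exists $\lambda>0$ with $\lambda^{-a}M>M^{\ast}$ and $\lambda^{-a-\nu}\omega(0)<\delta$ simultaneously exactly when $\omega(0)/M\le\chi M^{\nu/a}$ with $\chi=\delta(M^{\ast})^{-1-\nu/a}$, which (using $a=-(\alpha+\gamma-2-n)$) is the condition \eqref{BlowupC}. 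Case (ii) then applies to $\rho^\lambda_0$ to conclude that $\rho^\lambda$ blows up in finite time, and the scaling transfers this back to $\rho$. The principal obstacle I anticipate is the bootstrap step in case (ii): the super-linear dependence on $\omega^{-1}$ and the nonlocal-in-time nature of ${}^c_0D_t^\beta$ preclude a naive classical first-time-crossing argument, and one must instead route the reasoning through the fractional comparison principle (Lemma~\ref{lmm:comparison}) and Corollary~\ref{cor:gronwall}, while simultaneously pinning down constants $M^{\ast}$, $\delta$ in terms of $n,\alpha,\beta,\gamma,\nu$ so that the negative constant $-c$ really kicks in.
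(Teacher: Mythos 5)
Your proposal is correct and follows the paper's overall strategy exactly: the $\nu$-moment $\omega(t)=\int\varphi\rho\,dx$ via Proposition \ref{pro:moments}, the bound $I_1\le C_0M$ from Lemma \ref{lmm:testvarphi}(ii), the dissipation bound $I_2\le -\tfrac{1}{2}s_{n,\gamma}K J(t)$ from Lemma \ref{lmm:testvarphi}(iii), the exact identity ${}^c_0D_t^\beta\omega=2nM-s_{n,\gamma}M^2$ in case (i), the fractional Gronwall conclusion via Corollary \ref{cor:gronwall}, and the scaling reduction of (iii) to (ii) with the same elimination producing $\chi=\delta(M^*)^{-1+\nu/(\alpha+\gamma-2-n)}$. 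The one genuine divergence is the central step of case (ii): where the paper obtains $J(t)\ge C^{-1}M^{2p}(M^2+2M\omega)^{-(p-1)}$ (with $p=\tfrac{n-\gamma+2}{\nu}$) by a reverse-H\"older interpolation --- writing $M^2=\iint\rho\rho$ with the weight $|x-y|^{s}(1+|x|^{2-\nu}+|y|^{2-\nu})^{\delta}$ split between the two H\"older factors and then absorbing the second factor into $M^2+2M\omega$ via \eqref{FI1} --- you instead localize the mass to $\{|x|,|y|\le R\}$ with a Chebyshev tail bound and optimize $R\sim(\omega/M)^{1/\nu}$. Both yield the same differential inequality up to constants, and your truncation argument is arguably more elementary; the small costs are that Chebyshev must be run with $\varphi$ (or with $|x|^\nu\le\varepsilon+C(\varepsilon)\varphi$ from \eqref{FI1}) rather than $|x|^\nu$ directly, and that the regimes $R\lessgtr 1$ must be split since $\max(1,R^{2-\nu})$ changes form --- both of which you flag implicitly. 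Your first-crossing bootstrap at $\omega^*=2\omega(0)$ is also a legitimate (and slightly more explicit) packaging of the paper's monotonicity-plus-comparison step; as you note, the pointwise inequality ${}^c_0D_t^\beta\omega(t)\le f(\omega(t))$ with $f$ increasing is exactly what Lemma \ref{lmm:comparison} and Corollary \ref{cor:gronwall} require, so the nonlocality of the Caputo derivative causes no trouble. Finally, in (iii) the elimination gives existence of $\lambda$ under strict inequality; the boundary case of \eqref{BlowupC} requires the limiting choice $M_0\to M^*$ as in the paper, a detail worth one line.
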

\begin{proof}
Recall the function
\[
\omega(t)=\int_{\mathbb{R}^n}\varphi(x)\rho(x,t)dx.
\]
Proposition \ref{pro:moments} implies that on the existence of interval $[0, T]$, we have $\omega(t)\in [0,\infty)$ and
\begin{eqnarray}
\begin{aligned}\label{estimateofM2}
^c_0D_t^\beta \omega(t)&=-\int_{\mathbb{R}^n}\Big((-\triangle)^{\frac{\alpha}{2}}\varphi(x)\Big)\rho(x,t) dx\\
&-\frac{s_{n,\gamma}}{2}\int_{\mathbb{R}^n}\int_{\mathbb{R}^n}(\nabla\varphi(x)-\nabla\varphi(y))\cdot(x-y)\frac{\rho(x,t)\rho(y,t)}{|x-y|^{n-\gamma+2}}dxdy\\
&=I_1+I_2.
\end{aligned}
\end{eqnarray}
In the existence of interval, $M:=\int_{\mathbb{R}^n}\rho(x)dx=\int_{\mathbb{R}^n}\rho_0(x)dx>0$. This implies that $\omega(t)>0$. Hence, as long as the solution does not blow up, $\omega(t)\in (0,\infty)$.

(i). In the case that $\alpha=2, \gamma=n,\nu=2$, we find that
\[
I_1+I_2=2nM-s_{n,\gamma}M^2.
\]
Hence, if $M>\frac{2n}{s_{n,\gamma}}$, $w(t)$ will be zero in finite time. This means that the solution will blow up in finite time.

(ii). Lemma \ref{lmm:testvarphi} shows that there exists constant $C_0$ such that
\[
I_1\le C_0M.
\]
Now, consider $I_2$, by Lemma \ref{lmm:testvarphi} (Equation \eqref{FI3}), we have that
\begin{eqnarray}
\begin{aligned}\label{Jestimate}
I_2&\le -\frac{1}{2}s_{n,\gamma}K \int_{\mathbb{R}^n}\int_{\mathbb{R}^n}\frac{\rho(x,t)\rho(y,t)}{|x-y|^{n-\gamma}(1+|x|^{2-\nu}+|y|^{2-\nu})}dxdy\\
&=:-\frac{1}{2}s_{n,\gamma}KJ(t).
\end{aligned}
\end{eqnarray}
Choose $p\in (1,\infty)$, $\delta\in (0,1)$ and $s\ge 0$ such that
\begin{equation}\label{Equality}
p=\frac{1}{\delta}, s=(n-\gamma)\delta, sp'+(2-\nu)\delta p'=\nu,
\end{equation}
where $p'=\frac{p}{p-1}$. Indeed, \eqref{Equality} implies that
\begin{gather}
p=\frac{n-\gamma+2}{\nu}>1,~
s=\frac{(n-\gamma)\nu}{n-\gamma+2}\ge 0.
\end{gather}
By H\"older's inequality, we then have:
\begin{eqnarray}
\begin{aligned}\label{EstiamteofM}
&M^2=\int_{\mathbb{R}^n}\int_{\mathbb{R}^n} \rho(x,t)\rho(y,t)dxdy\\
&=\int_{\mathbb{R}^n}\int_{\mathbb{R}^n}\rho(x,t)\rho(y,t)\frac{|x-y|^s}{(1+|x|^{2-\nu}+|y|^{2-\nu})^\delta}\frac{(1+|x|^{2-\nu}+|y|^{2-\nu})^\delta}{|x-y|^s}dxdy \\
&\leq J(t)^{\frac{1}{p}}\cdot(\int_{\mathbb{R}^n}\int_{\mathbb{R}^n}\rho(x,t)\rho(y,t)|x-y|^{sp'}\big(1+|x|^{2-\nu}+|y|^{2-\nu})^{\delta p'}dxdy\big)^{\frac{1}{p'}}.
\end{aligned}
\end{eqnarray}
By \eqref{Equality} and inequality \eqref{FI1} in Lemma \ref{lmm:testvarphi}, we find
\begin{equation}\label{6.14}
\begin{split}
|x-y|^{sp'}(1+|x|^{2-\nu}+|y|^{2-\nu})^{\delta p'} &\leq
C_{1,1}\max((1+|x|^{\nu}), (1+|y|^{\nu})) \\
& \le C_{1,2}(1+\varphi(x)+\varphi(y)).
\end{split}
\end{equation}
\eqref{EstiamteofM} and \eqref{6.14} imply that
\begin{equation}\label{estimateofM1}
M^2\leq C_{1,3}^{\frac{1}{p'}}J(t)^{\frac{1}{p}}(M^2+2M \omega(t))^{\frac{1}{p'}}.
\end{equation}
and with \eqref{Jestimate} and \eqref{estimateofM1}, we obtain that
\begin{equation}\label{6.16}
I_2\le -C_1\frac{M^{2p}}{(M^2+2M \omega(t))^{p-1}}.
\end{equation}
Hence, we have the following inequality for $\omega(t)$ applying \eqref{estimateofM2}, the estimate of $I_1$ and \eqref{6.16}
\begin{equation}\label{EstiamteofM3}
^c_0D_t^\beta \omega(t)\leq C_0M-C_1\frac{M^{2p}}{(M^2+2M \omega(t))^{p-1}},
\end{equation}
where $C_0=\|(-\triangle)^{\frac{\alpha}{2}} \varphi\|_\infty$.

Clearly, there are $M^*>0$ and $\delta>0$ such that whenever $M>M^*$ and $\omega(0)\le \int_{\mathbb{R}^n} |x|^{\nu}\rho_0\,dx<\delta$, we have
\begin{equation}\label{6.17}
C_2:=C_0M-C_1\frac{M^{2p}}{(M^2+2M \omega(0))^{p-1}}<0.
\end{equation}
By the fundamental theorem for fractional calculus (Lemma \ref{lmm:volterra}), we find that on the interval of existence of solution to \eqref{generalized Keller-Segel}, $\omega(t)\le \omega(0)$. Consequently, with estimates \eqref{EstiamteofM3} and \eqref{6.17}, the inequality \eqref{AKS} in Corollary \ref{cor:gronwall} implies that
\[
\omega(t)<\omega(0)+C_2\frac{1}{\Gamma(\beta)}t^{\beta}.
\]
From the above inequality, one can deduce that there exists $0<T^*<\infty$ such that $\omega(T^*)=0$, which implies finite time blowup.

(iii). In this case, we have the extra assumption $\alpha+\gamma<n+2$ and
the mass $M$ can be any positive number.  For the purpose of proof, we fix $M_0>M^*$, where $M^*$ is the number in (ii).

Recall that if $\rho$ is a solution to \eqref{generalized Keller-Segel}, then so is
\[
\rho^{\lambda}=\lambda^{\alpha+\gamma-2}\rho(\lambda x,\lambda^{\frac{\alpha}{\beta}}t).
\]
These two solutions clearly have the same blow up behavior.
Choosing $\lambda^{\alpha+\gamma-2-n}=\frac{M_0}{M}$, we can then verify that the mass of $\rho^{\lambda}$ is $M_0$.

Direct computation shows that
\begin{eqnarray}
\begin{aligned}\label{M02}
\int_{\mathbb{R}^n}|x|^\nu\rho^\lambda(x,0)dx&=\int_{\mathbb{R}^n}|x|^\nu\lambda^{\alpha+\gamma-2}\rho(\lambda x,0)dx\\
&={(\frac{M_0}{M})}^{1-\frac{\nu}{\alpha+\gamma-2-n}}\int_{\mathbb{R}^n}|x|^\nu\rho_0(x)dx
\end{aligned}
\end{eqnarray}
Hence, if
\[
{(\frac{M_0}{M})}^{1-\frac{\nu}{\alpha+\gamma-2-n}}\int_{\mathbb{R}^n}|x|^\nu\rho_0(x)dx<\delta,
\]
or
 \[
 \int_{\mathbb{R}^n}|x|^\nu\rho_0(x)dx< \chi\big(\int_{\mathbb{R}^n} \rho_0(x)dx\big)^{1-\frac{\nu}{\alpha+\gamma-2-n}},
 \]
 where $\chi=\delta M_0^{-1+\frac{\nu}{\alpha+\gamma-2-n}}$, the solution blows up in finite time. By taking $M_0\to M^*$, we verify the expression of $\chi$ in the claim.
 \end{proof}

 \begin{rem}
 It is mentioned in \cite[Remark 2.6]{BK} that the conditions in (iii) indeed implies that $\|\rho_0\|_{p_c}$ is big which is in accordance with the result in Theorem \ref{thm:globalexis}.
 \end{rem}

 \section*{Acknowledgments}

The authors thank the anonymous referee for careful reading of the manuscript and insightful suggestions.
The work of J.-G Liu is partially supported by KI-Net NSF RNMS 11-07444 and NSF DMS grant No. 1514826. L. Wang is partially supported by NSFC (Grant No. 11771352, 11631007, 11571279, 11371293) and wishes to express her gratitude to Professor Zhouping Xin and Professor Changzheng Qu for their support.

\appendix

\section{ $L^p$ estimates of $\nabla Y$}\label{app:fund}

\begin{proof}[Proof of Proposition \ref{pro:fundY} Part (2)]

 Denote $\Omega_1=\{x\in \mathbb{R}^n: |x|^\alpha> t^\beta\}$ and $\Omega_2=\{x\in \mathbb{R}^n: |x|^\alpha\leq t^\beta\}$.

\begin{equation}\label{A1}
\|\nabla Y(x,t)\|_p^p=\int_{\Omega_1}|\nabla Y(x,t)|^p d x+\int_{\Omega_2}|\nabla Y(x,t)|^p d x=I+J.
\end{equation}
Let $r=|x|$.

For $I$, by Lemma \ref{lmm:asymp} Part (1), when $1<\alpha<2$,
\begin{eqnarray}
\begin{aligned}\label{A2}
I&\leq Ct^{p(2\beta-1)}\int_{|x|^\alpha\geq t^\beta}\frac{1}{|x|^{(n+\alpha+1)p}}dx\\
&=Ct^{p(2\beta-1)} \int^{\infty}_{t^{\frac{\beta}{\alpha}} }r^{-(n+\alpha+1)p+n-1} dr\\
&=Ct^{-\frac{n\beta}{\alpha}(p-1)+p(\beta-1-\frac{\beta}{\alpha})}.
\end{aligned}
\end{eqnarray}
For $\alpha=2$, we have
\begin{eqnarray}
\begin{aligned}\label{A3}
I&\le Ct^{(-\frac{(n+1)\beta }{2}+\beta-1)p}\int_{t^{\frac{\beta}{\alpha}}}^\infty \exp(-Cr^{\frac{\alpha}{\alpha-\beta}}t^{-\frac{\beta}{\alpha-\beta}})r^{n-1}dr\\
&=C t^{-\frac{n\beta}{\alpha}(p-1)+p(\beta-1-\frac{\beta}{\alpha})},
\end{aligned}
\end{eqnarray}
here we use the substitution $s=rt^{-\frac{\beta}{\alpha}}$.

For $J$, in the case $n<2\alpha-2$, we can compute directly that
\begin{eqnarray}
\begin{aligned}\label{A4}
J&\le C\int_{|x|\le t^{\frac{\beta}{\alpha}}} |t^{\beta-1-\frac{\beta(n+2)}{\alpha}}|^p|x|^p\,dx\\
&\le Ct^{(\beta-1-\frac{\beta(n+2)}{\alpha})p} \int_0^{t^{\frac{\beta}{\alpha}}}r^pr^{n-1}\,dr\\
&=C t^{-\frac{n\beta}{\alpha}(p-1)+p(\beta-1-\frac{\beta}{\alpha})}.
\end{aligned}
\end{eqnarray}

In the case $2\alpha-2<n$, we have
\begin{eqnarray}
\begin{aligned}\label{A5}
J\le &C\int_{|x|\le t^{\frac{\beta}{\alpha}}} |x|^{p(2\alpha-n-1)}t^{-p(\beta+1)}\,dx\\
&=Ct^{-p(\beta+1)}\int_0^{t^{\frac{\beta}{\alpha}}} r^{p(2\alpha-n-1)}r^{n-1}\,dr\\
&=C t^{-\frac{n\beta}{\alpha}(p-1)+p(\beta-1-\frac{\beta}{\alpha})},
\end{aligned}
\end{eqnarray}
If $n\le 2\alpha-1$, this holds for any $p\in [1,\infty)$. If $n>2\alpha-1$, we need $p<\frac{n}{n+1-2\alpha}$ or $p<\kappa_4$ for this integral to make sense.

In the case $n=2\alpha-2=2$, that is $n=\alpha=2$, we have
\begin{gather}\label{A6}
\begin{split}
J &\le C\int_{|x|\le t^{\frac{\beta}{\alpha}}} t^{-p(\beta+1)}|x|^p(1+|\log(|x|^{\alpha}t^{-\beta})|)^p \,dx \\
& =Ct^{-p(\beta+1)}\int_0^{\frac{\beta}{\alpha}}(1+|\log(r^{\alpha}t^{-\beta})|)^p r^{n-1}\,dr\\
& =Ct^{-\frac{n\beta}{\alpha}(p-1)+p(\beta-1-\frac{\beta}{\alpha})}.
\end{split}
\end{gather}
Putting the results \eqref{A2}-\eqref{A6} into \eqref{A1}, \eqref{eq:gradYLp} in Proposition \ref{pro:fundY} is verified.

By the asymptotic behavior of $\nabla Y$ in Lemma \ref{lmm:asymp}, for $x\in\Omega_2$, when $n\leq 2\alpha-1$, $\|\nabla Y\|_{\infty}<Ct^{\beta-1-(n+1)\frac{\beta}{\alpha}}$. For $x\in\Omega_1$, the bound is trivially obtained. Hence, when $n\leq 2\alpha-1$, the estimate also holds for $p=\infty=\kappa_4$.

In the case $2\alpha-1<n$ and $p=\frac{n}{n+1-2\alpha}$, according to the calculation above, we find that
\[
I^{\frac{1}{p}}
\le C t^{-\frac{n\beta}{\alpha}(1-\frac{1}{p})+\beta-1-\frac{\beta}{\alpha}}=Ct^{-\beta-1}
\]
is always true and thus $P\chi_{x\in\Omega_1}$ is in $L^p(\mathbb{R}^n)$ and satisfies the given bound. We now focus on the part on $\Omega_2$.
\[
\begin{split}
&d(\lambda)=|\{|x|\le t^{\frac{\beta}{\alpha}}: \nabla Y>\lambda \}|
\le |\{x: |x|< (Ct^{-\beta-1}\lambda^{-1})^{\frac{1}{n-2\alpha+1}}\}|\\
&\Rightarrow \lambda d^{\frac{n-2\alpha+1}{n}}
\le Ct^{-\beta-1}.
\end{split}
\]
This yields the desired result.
\end{proof}

%=============================================================================================================================================
\section*{References}

\bibliography{fractional}

\begin{thebibliography}{10}

\bibitem{ACV0}
M.~Allen, L.~Caffarelli, and A.~Vasseur.
\newblock A parabolic problem with a fractional time derivative.
\newblock {\em Arch. Ration. Mech. An.}, 221(2):603--630, 2016.

\bibitem{ACV}
M.~Allen, L.~Caffarelli, and A.~Vasseur.
\newblock Porous medium flow with both a fractional potential pressure and
  fractional time derivative.
\newblock {\em Chinese Ann. Math. B}, 38(1):45--82, 2017.

\bibitem{bellomo2015}
N.~Bellomo, A.~Bellouquid, Y.~Tao, and M.~Winkler.
\newblock Toward a mathematical theory of keller--segel models of pattern
  formation in biological tissues.
\newblock {\em Math. Mod. Meth. Appl. Sci.}, 25(09):1663--1763, 2015.

\bibitem{bellomo2017}
N.~Bellomo and M.~Winkler.
\newblock Finite-time blow-up in a degenerate chemotaxis system with flux
  limitation.
\newblock {\em Trans. Amer. Math. Soc., Series B}, 4(2):31--67, 2017.

\bibitem{bernardis2016}
A.~Bernardis, F.~J. Mart{\'\i}n-Reyes, P.~R. Stinga, and J.~Torrea.
\newblock Maximum principles, extension problem and inversion for nonlocal
  one-sided equations.
\newblock {\em J. Differ. Equations}, 260(7):6333--6362, 2016.

\bibitem{BK}
P.~Biler and G.~Karch.
\newblock Blowup of solutions to generalized {K}eller--{S}egel model.
\newblock {\em J. Evol. Equ.}, 10(2):247--262, 2010.

\bibitem{BW}
P.~Biler and G.~Wu.
\newblock Two-dimensional chemotaxis models with fractional diffusion.
\newblock {\em Math. Meth. Appl. Sci.}, 32(1):112--126, 2009.

\bibitem{BDP}
A.~Blanchet, J.~Dolbeault, and B.~Perthame.
\newblock Two-dimensional {K}eller-{S}egel model: optimal critical mass and
  qualitative properties of the solutions.
\newblock {\em Electron. J. Diff. Eqns.}, 44:1--33, 2006.

\bibitem{bouleau1986}
N.~Bouleau and F.~Hirsch.
\newblock Formes de dirichlet generales et densite des variables aleatoires
  reelles sur l'espace de wiener.
\newblock {\em J. Funct. Anal.}, 69(2):229--259, 1986.

\bibitem{brandolese2008}
L.~Brandolese and G.~Karch.
\newblock Far field asymptotics of solutions to convection equation with
  anomalous diffusion.
\newblock {\em J. Evol. Equ.}, 8(2):307--326, 2008.

\bibitem{D}
M.~Caputo.
\newblock Linear models of dissipation whose {Q} is almost frequency
  independent--{I}{I}.
\newblock {\em Geophys. J. R. astr. Soc.}, 13(5):529--539, 1967.

\bibitem{CKK}
Z.-Q. Chen, K.-H. Kim, and P.~Kim.
\newblock Fractional time stochastic partial differential equations.
\newblock {\em Stoch. Proc. Appl.}, 125(4):1470--1499, 2015.

\bibitem{DCL}
D.~del Castillo-Negrete, B.~A. Carreras, and V.~E. Lynch.
\newblock Fractional diffusion in plasma turbulence.
\newblock {\em Phys. Plasmas}, 11(8):3854--3864, 2004.

\bibitem{DK}
K.~Diethelm.
\newblock {\em The analysis of fractional differential equations: An
  application-oriented exposition using differential operators of {C}aputo
  type}.
\newblock Springer, 2010.

\bibitem{eidelman2004}
S.~D. Eidelman and A.~N. Kochubei.
\newblock Cauchy problem for fractional diffusion equations.
\newblock {\em J. Differ. Equations}, 199(2):211--255, 2004.

\bibitem{E}
C.~Escudero.
\newblock The fractional {K}eller--{S}egel model.
\newblock {\em Nonlinearity}, 19(12):2909--2918, 2006.

\bibitem{fllx17}
Y.~Feng, L.~Li, J.-G. Liu, and X.~Xu.
\newblock Continuous and discrete one dimensional autonomous fractional {ODE}s.
\newblock {\em Discrete Cont. Dyn. Syst.-B}, To appear.

\bibitem{fllx18note}
Y.~Feng, L.~Li, J.-G. Liu, and X.~Xu.
\newblock A note on one-dimensional time fractional {ODE}s.
\newblock {\em Appl. Math. Lett.}, To appear.

\bibitem{gs64}
I.~M. Gel'fand and G.~E. Shilov.
\newblock {\em Generalized functions}, volume~1.
\newblock Academic Press, 1964.

\bibitem{gly2015}
R.~Gorenflo, Y.~Luchko, and M.~Yamamoto.
\newblock Time-fractional diffusion equation in the fractional sobolev spaces.
\newblock {\em Fract. Calc. Appl. Anal.}, 18(3):799--820, 2015.

\bibitem{herrero1997}
M.~A. Herrero and J.~J. Vel{\'a}zquez.
\newblock A blow-up mechanism for a chemotaxis model.
\newblock {\em Ann. Scu. Norm. Supe. Pisa.}, 24(4):633--683, 1997.

\bibitem{horstmann2003}
D.~Horstmann et~al.
\newblock From 1970 until present: the keller-segel model in chemotaxis and its
  consequences.
\newblock 2003.

\bibitem{HL}
H.~Huang and J.-G. Liu.
\newblock Well-posedness for the {K}eller-{S}egel equation with fractional
  {L}aplacian and the theory of propagation of chaos.
\newblock {\em Kinet. Relat. Mod.}, 9(4):715--748, 2016.

\bibitem{JL}
W.~J\"ager and S.~Luckhaus.
\newblock On explosions of solutions to a system of partial differential
  equations modelling chemotaxis.
\newblock {\em Trans. Amer. Math. Soc.}, 329(2):819--824, 1992.

\bibitem{keller1970}
E.~F. Keller and L.~A. Segel.
\newblock Initiation of slime mold aggregation viewed as an instability.
\newblock {\em J. Theor. Biol.}, 26(3):399--415, 1970.

\bibitem{kemppainen2017}
J.~Kemppainen, J.~Siljander, and R.~Zacher.
\newblock Representation of solutions and large-time behavior for fully
  nonlocal diffusion equations.
\newblock {\em J. Differ. Equations}, 263(1):149--201, 2017.

\bibitem{kst06}
A.~A. Kilbas, H.~M. Srivastava, and J.~J. Trujillo.
\newblock {\em Theory and applications of fractional differential equations},
  volume 204.
\newblock Elsevier Science Limited, 2006.

\bibitem{KL}
K.-H. Kim and S.~Lim.
\newblock Asymptotic behaviors of fundamental solution and its derivatives
  related to space-time fractional differential equations.
\newblock {\em J. Korean Math. Soc.}, 53(4):929--967, 2016.

\bibitem{KS}
J.~Klafter and I.~M. Sokolov.
\newblock Anomalous diffusion spreads its wings.
\newblock {\em Phys. World}, 18(8):29--32, 2005.

\bibitem{kv14}
V.~N. Kolokoltsov and M.~A. Veretennikov.
\newblock Well-posedness and regularity of the cauchy problem for nonlinear
  fractional in time and space equations.
\newblock {\em Frac. Differ. Calc.}, 4(1):1--30, 2014.

\bibitem{LRZ}
D.~Li, J.~L. Rodrigo, and X.~Zhang.
\newblock Exploding solutions for a nonlocal quadratic evolution problem.
\newblock {\em Rev. Mat. Iberoamericana}, 26(1):295--332, 2010.

\bibitem{liliu17}
L.~Li and J.-G. Liu.
\newblock A generalized definition of {C}aputo derivatives and its application
  to fractional {ODE}s.
\newblock {\em arXiv preprint arXiv:1612.05103}, 2017.

\bibitem{llljg17}
L.~Li and J.-G. Liu.
\newblock Some compactness criteria for weak solutions of time fractional
  {PDE}s.
\newblock {\em arXiv preprint arXiv:1708.08384}, 2017.

\bibitem{liliu17discrete}
L.~Li and J.-G. Liu.
\newblock A note on deconvolution with completely monotone sequences and
  discrete fractional calculus.
\newblock {\em Q. Appl. Math.}, 78:189--198, 2018.

\bibitem{lll17}
L.~Li, J.-G. Liu, and J.~Lu.
\newblock Fractional stochastic differential equations satisfying
  fluctuation-dissipation theorem.
\newblock {\em J. Stat. Phys.}, 169(2):316--339, 2017.

\bibitem{MS}
M.~M. Meerschaert and H.-P. Scheffler.
\newblock Stochastic model for ultraslow diffusion.
\newblock {\em Stoch. Proc. Appl.}, 116(9):1215--1235, 2006.

\bibitem{nagai2001}
T.~Nagai.
\newblock Blowup of nonradial solutions to parabolic--elliptic systems modeling
  chemotaxis in two-dimensional domains.
\newblock {\em J. Inequal. Appl.}, 2001(1):970292, 2001.

\bibitem{N}
E.~Nane.
\newblock Fractional cauchy problems on bounded domains: survey of recent
  results.
\newblock In {\em Fractional Dynamics and Control}, pages 185--198. Springer,
  2012.

\bibitem{rudin2017}
W.~Rudin.
\newblock {\em Fourier analysis on groups}.
\newblock Courier Dover Publications, 2017.

\bibitem{skm93}
S.~G. Samko, A.~A. Kilbas, and O.~I. Marichev.
\newblock {\em Fractional integrals and derivatives: Theory and Applications}.
\newblock Gordon and Breach, Yverdon Publishers, 1993.

\bibitem{SBM}
R.~Schumer, D.~A. Benson, M.~M. Meerschaert, and S.~W. Wheatcraft.
\newblock Eulerian derivation of the fractional advection--dispersion equation.
\newblock {\em J. Contaminant Hydrol.}, 48(1):69--88, 2001.

\bibitem{Stein}
E.~M. Stein.
\newblock {\em Singular integrals and differentiability properties of
  functions}, volume~30.
\newblock Princeton university press, 2016.

\bibitem{taowinkler17}
Y.~Tao and M.~Winkler.
\newblock Critical mass for infinite-time aggregation in a chemotaxis model
  with indirect signal production, to appear in.
\newblock {\em J. Eur. Math. Soc}.

\bibitem{Taylor}
M.~E. Taylor.
\newblock Remarks on fractional diffusion equations.
\newblock {\em Chapter 6 of Lecture notes "Diffusion processes and other random
  processes"}.

\bibitem{Taylor1}
M.~E. Taylor.
\newblock {\em Partial Differential Equations, I-III}.
\newblock Springer-Verlag, New York, 1996.

\bibitem{V2}
J.~L. V{\'a}zquez.
\newblock Recent progress in the theory of nonlinear diffusion with fractional
  laplacian operators.
\newblock {\em Discrete Contin. Dyn. Syst. Series S}, 7(4), 2014.

\bibitem{winkler2013}
M.~Winkler.
\newblock Finite-time blow-up in the higher-dimensional parabolic--parabolic
  keller--segel system.
\newblock {\em J. Math. Pures Appl.}, 100(5):748--767, 2013.

\bibitem{W}
L.~V. Wolfersdorf.
\newblock On identification of memory kernels in linear theory of heat
  conduction.
\newblock {\em Math. Method. Appl. Sci.}, 17(12):919--932, 1994.

\bibitem{RZ}
R.~Zacher.
\newblock A {D}e {G}iorgi--{N}ash type theorem for time fractional diffusion
  equations.
\newblock {\em Math. Ann.}, 356(1):99--146, 2013.

\end{thebibliography}
\bibliographystyle{abbrv}

\end{document}